\numberwithin{equation}{section}
\newtheorem{theorem}{Theorem}[section]
\newtheorem{lemma}[theorem]{Lemma}
\newtheorem{definition}[theorem]{Definition}
\newtheorem{proposition}[theorem]{Proposition}
\theoremstyle{remark}
\newtheorem{remark}[theorem]{Remark}
\title[Analysis of attainment of boundary conditions]{Analysis of the attainment of boundary conditions for a nonlocal diffusive Hamilton-Jacobi equation}
\author{Alexander Quaas, Andrei Rodr\'iguez}
\begin{document}

\begin{abstract}
	We study whether the solutions of a parabolic equation with diffusion given by the fractional Laplacian and a dominating gradient term satisfy Dirichlet boundary data in the classical sense or in the generalized sense of viscosity solutions. The Dirichlet problem is well posed globally in time when boundary data is assumed to be satisfied in the latter sense. Thus, our main results are \emph{a)} the existence of solutions which satisfy the boundary data in the classical sense for a small time, for all H\"older-continuous initial data, with H\"older exponent above a critical a value, and \emph{b)} the nonexistence of solutions satisfying the boundary data in the classical sense for all time. In this case, the phenomenon of loss of boundary conditions occurs in finite time, depending on a largeness condition on the initial data.
\end{abstract}

\maketitle

\noindent \textbf{Keywords:} Fractional Laplacian, nonlinear parabolic equations, viscosity solutions, loss of boundary conditions, viscous Hamilton-Jacobi equation, principal eigenvalue.\\

\noindent \textbf{MSC (2010):} 35K55, 35R09, 35D40, 35B30, 35P99.

\section{Introduction}\label{sec:intro}

The present work is a contribution to the study of the qualitative properties of a nonlinear parabolic equation involving nonlocal diffusion. Specifically, we study the occurrence of \emph{loss of boundary conditions} (LOBC, for short; see Sec.~\ref{notionSolnSec} for a precise definition) for the following problem:
	\begin{align}
		u_t  + (-\Delta)^s u = |Du|^p &\quad\textrm{in } \Omega\times(0,T), \label{mainEq}\\
		u = 0 &\quad\textrm{in } \mathbb{R}^N\backslash\Omega\times(0,T),\label{boundarydata}\\
		u(x,0) = u_0(x) &\quad\textrm{in } \overline{\Omega} \label{initialdata}.
	\end{align}
Here, $\Omega\subset\mathbb{R}^N$ is a bounded domain with $C^2$ boundary, $T>0$, and $(-\Delta)^s$ denotes the well-known fractional Laplacian operator, defined as
	\begin{equation}\label{fracDef}
		(-\Delta)^s u(x,t) = C_{N,s}\, P.V.\!\int_{\mathbb{R}^N} \frac{u(x,t) - u(y,t)}{|x-y|^{N+2s}} \,dy,
	\end{equation}
where $C_{N,s}$ is a normalization constant. See \cite{di2012hitchhikers} for details. In addition to $s\in(0,1)$, we impose the following restrictions on $s$ and $p$,
	\begin{equation}\label{pands}
		s+1<p<\frac{s}{1-s}.
	\end{equation}
In particular, these imply that $s\in (0.618\ldots, 1)$, where $0.618\ldots$ is the constant sometimes called \emph{reciprocal golden ratio}. We provide further comment on these restrictions below, after the statement of our main theorems. Moreover, we assume $u_0\geq 0$,
	\begin{equation}\label{u0reg}
		u_0\in C^\beta(\overline{\Omega}), \quad\textrm{where } \beta>\beta^* = \frac{p-2s}{p-1},
	\end{equation}
and the compatibility condition
	\begin{equation}\label{bottomComp}
		u_0(x) = 0 \quad\textrm{for all } \partial{\Omega}.
	\end{equation}
As with \eqref{pands}, \eqref{u0reg} might not be optimal, and is explained in the context of our Theorem \ref{localExtThm} (see below).

Equation \eqref{mainEq} can be seen as a generalization of the so-called viscous Hamilton-Jacobi equation,
	\begin{equation}\label{vhj}
		u_t - \Delta u = |Du|^p 	\textrm{ in } \Omega\times(0,T).
	\end{equation}
For $p=2$, this corresponds to the deterministic Kardar-Parisi-Zhang equation, proposed by these authors in \cite{kardar1986dynamic} as a model for the profile of a growing interface. Due to its mathematical relevance as a simple model for an equation with nonlinear dependence on the gradient of the solution, \eqref{vhj} has been studied from numerous points of view and with different qualitative results in mind: existence and uniqueness of classical solutions (\cite{friedman2013partial}); existence and nonexistence of global, classical solutions and gradient blow-up and related phenomena (\cite{souplet2002gradient}, \cite{souplet2006global}, \cite{yuxiang2010single}, \cite{porretta2016profile}; see also \cite{quittner2007superlinear} and the references therein for a broader context); global existence of viscosity solutions, assuming boundary conditions in the viscosity sense (\cite{barles2004generalized}); and, closest to our work, regarding LOBC, \cite{porretta2017analysis}.

Some of the previous results have been extended to more general equations, still in the second-order setting: e.g., to degenerate equations in \cite{attouchi2012well}, \cite{attouchi2017single}; and, by the authors, to fully nonlinear, uniformly parabolic equations in \cite{quaas2018loss}, which the present work closely parallels. 

Under the structural assumptions of nondegeneracy of the diffusion and coercivity of the first order term, which are easily shown to be satisfied by \eqref{mainEq} (see Remark \ref{signChange}), and the compatibility condition \eqref{bottomComp}, together with the notion of boundary conditions \emph{in the viscosity sense}, the existence of a unique solution of \eqref{mainEq}-\eqref{boundarydata}-\eqref{initialdata} defined globally in time, $u\in C(\overline{\Omega}\times[0,\infty))\cap L^\infty(\Omega\times(0,T))$ for all $T>0$, is shown in \cite{barles2016existence}, Theorem 7.1. This result is proved by means of a comparison result (\cite{barles2016existence}, Theorem 3.2), and a subsequent application of Perron's method. See Sec.~\ref{notionSolnSec} for a precise definition of the notion of solution employed and further remarks on the application of the results of \cite{barles2016existence} to our problem. 

We remark that there exist certain results concerning the regularity of solutions for problems related to \eqref{mainEq}-\eqref{boundarydata}-\eqref{initialdata} (see, e.g., \cite{barles2015regularity}, \cite{barles2017lipschitz}). However, even if they were adapted to our setting, they do not provide the regularity needed for the proof of Theorem \ref{nonExtThm}. For this reason we resort to a regularization procedure. See the remarks after the statement of Theorem \ref{nonExtThm}.

\subsection{Main results}

The first of our results concerns local existence, i.e., the existence of solutions which, for small time, satisfiy \eqref{boundarydata} in the classical sense.

	\begin{theorem}\label{localExtThm}
		Assume \eqref{pands} and \eqref{u0reg}. Then, there exists a $T^*>0$, depending only on $N, \Omega, s, p$ and $[u_0]_{C^\beta(\overline{\Omega})}$, such that the viscosity solution of \eqref{mainEq} satisfies \eqref{boundarydata} and \eqref{initialdata} in the classical sense for all $0\leq t \leq T^*$.
	\end{theorem}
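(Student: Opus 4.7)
The plan is a comparison argument against an explicit classical supersolution that vanishes on $\partial\Omega$. Since the source $|Du|^p \geq 0$ and $u_0 \geq 0$, the function identically zero is a subsolution, so the comparison principle of \cite{barles2016existence} gives $u \geq 0$. It therefore suffices to produce $U \in C(\overline{\Omega} \times [0, T^*]) \cap C^2(\Omega \times (0, T^*))$ satisfying $U = 0$ on $(\mathbb{R}^N \setminus \Omega) \times [0, T^*]$, $U(\cdot, 0) \geq u_0$ on $\overline{\Omega}$, and the classical supersolution inequality $U_t + (-\Delta)^s U \geq |DU|^p$ in $\Omega \times (0, T^*)$. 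Comparison then yields $0 \leq u \leq U$, and since $U$ vanishes classically on $\partial\Omega$, so must $u$.

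\textbf{The barrier.} My candidate is $U(x,t) = M\,\psi_\beta(x) + A\,t$, where $\psi_\beta \in C^2(\Omega) \cap C(\overline{\Omega})$ is positive in $\Omega$, vanishes on $\partial\Omega$, and satisfies $\psi_\beta(x) \asymp d(x)^\beta$ and $|D\psi_\beta(x)| \asymp d(x)^{\beta-1}$ near $\partial\Omega$, where $d(x)=\mathrm{dist}(x,\partial\Omega)$; one obtains $\psi_\beta$ by smoothing $d^\beta$ in a tubular neighborhood of $\partial\Omega$ and gluing to a smooth positive bump in the interior. The H\"older regularity \eqref{u0reg} together with the compatibility \eqref{bottomComp} gives $u_0(x) \leq [u_0]_{C^\beta(\overline{\Omega})}\, d(x)^\beta$ near $\partial\Omega$, which together with $\|u_0\|_\infty < \infty$ lets us fix $M$ large (depending only on $[u_0]_{C^\beta(\overline{\Omega})}$ and $\Omega$) so that $U(\cdot,0) \geq u_0$ on $\overline{\Omega}$. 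Using the boundary asymptotics
\[
	(-\Delta)^s\psi_\beta(x) \sim c(\beta,s)\, d(x)^{\beta-2s}, \qquad |D\psi_\beta(x)|^p \sim C\, d(x)^{(\beta-1)p},
\]
the decisive algebraic identity is
\[
	(\beta-1)p - (\beta-2s) = (p-1)(\beta - \beta^*) > 0,
\]
which is exactly the assumption $\beta > \beta^*$ and says that the gradient term vanishes at $\partial\Omega$ strictly faster than the nonlocal diffusion. Hence on a strip $\{d(x) < \delta_0\}$ the fractional Laplacian controls $|DU|^p$; outside the strip, $\psi_\beta$ is bounded away from $0$ with bounded derivatives, and enlarging $A$ is enough to close the supersolution inequality globally in $\Omega$.

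\textbf{Main obstacle.} The principal technical tension is the competing $M$-scalings: the nonlocal diffusion in $U$ scales linearly in $M$ but the gradient nonlinearity scales like $M^p$, so the strip width $\delta_0 = \delta_0(M)$ on which the boundary inequality closes shrinks as $M$ grows. Since $M$ must be taken large to dominate $u_0$, this forces $T^*$ to be small, with the dependence tracked explicitly through the constants produced by the barrier computation, yielding $T^* = T^*(N,\Omega,s,p,[u_0]_{C^\beta(\overline{\Omega})})$. A further subtlety is that the constant $c(\beta,s)$ in the asymptotic expansion need not have a favorable sign when $\beta$ is too close to $s$ (which is possible since $\beta^* < s$ under \eqref{pands}); I would handle this by refining the ansatz with an auxiliary correction of the form $K\,d(x)^{\beta'}$ for some $\beta' \in (s, 2s)$, chosen so as to restore a positive leading-order fractional Laplacian without destroying the initial domination by $u_0$. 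Once these two adjustments are in place, the comparison argument sketched above delivers the theorem.
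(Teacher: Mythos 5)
Your overall plan — dominate $u$ by an explicit supersolution that vanishes on $\partial\Omega$ — is the same as the paper's, but two steps in your construction fail as written.

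First, the ansatz $U(x,t)=M\psi_\beta(x)+At$ does not vanish on $\partial\Omega$ for $t>0$: you get $U=At$ there, so the comparison $u\leq U$ yields only $u(x,t)\leq At$ on $\partial\Omega$, not $u=0$. The $At$ term cannot be removed either, since without it you would have a time-independent supersolution on all of $\Omega$ vanishing on $\partial\Omega$, which would force $u=0$ on $\partial\Omega$ for \emph{all} time and contradict Theorem \ref{nonExtThm} when $u_0$ is large. The paper avoids this by building a \emph{time-independent} barrier $v_y(x)=\lambda M|x-y|^\alpha+\mu M d(x)^\alpha$ (and $v=\inf_{y\in\partial\Omega}v_y$) that is a supersolution only in a boundary strip $\Omega_\delta$; the comparison is then performed over $\Omega_\delta\times(0,T^*)$, and the side $\{d=\delta\}$ of that domain is handled by choosing $T^*$ small enough that $u(\cdot,t)$ remains within $\epsilon$ of $u_0$ (uniform continuity at $t=0$), so that $v$ strictly dominates $u$ there. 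That is where $T^*$ actually comes from; the way your $A$ "closes the inequality in the interior" does not deliver a useful bound at $\partial\Omega$.

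Second, the exponent choice is wrong, and your proposed correction does not repair it. With $\psi_\beta\sim d^\beta$ you only get $(-\Delta)^s\psi_\beta\geq c\,d^{\beta-2s}$ with $c>0$ when $\beta<s$; for $\beta\geq s$ the sign flips, and your barrier computation collapses. You correctly flag this, but the fix $Kd^{\beta'}$ with $\beta'\in(s,2s)$ has the \emph{same} wrong sign ($(-\Delta)^s d^{\beta'}<0$ near the boundary when $\beta'>s$) and is subleading anyway, so it cannot restore positivity. The actual fix — used in the paper via Lemma \ref{fracDistEstimate} — is to build the barrier with an intermediate exponent $\alpha\in(\beta^*,\min\{\beta,s\})$: $\alpha<\beta$ keeps the domination $u_0\leq[u_0]_\beta d^\beta\leq[u_0]_\beta d^\alpha$ near $\partial\Omega$, $\alpha>\beta^*$ yields your algebraic identity $(\alpha-1)p-(\alpha-2s)=(p-1)(\alpha-\beta^*)>0$, and $\alpha<s$ is what makes the leading fractional-Laplacian coefficient positive. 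The restriction $p<\frac{s}{1-s}$ in \eqref{pands} is exactly what guarantees $\beta^*<s$, so that such an $\alpha$ exists. Without it, there is no room between $\beta^*$ and $s$.
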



Due to the results of \cite{barles2016existence}, it suffices to show that the globally defined viscosity solution of \eqref{mainEq} satisfies \eqref{boundarydata} in the classical sense (see Sec.~\ref{notionSolnSec}). This is accomplished by a barrier argument, i.e., the construction of a supersolution of \eqref{mainEq}-\eqref{boundarydata}-\eqref{initialdata} in a neighborhood of $\partial\Omega$. It is here that the restriction \eqref{pands} comes into play. Consider $s\in (0,1)$ fixed. The upper bound $p<\frac{s}{1-s}$ implies for the critical exponent in \eqref{u0reg} that $\beta^*<s$, while the construction of the barrier ultimately relies on computing
	\begin{equation*}
		F(\beta) = \int_{\mathbb{R}} \frac{(1+\beta)_+ + (1-\beta)_+ - 2}{|t|^{1+2\beta}} \,dt
	\end{equation*}
for $\beta\in(0,2s)$; more precisely, on the fact that $F(\beta)<0$ for each $\beta\in (0,s)$ (see \cite{davila2016continuous}, \cite{ros2014dirichlet} for details).

We note that neither the corresponding local existence result for \eqref{vhj} in \cite{friedman2013partial} nor its extension to the fully nonlinear case in \cite{quaas2018loss} require an upper bound for $p$ (or, more generally, for the rate of growth of the gradient nonlinearity). In this sense our result might not be optimal. It is \emph{stable}, however, in the sense that the restriction disappears as we approach the second-order case, since $\frac{s}{1-s}\to\infty$ as $s\to1^-$.

The statement of our second result, concerning LOBC, involves the principal eigenfunction of the fractional Laplacian. We denote by $(\lambda_1, \varphi_1)$ the solution pair for
	\begin{equation}\label{fracLapEigen0}
		\left\{\begin{array}{ll}
			(-\Delta)^s\varphi_1 = \lambda_1\varphi_1& \textrm{in } \Omega,\\
			\varphi_1 = 0 & \textrm{in } \mathbb{R}^N\backslash\Omega,
		\end{array}\right.
	\end{equation}
where the solution is normalized so that $\|\varphi_1\|_\infty = 1$. See Sec.~\ref{techSec} for details.
	\begin{theorem}\label{nonExtThm}
		Assume \eqref{pands} and \eqref{u0reg}, and let $T>0$. Then, there exists $M>0$, depending only on $N, \Omega, s, p$, and $T$, such that, if
		\begin{equation}
			\int_\Omega u_0(x)\varphi_1(x) \,dx > M,
		\end{equation}
		then the viscosity solution of \eqref{mainEq}-\eqref{boundarydata}-\eqref{initialdata} has LOBC at some finite time prior to $T$.
	\end{theorem}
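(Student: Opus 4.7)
My plan is an argument by contradiction. Suppose the viscosity solution $u$ of \eqref{mainEq}--\eqref{boundarydata}--\eqref{initialdata} satisfies $u(\cdot,t)=0$ pointwise on $\partial\Omega$ for every $t\in[0,T]$. Since by \cite{barles2016existence} we have $u\in L^\infty(\Omega\times(0,T))$, the weighted mass
\[
J(t) := \int_\Omega u(x,t)\,\varphi_1(x)\,dx
\]
is bounded on $[0,T]$ by $\|\varphi_1\|_{L^1}\,\|u\|_{L^\infty}$. The goal is to show that, for $J(0)=\int_\Omega u_0\varphi_1\,dx$ sufficiently large, $J$ must blow up before time $T$, yielding a contradiction. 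All the computations below would first be carried out on smooth solutions of a suitably regularized problem (the step the authors announce in the paragraph preceding the statement) and then passed to the limit. The self-adjointness of $(-\Delta)^s$, the eigenvalue equation $(-\Delta)^s\varphi_1=\lambda_1\varphi_1$, and the fact that both $u$ (pointwise continuous across $\partial\Omega$ by the contradiction hypothesis) and $\varphi_1$ vanish outside $\Omega$ give
\[
\int_\Omega \bigl((-\Delta)^s u\bigr)\,\varphi_1\,dx \;=\; \lambda_1 J(t),
\]
so multiplying \eqref{mainEq} by $\varphi_1$ and integrating over $\Omega$ produces the identity
\[
J'(t)+\lambda_1 J(t) \;=\; \int_\Omega |Du(x,t)|^p\,\varphi_1(x)\,dx.
\]

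The central ingredient, which I expect to carry most of the technical weight, is a lower bound of the form $\int_\Omega|Du|^p\varphi_1\,dx \geq c_*\,J^p$ for some positive constant $c_*=c_*(N,\Omega,s,p)$; this is where the restriction $p>s+1$ from \eqref{pands} plays a decisive role. I would introduce an auxiliary function $\phi\in C^{1,\alpha}(\overline{\Omega})$ solving the local Poisson problem $-\Delta\phi=\varphi_1$ in $\Omega$ with $\phi=0$ on $\partial\Omega$, so that $\phi>0$ in $\Omega$ and $|\nabla\phi|\in L^\infty$ by elementary elliptic theory. For $c>0$, set $\mathbf{V}:=c\,\nabla\phi/\varphi_1$. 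The pointwise Young inequality
\[
|Du|^p\;\geq\;p\,(\mathbf{V}\cdot Du)-(p-1)|\mathbf{V}|^{p'},\qquad p'=p/(p-1),
\]
multiplied by $\varphi_1$ and integrated, combined with the classical integration by parts
\[
\int_\Omega (\mathbf{V}\cdot Du)\,\varphi_1\,dx \;=\; -c\int_\Omega u\,\Delta\phi\,dx \;=\; c\,J(t)
\]
(whose boundary contribution vanishes under the classical-boundary hypothesis), yields
\[
\int_\Omega|Du|^p\,\varphi_1\,dx \;\geq\; pc\,J(t)\;-\;(p-1)\,c^{p'}\,C_1,\qquad C_1 := \int_\Omega\frac{|\nabla\phi|^{p'}}{\varphi_1^{p'-1}}\,dx.
\]
The finiteness of $C_1$ is exactly the point at which the range of $p$ enters: by \cite{ros2014dirichlet}, $\varphi_1(x)\asymp \mathrm{dist}(x,\partial\Omega)^s$ near $\partial\Omega$, so the integrand is bounded by a constant times $\mathrm{dist}(x,\partial\Omega)^{-s/(p-1)}$, which is integrable near the boundary precisely when $s/(p-1)<1$, i.e., $p>s+1$. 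Optimizing the previous display over $c>0$ (optimum attained at $c=(J/C_1)^{p-1}$) produces the desired bound with $c_*=C_1^{-(p-1)}$.

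Combining the two pieces, $J$ satisfies the autonomous differential inequality $J'(t)\geq -\lambda_1 J(t)+c_*\,J(t)^p$. Since $p>1$, solutions starting above the equilibrium $(\lambda_1/c_*)^{1/(p-1)}$ blow up in finite time, and the blow-up time tends to $0$ as $J(0)\to\infty$; in particular one can select $M=M(N,\Omega,s,p,T)$ such that $J(0)>M$ forces blow-up strictly before $T$. This contradicts the uniform bound on $J$ over $[0,T]$, so the classical boundary condition must fail at some $t\leq T$, which is LOBC. The hard part is not the algebra above but justifying it: the computation of $J'$, the self-adjointness identity for $(-\Delta)^s$, and the integration by parts all require regularity beyond what is available for viscosity solutions of \eqref{mainEq}, which is exactly why the authors resort to a regularization procedure; verifying that the approximate identities pass to the limit while preserving the inequalities, and that the regularization respects the classical-boundary assumption along the approximation, is in my view the core technical content of the proof.
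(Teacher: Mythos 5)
Your high-level architecture coincides with the paper's: argue by contradiction, define a weighted mass $J(t)=\int u(\cdot,t)\varphi_1$, derive $J'\ge -\lambda_1 J + c_* J^p$ from the equation, and conclude blow-up, which is impossible because $J$ is bounded on $[0,T]$. You also correctly identify that the real work is the regularization needed to make the formal computations rigorous --- the paper does this with inf-sup-convolutions (Proposition \ref{secAppIneq}), working on shrunken domains $\Omega^\eta$ with the corresponding eigenfunctions $\varphi_1^\eta$, and the nonlocal ``integration by parts'' for the term $\int(-\Delta)^s w\,\varphi_1^\eta$ is done by a Fubini/symmetrization argument with an error term of favorable sign.

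Where you genuinely diverge is in the key step of converting the gradient term into a superlinear power of $J$. The paper proceeds via H\"older's inequality applied to $\int_{\Omega^\eta}|Dw|\,dx$ against the weight $\varphi_1^\eta$, followed by a Poincar\'e-type inequality $\int_{\Omega^\eta}|w|\le C(\sup_{\partial\Omega^\eta}|w|+\int_{\Omega^\eta}|Dw|)$, with the boundary supremum controlled by the classical-boundary hypothesis; the integrability ingredient is Lemma \ref{finiteIntLemma}, i.e., $\int_{\Omega^\eta}(\varphi_1^\eta)^{-1/(p-1)}<\infty$. You instead introduce an auxiliary local Poisson solve $-\Delta\phi=\varphi_1$, set $\mathbf{V}=c\nabla\phi/\varphi_1$, and use the pointwise reverse Young inequality together with one local integration by parts (whose boundary term vanishes under the classical-boundary hypothesis) to obtain $\int|Du|^p\varphi_1\ge pcJ-(p-1)c^{p'}C_1$, then optimize in $c$. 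The finiteness of your $C_1=\int|\nabla\phi|^{p'}\varphi_1^{-(p'-1)}$ is, after observing $p'-1=1/(p-1)$ and $|\nabla\phi|\in L^\infty$ (Schauder, Hopf for $\phi$), literally the same integrability condition as in Lemma \ref{finiteIntLemma}, so $p>s+1$ enters at the identical place. The trade-off is instructive: your Young/duality route produces the $J^p$ lower bound in a single step and avoids Poincar\'e, but it introduces a second auxiliary function $\phi$ that would also have to be carried along the $\eta$-regularization (you would need $\phi^\eta$ on $\Omega^\eta$ with uniform $C^{1}$ bounds); the paper's H\"older/Poincar\'e route keeps all estimates in terms of $\varphi_1^\eta$ alone, at the cost of the extra boundary remainder $\sup_{\partial\Omega^\eta}|w|$ which must be shown to be $o(1)$. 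Both are valid, and you correctly flagged that the difficulty is not the algebra but justifying it; to make your version airtight you would reproduce Proposition \ref{secAppIneq} verbatim and then show that your integration by parts $\int_{\Omega^\eta}\nabla\phi^\eta\cdot Dw=-\int_{\Omega^\eta}w\,\Delta\phi^\eta+\int_{\partial\Omega^\eta}w\,\partial_\nu\phi^\eta$ has boundary term $o(1)$ as $\eta\to0$ --- the same use of the contradiction hypothesis as in \eqref{supWonDeta}.

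One small imprecision to fix: you write that the computations ``would first be carried out on smooth solutions of a suitably regularized problem.'' The paper does not regularize the equation; it regularizes the solution into an almost-everywhere supersolution $w=(\tilde u_{\epsilon+\delta,\kappa})^\delta$ of the same inequality. This matters because you then only have a one-sided pointwise inequality rather than an equality, which is exactly enough for the differential inequality on $J$ but would not support, e.g., energy identities.
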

The proof of Theorem \ref{nonExtThm} uses a key argument from Theorem 2.1 in \cite{souplet2002gradient}, sometimes referred to as the ``principal eigenfunction method.'' In adapting this argument to the current setting, the main difficulty is the lack of regularity of solutions. More precisely, we would need solutions to satisfy the equation either in the weak sense, or in some pointwise sense; in the latter case, all terms in the equation must be summable. As mentioned earlier, the existing theory for our problem does not provide such regularity. We note that even in the case of \eqref{vhj} (for which LOBC is obtained in \cite{porretta2017analysis}, among other results), where viscosity solutions are shown to be smooth, some approximation procedure is necessary to ``integrate'' the equation.

We remedy this problem by using regularization by $\inf$-$\sup$-convolutions, a method introduced in \cite{lasry1986remark}. Afterwards, we require that various estimates related to \eqref{fracLapEigen0} remain uniform with respect to the regularization parameters. In particular, we obtain the stability of solutions to \eqref{fracLapEigen0} with respect to the varying domain (see Subsec.~\ref{subsec:stability}). For this part we also rely on fundamental estimates for the Dirichlet problem for the fractional Laplacian from \cite{ros2014dirichlet}.
	
The part of \eqref{pands} that is relevant to Theorem \ref{nonExtThm} is $s+1<p$. This assumption appears only in the crucial Lemma \ref{finiteIntLemma}, which states that a certain negative power (given by $p$ and $s$) of the principal eigenfunction on an approximate domain is summable. The restriction \eqref{pands} is related to our method of proof. However, a lower bound for $p$ in terms of $s$ is necessary for LOBC to occurr: it is known that solutions of \eqref{mainEq} satisfying \eqref{boundarydata} in the classical sense for all $T>0$ exist for $s\in [0, 1]$ and $p\leq 2s$ (see \cite{barles2008dirichlet}, Theorem 4, for the nonlocal case and \cite{barles2004generalized} for the local case, i.e.,  $s=1$). A natural question which we leave open is whether there is classical solvability or if LOBC occurs when $s\in [0, 1)$ and $2s<p\leq s+1.$

For simplicity, we have restricted our analysis to the case of homogeneous boundary conditions, as in \eqref{boundarydata}. Local existence for more general boundary conditions can be obtained in the same way as in Theorem \ref{localExtThm}, following the construction of \cite{davila2016continuous}. Theorem \ref{nonExtThm} applies to the case of general boundary conditions with practically no modification (see Remark \ref{nonExtRmk1}). 

The methods of Theorem \ref{nonExtThm} apply to more general nonlinear operators as well, provided they satisfy the nonlocal equivalent of having divergence form (see \cite{silvestre2006holder}, Sec.~3.6). This restriction is due to the essential use of ``integration by parts'' in the so-called principal eigenfunction method. For instance, the result can be extended to an equation with diffusion given by the so-called $p$-fractional Laplacian, defined for $s\in (0,1)$, $p>1$ and $x\in\mathbb{R}^N$ as
	\begin{equation}\label{pLaplace}
		(-\Delta_p)^s u(x) = C(N,s,p)\, P.V.\int_{\mathbb{R}^N} \frac{|u(x)-u(y)|^{p-2}(u(x)-u(y))}{|x-y|^{N+sp}}\,dy.
	\end{equation}
In this case the technical results of Sec.~\ref{techSec} can be reproduced following \cite{iannizzotto2016global} and \cite{del2017hopf}. 

\subsection{Organization of the article}
In Sec.~{\ref{notionSolnSec}} we recall the notion of solution from \cite{barles2016existence}, which is used throughout our work, and provide some remarks on the relevant results contained in that work. Sec.~{\ref{localExtSec} is devoted to the proof of Theorem \ref{localExtThm}. In Sec.~\ref{techSec} we provide the technical results required for the proof of Theorem \ref{nonExtThm}, which is proved in Sec.~\ref{nonExtSec}.

\subsection{Notation}

We write $d:\mathbb{\mathbb{R}^N}$, $d=d(x)$ for the \emph{distance to the boundary} of the set $\Omega$, extended by zero to the whole of $\mathbb{R}^N$, i.e.,
	\begin{equation}\label{defDist}
		d(x) = \left\{
			\begin{array}{ll}
				\mathrm{dist}(x,\partial\Omega) & \textrm{if } x\in\overline{\Omega},\\
				0 & \textrm{if } x\in\mathbb{R}^N\backslash\overline{\Omega}.
			\end{array}
			\right.
	\end{equation}
	
For $\delta>0$, we write $\Omega_\delta = \{x\in\Omega : d(x)<\delta \},$ where $d=d(x)$ is defined as above. Similarly, $\Omega^\delta = \{x\in\Omega : d(x)>\delta \}.$ However, to avoid confusion, we abstain from using both notations in the same section: e.g., in Sec.~\ref{localExtSec} we use the notation $\Omega_\delta$, and from Sec.~\ref{techSec} onwards we use only $\Omega^\delta$. The closure and boundary operation on sets is performed ``after'' specifying a subset in terms of the distance: e.g., $\overline{\Omega}_\delta = \{x\in\Omega : d(x)\leq\delta \}.$

In Sec.~\ref{techSec} and Appendix \ref{CsReg} we write, for $\eta>0$, $d_\eta=d_\eta(x)$ for the distance to the boundary of $\Omega^\eta$ (extended by zero outside this set, as in \eqref{defDist}).

Nonnegative constants whose precise value does not affect the argument are denoted collectively by $C$, and the value of $C$ may change from line to line. When convenient, dependence of $C$ on certain parameters is indicated in parentheses, e.g., as in \eqref{pLaplace}. Dependency on $\Omega, N, s,$ and $p$ is sometimes omitted for simplicity. Constants we wish to keep track of are numbered accordingly ($c_1, c_2,\ldots, C_0, C_1$, etc.)

\section{Notion of solution}\label{notionSolnSec}
	
	We recall the notion of solution for \eqref{mainEq} given in \cite{barles2016existence}: let $\delta>0$, $\phi\in C^2(B_\delta(x)\times (t-\delta, t + \delta))$ and $v\in \mathbb{R^N}\times\mathbb{R}\to\mathbb{R}$ a bounded measurable function. Define
		\begin{equation}\label{eval}
			E_\delta(v,\phi,x,t) = \phi_t(x,t) + I[B_\delta(x)](\phi(\cdot,t))	+ I[\mathbb{R}^N\backslash B_\delta(x)](v(\cdot,t)) - |D\phi(x,t)|^p,
		\end{equation}
	where, for a measurable $A\subset\mathbb{R}^N$ and a bounded, measurable function $\psi$,
		\begin{equation*}
			I[A](\psi(\cdot,t))	= C_{N,s}\, P.V.\!\int_A \frac{\psi(x,t) - \psi(y,t)}{|x-y|^s} \,dy.
		\end{equation*}
	
	We momentarily consider a non-homogeneous boundary condition 
	\begin{equation}\label{genBoundaryData}
		u(x,t)= g(x) \quad\textrm{for all } (x,t)\in \mathbb{R}^N\backslash\overline{\Omega} \times [0,T],
	\end{equation}
	with $g\in C_b(\mathbb{R}^N\backslash\Omega)$, and define the upper (resp., lower) $g$-extension of $u\in USC(\overline{\Omega}\times [0,T])$ (resp., $v\in LSC(\overline{\Omega}\times [0,T])$) as the function defined in $\mathbb{R}^N\times\mathbb{R}$ as
		\begin{equation*}
			\begin{array}{c}
				u^g(x,t)\\
			\end{array} 
			= \left\{\begin{array}{ll}
					u(x,t) & \textrm{if } (x,t)\in\Omega \times [0,T],\\
					g(x,t) & \textrm{if } (x,t)\in\mathbb{R}^N\backslash\overline{\Omega} \times [0,T],\\
					\max \{u(x,t), g(x,t)\} & \textrm{if } (x,t)\in\partial\Omega\times [0,T]
			\end{array}
			\right. 
		\end{equation*}
	(resp.,
		\begin{equation}\label{upLowExt}
			\begin{array}{c}
				v_g(x,t)\\
			\end{array} 
			= \left\{\begin{array}{ll}
				v(x,t) & \textrm{if } (x,t)\in\Omega \times [0,T],\\
				g(x,t) & \textrm{if } (x,t)\in\mathbb{R}^N\backslash\overline{\Omega} \times [0,T],\\
				\min \{v(x,t), g(x,t)\} & \textrm{if }(x,t)\in\partial\Omega\times [0,T]).
			\end{array}
			\right. 
		\end{equation}

	
	\begin{definition}\label{defSoln}
		A function $u\in USC(\overline{\Omega}\times [0,T])$ \emph{(resp., $v\in LSC(\overline{\Omega}\times [0,T])$)} is a subsolution \emph{(resp. supersolution)} of \eqref{mainEq}-\eqref{genBoundaryData}-\eqref{initialdata} if for every $\phi\in C^2(B_\delta(x)\times (t-\delta, t + \delta))$ with $\delta>0$, and every maximum \emph{(resp. minimum)} point $(x_0,t_0)$ of $u^g - \phi$ \emph{(resp. $v_g - \phi$)} over $B_\delta(x)\times (t-\delta, t + \delta)$, we have
			\begin{align*}
				E_\delta(u^g, \phi, x_0,t_0) \leq 0 & \quad \textrm{if } (x_0,t_0) \in \Omega\times(0,T],\\
				\min\{E_\delta(u^g, \phi, x_0,t_0), u(x_0,t_0) - g(x_0,t_0)\} \leq 0 & \quad \textrm{if }(x_0,t_0)\in \partial\Omega\times(0,T],\\
				\min\{E_\delta(u^g, \phi, x_0,t_0), u(x_0,t_0) - u_0(x_0,t_0)\} \leq 0& \quad \textrm{if } x_0\in \overline{\Omega},\ t_0=0,
			\end{align*}
		where $E_\delta$ is defined as in \eqref{eval}. \emph{(resp.}
			\begin{align*}
				E_\delta(v_g, \phi, x_0,t_0) \geq 0 & \quad \textrm{if } (x_0,t_0) \in \Omega\times(0,T],\\
				\max\{E_\delta(v_g, \phi, x_0,t_0), v(x_0,t_0) - g(x_0,t_0)\} \geq 0 & \quad \textrm{if }(x_0,t_0)\in \partial\Omega\times(0,T],\\
				\max\{E_\delta(v_g, \phi, x_0,t_0), v(x_0,t_0) - u_0(x_0,t_0)\} \geq 0 & \quad \textrm{if } x_0\in \overline{\Omega},\ t_0=0.\textrm{\emph{)}}
			\end{align*}
		A viscosity solution of \eqref{mainEq}-\eqref{genBoundaryData}-\eqref{initialdata} is a function whose upper and lower semicontinuous envelopes are respectively sub- and supersolutions, as defined above. LOBC is said to occur whenever the ``classical'' inequality corresponding to \eqref{genBoundaryData} fails at some point, e.g.,
		if for a subsolution $u$ we have
			\begin{equation*}
				u(x_0,t_0) > g(x_0) \quad\textrm{for some } (x_0,t_0)\in \partial\Omega\times(0,T].
			\end{equation*}
		In this case the generalized condition implies that $E_\delta(u^g, \phi, x_0,t_0)\leq 0$ for any $\phi$ as above.
	\end{definition}
		
	\begin{remark}\label{signChange}
		To be precise, the equation to which the results of \cite{barles2016existence} apply is
			\begin{equation}\label{minusEq}
				u_t +(-\Delta)^s u + |Du|^p = 0\quad \textrm{in } \Omega\times(0,T),
			\end{equation}
		which differs from \eqref{mainEq} only in the sign of the nonlinearity (here $\Omega$, $T$, $s$, and $p$ are as before). This does not in any way complicate the analysis, since a simple sign change allows us to go from one equation to the other; i.e., if $u$ is a subsolution (resp. supersolution) of \eqref{minusEq}, then $\tilde{u}=-u$ is a supersolution (resp. subsolution) of \eqref{mainEq}.
	\end{remark}
	
	\begin{remark}\label{noLOBCbottomAndSup}
		Definition \ref{defSoln} also interprets the initial condition \eqref{initialdata} in the viscosity sense, given that $\overline{\Omega}\times\{t=0\}$ (the ``bottom'' of the domain) is part of the parabolic boundary of $\Omega\times(0,T)$. However, as noted in \cite{barles2016existence}, Lemma 4.1, there is no LOBC on this set. That is, if $u$ and $v$ are respectively a bounded, upper-semicontinuous subsolution and a bounded, lower-semicontinuous supersolution of \eqref{mainEq}-\eqref{boundarydata}-\eqref{initialdata}, then
			\begin{equation*}
				u(x,0) \leq u_0(x) \leq v(x,0)\quad \textrm{for all } x\in\overline{\Omega}.
			\end{equation*}
		
		Similarly, as a consequence of \cite{barles2016existence}, Proposition 4.3, and Remark \ref{signChange} above, there is no LOBC for supersolutions of \eqref{mainEq}-\eqref{boundarydata}-\eqref{initialdata} either. More precisely, if $v$ is a bounded, lower-semicontinuous supersolution of \eqref{mainEq}-\eqref{boundarydata}-\eqref{initialdata}, then
			\begin{equation*}
				v(x,t) \geq 0 \quad \textrm{for all } x\in\partial\Omega,\ t\in[0,T].
			\end{equation*}
	\end{remark}
	
	\begin{remark}\label{uBounded}
		An important consequence of the comparison result of \cite{barles2016existence} is that solutions of \eqref{mainEq}-\eqref{boundarydata}-\eqref{initialdata} are uniformly bounded for all $0\leq t \leq T$. Indeed, from the assumptions on the initial data, we have that $\underline{v} \equiv 0$ and 
			\begin{equation*}
				\bar{v}(x) = \left\{\begin{array}{cl}
					\|u_0\|_\infty &\textrm{if }x\in\overline{\Omega},\\
					0 &\textrm{if }x\in\mathbb{R}^N\backslash\overline{\Omega}.
				\end{array}\right.
			\end{equation*}
		are respectively sub- and supersolutions to \eqref{mainEq}-\eqref{boundarydata}-\eqref{initialdata}. Hence, by comparison $0\leq u(x,t) \leq \|u_0\|_\infty$ for all $(x,t)\in\overline{\Omega}\times(0,T)$.
	\end{remark}

\section{Local existence}\label{localExtSec}	
	From the discussion in the Introduction and from Remark \ref{noLOBCbottomAndSup}, Theorem \ref{localExtThm} follows if we can construct a suitable \textit{supersolution} satisfying \eqref{boundarydata} in the classical sense. To this end we follow the corresponding construction in \cite{davila2016continuous}, which addresses a similar (stationary) problem. For convenience, we state the key estimates obtained therein.
	
	\begin{lemma}\label{fracDistEstimate}
		Let $\Omega\subset\mathbb{R}^N$ be a bounded, $C^2$ domain and $s\in(0,1)$. Then, there exists a $\delta>0$ such that, for each $0<\alpha<s$ there exists $c_1>0$ such that
			\begin{equation*}
				(-\Delta)^s(d(x)^\alpha) \geq c_1d(x)^{\alpha-2s}\quad \textrm{for all }x\in\Omega_\delta.
			\end{equation*}
		The constant $c_1$ depends on $N,\ s$ and $\alpha$, and is such that $c_1\to 0$ as $\alpha\to s^-$.
	\end{lemma}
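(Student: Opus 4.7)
The plan is to localize near the boundary, reduce $(-\Delta)^s(d^\alpha)$ to the half-space model where it can be computed in closed form, and control the error arising from the curvature of $\partial\Omega$.

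Fix $x_0 \in \Omega_\delta$ with $\delta$ smaller than the reach of $\partial\Omega$, so that the nearest-point projection $y_0 = \pi(x_0) \in \partial\Omega$ is unique, $d \in C^2$ in a neighborhood of $x_0$, and $Dd(x_0) = \nu(y_0)$, the inner unit normal at $y_0$. I would then introduce the half-space model $w(x) = ((x - y_0)\cdot \nu(y_0))_+^\alpha$ (possibly multiplied by a smooth cutoff to handle growth at infinity) and write
\[
(-\Delta)^s(d^\alpha)(x_0) = (-\Delta)^s w(x_0) + (-\Delta)^s(d^\alpha - w)(x_0).
\]
The first term is the ``main'' term: by translation, rotation, and scaling invariance of $(-\Delta)^s$ it reduces to the one-dimensional identity $(-\Delta_{\mathbb R})^s(t_+^\alpha) = \kappa(\alpha, s)\, t^{\alpha - 2s}$ for $t > 0$, where, up to a positive factor, $\kappa(\alpha, s) = -F(\alpha)$ with $F$ as in the Introduction. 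The sign property $F(\alpha) < 0$ for $\alpha \in (0, s)$, together with $F(s) = 0$, gives $\kappa(\alpha, s) > 0$ and $\kappa(\alpha, s) \to 0$ as $\alpha \to s^-$. Hence $(-\Delta)^s w(x_0) = \kappa(\alpha, s)\, d(x_0)^{\alpha - 2s}$.

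For the error $(-\Delta)^s(d^\alpha - w)(x_0)$, observe that both $d^\alpha - w$ and its gradient vanish at $x_0$, since $d(x_0) = (x_0 - y_0)\cdot\nu(y_0)$ and $Dd(x_0) = \nu(y_0)$. I would split the defining singular integral into the near region $B_r(x_0)$ with $r = d(x_0)/2$ and its complement. On $B_r(x_0)$, the $C^2$ structure of $\partial\Omega$ yields $|d(y) - (y - y_0)\cdot\nu(y_0)| \leq C\,|y - y_0|^2$, both quantities are comparable to $d(x_0)$, and the second derivatives of $d^\alpha - w$ are bounded by $C\,d(x_0)^{\alpha - 1}$; Taylor's theorem then contributes a term of order $d(x_0)^{\alpha - 2s + 1}$. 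On the complement, a direct estimate using the boundedness of $d^\alpha$ on $\Omega$, the polynomial growth of $w$ (tamed by the cutoff and by $\alpha < 2s$), and the fast decay of the kernel yields a further error of order $d(x_0)^{\alpha - 2s + \gamma}$ for some $\gamma = \gamma(s) > 0$.

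Choosing $\delta$ small enough to absorb the error into half of the main term would then yield the lemma, with $c_1$ of the order of $\kappa(\alpha, s)$, so that $c_1 \to 0$ as $\alpha \to s^-$ is inherited automatically. The main obstacle is the error estimate, particularly on the complement of $B_r(x_0)$: selecting the right local model (with an appropriate cutoff) and organizing the split so that the residual is strictly of lower order in $d(x_0)$ requires care, and the constants must be tracked so that $\delta$ can be chosen independently of $\alpha \in (0, s)$ at the sole cost of an $\alpha$-dependent $c_1$.
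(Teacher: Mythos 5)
Your strategy --- subtract the half-space profile $w(x)=((x-y_0)\cdot\nu(y_0))_+^\alpha$ at the projected boundary point $y_0=\pi(x_0)$, compute $(-\Delta)^sw$ via the one-dimensional identity for $t_+^\alpha$ (whose sign for $\alpha\in(0,s)$ is the crux of the matter), and control the curvature error --- is precisely the one in the reference the paper cites for this lemma, \cite{davila2016continuous}, Lemma 3.1; the paper itself does not reproduce the proof. Your near-field analysis on $B_r(x_0)$, $r=d(x_0)/2$, is correct: vanishing of $d^\alpha-w$ and of its gradient at $x_0$, together with the Hessian bound $|D^2(d^\alpha-w)|\le Cd(x_0)^{\alpha-1}$, give a contribution of order $d(x_0)^{\alpha+1-2s}$.

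The genuine gap is in the tail. Bounding the integral over $\{|y-x_0|>r\}$ using only the $L^\infty$-bound on $d^\alpha$ and the growth of $w$, as you propose, does \emph{not} produce a strictly lower-order error. From the region where $r\le|y-x_0|$ and $|y-y_0|\le\rho$ (a fixed radius), even the sharper crude bound $d(y)^\alpha + w(y)\le 2|y-y_0|^\alpha$ gives a contribution of order $\int_r^{C\rho}t^{\alpha-2s-1}\,dt\sim r^{\alpha-2s}=c\,d(x_0)^{\alpha-2s}$, which is the \emph{same} order as the main term and cannot be absorbed by shrinking $\delta$; the plain $L^\infty$-bound gives the even worse order $d(x_0)^{-2s}$. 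The missing ingredient is that the $C^2$-cancellation you invoke only inside $B_r(x_0)$ in fact persists in a fixed tubular neighborhood of $y_0$: $|d(y)-((y-y_0)\cdot\nu(y_0))_+|\le C|y-y_0|^2$ for all $y$ with $|y-y_0|<\rho$, including $y\notin\Omega$, where $d\equiv 0$ and $((y-y_0)\cdot\nu(y_0))_+=O(|y-y_0|^2)$. Combined with the elementary inequality $|a^\alpha-b^\alpha|\le|a-b|^\alpha$ for $a,b\ge 0$, $\alpha\in(0,1)$, this yields $|d(y)^\alpha-w(y)|\le C|y-y_0|^{2\alpha}$, so the tail over $\{r<|y-x_0|,\ |y-y_0|<\rho\}$ is $O(d(x_0)^{2\alpha-2s})=d(x_0)^{\alpha-2s}\cdot O(d(x_0)^\alpha)$, now strictly lower order. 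The remaining tail $\{|y-y_0|\ge\rho\}$ and the cutoff correction to $w$ each contribute only $O(1)$, and with these pieces the argument closes as you outline.
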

	\begin{proof}
		This is a special case of Lemma 3.1 in \cite{davila2016continuous}.
	\end{proof}
	
	\begin{lemma}\label{fracConeEstimate}
		Let $\alpha\in(0,2s)$ and let $y\in\mathbb{R}^N$. Then, there exists a constant $c_2>0$ such that
			\begin{equation*}
				(-\Delta)^s(|\cdot - y|^\alpha) \geq -c_2|x-y|^{\alpha-2s}\quad\textrm{for all }x\in\mathbb{R}^N\backslash\{y\}.
			\end{equation*}
		Moreover, there exists a constant $\bar{c}_2>0$ such that $c_2\leq \bar{c}_2$ as $s\to 1^-$.
	\end{lemma}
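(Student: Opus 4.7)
The plan is to exploit the translational, rotational, and scaling symmetries of $u(x) := |x-y|^\alpha$. By translation invariance of $(-\Delta)^s$, it suffices to treat $y = 0$, so take $u(x) = |x|^\alpha$, which is radial and positively $\alpha$-homogeneous. Combining $u(\lambda x) = \lambda^\alpha u(x)$ with the standard rescaling identity $(-\Delta)^s(u(\lambda\,\cdot))(x) = \lambda^{2s}(-\Delta)^s u(\lambda x)$ for $\lambda > 0$ produces $(-\Delta)^s u(\lambda x) = \lambda^{\alpha-2s}\,(-\Delta)^s u(x)$. Rotation invariance then gives the sharp identity
\begin{equation*}
(-\Delta)^s(|x|^\alpha) = K(N,s,\alpha)\,|x|^{\alpha-2s}, \qquad x \in \mathbb{R}^N \setminus \{0\},
\end{equation*}
where $K(N,s,\alpha) := (-\Delta)^s(|\cdot|^\alpha)(e_1)$ is a scalar independent of $x$. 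Thus the lemma reduces to proving that $K(N,s,\alpha)$ is finite, with a lower bound stable as $s \to 1^-$.

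The next step is to verify finiteness of $K$. Translating the variable of integration,
\begin{equation*}
K = C_{N,s}\, P.V.\!\int_{\mathbb{R}^N} \frac{1 - |e_1 - z|^\alpha}{|z|^{N+2s}}\,dz.
\end{equation*}
Near $z = 0$, Taylor-expanding $|e_1 - z|^\alpha$ shows the linear term is odd in $z$ (so it vanishes in the principal value) while the quadratic remainder produces an integrand of order $|z|^{2-N-2s}$, which is integrable since $s<1$. At infinity the integrand decays like $|z|^{\alpha - N - 2s}$, integrable precisely because $\alpha < 2s$. Setting $c_2 := \max\{1, -K(N,s,\alpha)\} > 0$ then yields the stated pointwise inequality.

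For the behavior as $s \to 1^-$, the objective is to bound $K(N,s,\alpha)$ uniformly on intervals $(s_0, 1)$ with $s_0 > \alpha/2$. The natural approach is to split the integral at a fixed radius $r > 0$ around $e_1$: in the ball, use the same second-order Taylor expansion together with the asymptotic $C_{N,s} \sim c_N(1-s)$ as $s \to 1^-$ to absorb the $(1-s)^{-1}$-type divergence produced by $\int_{|z|<r} |z|^{2-N-2s}\,dz$; outside the ball, control the integrand uniformly by dominated convergence. This yields the expected limit $K(N,s,\alpha) \to -\Delta(|x|^\alpha)(e_1) = -\alpha(\alpha + N - 2)$, and in particular the uniform bound $\bar{c}_2$.

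The main obstacle I anticipate is precisely the matching carried out in this last step: both $C_{N,s}$ and the kernel $|z|^{-N-2s}$ are singular in the limit, so one must account carefully for how the vanishing factor $(1-s)$ from $C_{N,s}$ cancels against the near-singular contribution of the PV integral. Once this cancellation is made quantitative, the existence of $\bar{c}_2$ follows immediately.
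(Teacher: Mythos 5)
Your argument is correct, and it is genuinely different from the paper's, which simply cites Lemma 3.3 of Dávila--Quaas--Topp (\cite{davila2016continuous}) without reproducing a proof. Your route is self-contained: after reducing to $y=0$, the combination of the rescaling identity $(-\Delta)^s(u(\lambda\,\cdot))(x)=\lambda^{2s}((-\Delta)^s u)(\lambda x)$ with the $\alpha$-homogeneity and radiality of $|x|^\alpha$ pins down the exact form $(-\Delta)^s(|x|^\alpha)=K(N,s,\alpha)\,|x|^{\alpha-2s}$, so the lemma collapses to verifying $|K|<\infty$ (and its stability as $s\to 1^-$). Your finiteness check is sound on all three scales: near $z=0$ the odd linear term drops out of the principal value and the quadratic remainder gives an integrable $|z|^{2-N-2s}$ singularity since $s<1$; near $z=e_1$ the integrand is bounded since $\alpha>0$; and at infinity the decay $|z|^{\alpha-N-2s}$ is integrable precisely because $\alpha<2s$. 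The choice $c_2=\max\{1,-K\}$ then gives the one-sided bound regardless of the sign of $K$. For the $s\to 1^-$ statement, your plan --- splitting at a fixed radius and using $C_{N,s}\sim c_N(1-s)$ to cancel the $(1-s)^{-1}$ blow-up of $\int_{|z|<r}|z|^{2-N-2s}\,dz$, recovering $K\to -\alpha(\alpha+N-2)$ --- is the standard way to establish the uniform bound, and you have correctly identified the one delicate cancellation. Compared to the paper's one-line citation, your proof has the advantage of being transparent and elementary, and it yields the sharper structural fact that the left-hand side is \emph{exactly} a constant multiple of $|x-y|^{\alpha-2s}$, not merely bounded below by one; the cited reference, on the other hand, handles a more general setting (and the companion boundary estimate in Lemma \ref{fracDistEstimate}) in a unified way.
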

	\begin{proof}
		This is a special case of Lemma 3.3 in \cite{davila2016continuous}.
	\end{proof}
	
	\begin{proof}[Proof of Theorem \ref{localExtThm}:] Let $u=u(x,t)$ denote the viscosity solution of \eqref{mainEq}-\eqref{boundarydata}-\eqref{initialdata}, which a priori satisfies \eqref{boundarydata} only in the viscosity sense. Our aim is to construct a function $\bar{v} = \bar{v}(x)$ that satisfies the following:
		\begin{align}
			(-\Delta)^s\bar{v}\geq|D\bar{v}|^p & \quad\textrm{in } \Omega_\delta \times (0,T^*),\label{eqbarv}\\
			\bar{v} = 0 & \quad\textrm{in } \mathbb{R}^N\backslash\Omega \times (0,T^*),\label{0exterior}\\
			\bar{v} \geq u & \quad\textrm{in } \Omega\backslash\Omega_\delta\times (0,T^*)\label{vbeatsu},\\
			\bar{v} \geq u_0 & \quad\textrm{in } \overline{\Omega}_\delta,\label{vbeatsu0}
		\end{align}
	where $\delta>0$ and $T^*>0$ are yet to be determined. Here \eqref{0exterior}, \eqref{vbeatsu} and \eqref{vbeatsu0} are meant in the pointwise sense. Note that $\bar{v}$ is time-independent; time only plays a role in \eqref{vbeatsu}, which can fail for a sufficiently large time.
	
	Let $y\in\partial\Omega$, $\beta^*<\alpha<\min\{\beta,s\}$, and let $\lambda, \mu>0$ to be chosen later. We write $M=[u_0]_{\beta,\overline{\Omega}}$ and define
		\begin{equation}
			v_y(x) = \lambda M|x-y|^\alpha + \mu Md(x)^\alpha.
		\end{equation}
	Note that $v_y$ satisfies 	
		\begin{equation*}
			v_y(x)\geq 0 \quad \textrm{for all } x\in\mathbb{R}^N\backslash\Omega 
		\end{equation*}
	and 
	\begin{equation}\label{vyy}
		v_y(y) = 0.
	\end{equation}
	
	For all $x\in \Omega_\delta$, denote by $\pi(x)$ the unique point of $\partial\Omega$ such that $d(x) = |x-\pi(x)|$; $\pi(x)$ is well defined for small enough $\delta>0$ and smooth $\partial\Omega$. Assume $\delta\leq 1$ and $\mu\geq 1$. Using \eqref{bottomComp} and $\alpha<\beta$, we have, for all $x\in\overline{\Omega}_\delta$,
		\begin{align*}
			v_y(x) \geq{}& \mu Md(x)^\alpha = \mu [u_0]_{\beta, \overline{\Omega}}\,|x-\pi(x)|^\alpha \geq [u_0]_{\beta, \overline{\Omega}}\,|x-\pi(x)|^\beta  \\
			\geq{}& |u_0(x) - u_0(\pi(x))| = u_0(x),
		\end{align*}
	where we have used that $u_0\geq 0$ and $u_0(\pi(x))=0$, by \eqref{bottomComp}. This shows that $v_y$ satisfies \eqref{vbeatsu0}.
	
	Now, set $\lambda>0$ large enough so that
		\begin{equation}\label{critical1}
			\lambda|x-y|^\alpha > |x-y|^\beta \quad\textrm{for all } x\in \Omega\backslash\Omega_\delta.
		\end{equation}
	Thus, reasoning as above,
		\begin{align*}
			v_y(x) > \lambda M|x-y|^\alpha \geq [u_0]_\beta |x-y|^\beta \geq u_0(x) \quad \textrm{for all } x\in\Omega\backslash\Omega_\delta.
		\end{align*}
	Note this inequality is \emph{strict} since we are at a positive distance from the boundary. Since $v_y - u_0$ is strictly positive over the compact set $\Omega\backslash\Omega_\delta$, there exists $\epsilon>0$ such that
		\begin{equation*}
			u_0(x) + \epsilon < v_y(x) \quad \textrm{for all } x\in\Omega\backslash\Omega_\delta.
		\end{equation*}
	Recall that the continuous viscosity solution $u$ satisfies \eqref{initialdata} in the classical sense (see Remark \ref{noLOBCbottomAndSup}). This implies that $u(\cdot, t)\rightarrow u_0$ as $t\rightarrow 0^+$ uniformly over $\overline{\Omega}$. Therefore, there exists $T^*>0$ such that
		\begin{equation*}
		u(x,t) < u_0(x) + \epsilon < v_y(x), \quad\textrm{for all } x\in\Omega\backslash\Omega_\delta \text{ and all } t<T^*.
		\end{equation*}
	This gives \eqref{vbeatsu} for $v_y$.
		
	Applying Lemmas \ref{fracDistEstimate} and \ref{fracConeEstimate}, and using that $d(x)\leq |x-y|$, we have
		\begin{align*}
			(-\Delta)^s v_y(x) \geq{}& \mu Mc_1d(x)^{\alpha-2s} - \lambda Mc_2|x-y|^{\alpha-2s} \\
			\geq{}& Md(x)^{\alpha-2s}(\mu c_1 - \lambda c_2) \qquad\textrm{for all } x\in\Omega_\delta,
		\end{align*} 
	for all sufficiently small $\delta>0$. On the other hand, using that $\alpha<1$ and again that $d(x)\leq |x-y|$, we compute
		\begin{equation*}
			|Dv_y(x)|^p \leq M^pd(x)^{p(\alpha-1)}(\mu + \lambda)^p.
		\end{equation*}
	Combining these estimates, we obtain
		\begin{align*}
			&(-\Delta)^s(v_y(x)) - |Dv_y(x)|^p\\
			&\qquad\geq Md(x)^{\alpha-2s}\left(\mu c_1 - \lambda c_2 - M^{p-1}(\mu + \lambda)^pd(x)^{p(\alpha-1) - (\alpha-2s)}\right).
		\end{align*}
	We now take $\mu>0$ large enough, so that $\mu c_1 - \lambda c_2 > \frac{\mu c_1}{2}$, 
	then take $\delta>0$ small enough, so that
	\begin{equation}\label{critical2}
		M^{p-1}(\mu + \lambda)^p\delta^{p(\alpha-1)-(\alpha-2s)} < \frac{\mu c_1}{4}.
	\end{equation}
	Thus,
		\begin{equation*}
			(-\Delta)^sv_y(x) - |Dv_y(x)|^p \geq Md(x)^{\alpha-2s}\left(\frac{\mu c_1}{4}\right) \geq 0,
		\end{equation*}
	which gives that $v_y$ satisfies \eqref{eqbarv}.
	
	By standard arguments, the function
		\begin{equation*}
			v(x) = \inf_{y\in\partial\Omega} v_y(x)
		\end{equation*}
	is a viscosity supersolution of \eqref{eqbarv}. It also satisfies \eqref{vbeatsu} and \eqref{vbeatsu0}, since these are satisfied by $v_y$ for all $y\in\partial\Omega$. Furthermore, $v$ is continuous across $\partial\Omega$ and, by \eqref{vyy}, satisfies
		\begin{equation*}
			v(x)\leq v_x(x) = 0 \quad\textrm{for all } x\in\partial\Omega.
		\end{equation*}
	Therefore, applying the comparison principle of \cite{barles2016existence} over the domain $\Omega_\delta\times(0,T^*)$ we obtain that $u(x,t) \leq v(x)$ for all $x\in\overline{\Omega}_\delta$, $0\leq t\leq T^*$. Hence we conclude that
		\begin{equation*}
			u(x,t) \leq v(x) = 0 \quad \textrm{for all } x\in\partial\Omega,\ 0\leq t\leq T^*.
		\end{equation*}
	\end{proof}
	
	\begin{remark}\label{localExtCrit}
		Local existence can be proven for initial data with ``critical'' regularity, i.e., $u_0\in C^{\beta^*}(\overline{\Omega})$, $\beta^*=\frac{p-2s}{p-1}$, in exactly the same way, provided $[u_0]_{\beta^*,\overline{\Omega}}$ is sufficiently small. More precisely, we define
			\begin{equation*}
				v_y(x) = M|x-y|^{\beta^*} + \mu M d(x)^{\beta^*},
			\end{equation*}
		with $M=[u_0]_{\beta^*,\overline{\Omega}}$. Proceeding as above, we set $\mu>0$ so that ${\mu c_1 - c_2 > \frac{\mu c_1}{2}}$, and require that 
			\begin{equation*}
				[u_0]_{\beta^*,\overline{\Omega}} \leq \left(\frac{\mu c_1}{2(\mu + 1)^p}\right)^{\frac{1}{p-1}}
			\end{equation*}
		is satisfied, instead of \eqref{critical2}. Note that the estimate corresponding to \eqref{critical1} is satisfied automatically.
	\end{remark}
	
	\begin{remark}
		The estimates from Lemmas \ref{fracDistEstimate} and \ref{fracConeEstimate} are stable as $s\to 1^-$. Therefore, Theorem \ref{localExtThm} implies the existence of local solutions for the Dirichlet problem associated to \eqref{vhj} with $u_0\in C^\beta(\overline{\Omega})$, with $\beta>\beta^*=\frac{p-2}{p-1}$; and, following Remark \ref{localExtCrit}, for $u_0\in C^{\beta^*}(\overline{\Omega})$, provided $[u_0]_{\beta^*, \overline{\Omega}}$ is sufficiently small.
	\end{remark}

\section{Technical results}\label{techSec}
\subsection{Regularization}\label{regSec}

In this section we use regularization by $\inf$-$\sup$-convolution, introduced in \cite{lasry1986remark}, to obtain a supersolution of \eqref{mainEq} which has the regularity needed for the proof of Theorem \ref{nonExtThm}. This function approximates the viscosity solution $u$ of \eqref{mainEq} uniformly over $\overline{\Omega}\times [0,T]$ for any $T>0$ as the regularization parameters tend to zero.
	\begin{definition}\label{convDef}
		Let $v\in C(\overline{\Omega} \times [0,T])$ and $\epsilon, \kappa > 0$. We define
			\begin{align*}
				& v_{\epsilon, \kappa}(x,t) = \inf_{(y,s)\in \Omega \times (0,T)} \left(v(y,s) + \frac{1}{2\epsilon} |x-y|^2 + \frac{1}{2\kappa}|t-s|^2\right),\\
				& v^{\epsilon}(x,t) = \sup_{y\in \Omega} \left(v(y,t) - \frac{1}{2\epsilon} |x-y|^2\right).
			\end{align*}
		We can also define $v^{\epsilon, \kappa}$ and $v_{\epsilon}$ similarly. Note the use of just one superscript when regularization if performed only in the space variable.
	\end{definition}

We collect a series of well-known facts regarding these operations which will be used shortly.
	\begin{proposition}\label{supInfConvProperties}
		Assume $u\in C(\overline{\Omega} \times [0,T])$, and let $\epsilon, \kappa, \delta > 0$.
		\begin{enumerate}[(i)]
			\item Both operations preserve both pointwise upper and lower bounds, i.e., 
					\begin{align*}
						\inf u \leq u_{\epsilon, \kappa} \leq \sup u,\\
						\inf u \leq u^\epsilon \leq \sup u,
					\end{align*}
				where $\inf$ and $\sup$ are taken over $\Omega\times (0,T)$.
			\item\label{optimalpoints} Let $\epsilon^* = 2\sqrt{\epsilon\|u\|_\infty}$, $\kappa^* = 2\sqrt{\kappa\|u\|_\infty}$, $\Omega^{\epsilon^*} = \{ x \in \Omega\ | \ d(x,\partial \Omega)>\epsilon^*\}$. For all $(x,t)\in \Omega^{\epsilon^*} \times (\kappa^*, T-\kappa^*)$, there exist $(y,s)\in\Omega\times(0,T)$ such that
					\begin{equation*}
						u_{\epsilon, \kappa}(x,t) = u(y,s) + \frac{1}{2\epsilon} |x-y|^2 + \frac{1}{2\kappa}|t-s|^2.
					\end{equation*}
				In other words, the $\sup$ and $\inf$ in the definition of the convolutions are achieved, provided we are at sufficient distance from the boundary.
			\item\label{supinflipschitz} Both $u_{\epsilon, \kappa}$ and $u^{\epsilon, \kappa}$ are Lipschitz continuous in $x$ with constant $\frac{K}{\sqrt{\epsilon}}$, where $K = 2\|u\|_\infty$. That is,
				\begin{equation*}
					\sup_{\substack{x,y\in\Omega\\t\in[0,T]}} \frac{|u(x,t) - u(y,t)|}{|x-y|} \leq \frac{K}{\sqrt{\epsilon}}.
				\end{equation*}
			Similarly, they are Lipschitz continuous in $t$ with constant $\frac{K}{\sqrt{\kappa}}$.
			
			\item\label{unifConverApp} $u^{\epsilon, \kappa}, u_{\epsilon, \kappa} \rightarrow u$ uniformly as $\epsilon, \kappa \rightarrow 0$, and similarly for $u^\epsilon$.
			
			\item\label{twicediff} $u^{\epsilon, \kappa}, u_{\epsilon, \kappa}$ are respectively semiconvex and semiconcave. In particular, they are twice differentiable a.e. That is, there are measurable functions $a:\Omega\times [0,T] \to \mathbb{R}$, ${q:\Omega\times [0,T] \to \mathbb{R}^n}$, ${M: \Omega\times [0,T] \to S(n)}$ such that
				\begin{align*}
					u^{\epsilon, \kappa}(y,s) ={}& u^{\epsilon, \kappa}(x,t) + a(x,t)(s-t) + \langle q(x,t),y-x \rangle\\
					&+ \langle M(x,t)(y-x),y-x \rangle + o(|y-x|^2 + |s-t|). 
				\end{align*}
			We will denote $a=(u^{\epsilon, \kappa})_t,\, q=Du^{\epsilon, \kappa},\, M= D^2 u^{\epsilon, \kappa}$ for simplicity. The same goes for $u_{\epsilon, \kappa}$.
			
			\item\label{hessBounds} With the notation above,
				\begin{align*}
					D^2u_{\epsilon, \kappa} \leq \frac{1}{\epsilon}I \quad \textrm{and} \quad D^2u^{\epsilon, \kappa} \geq -\frac{1}{\epsilon}I  \quad a.e. \text{ in } \Omega\times [0,T].
				\end{align*}	
			\item\label{semigroupprop} $(u_{\epsilon, \kappa})_\delta = u_{\epsilon + \delta, \kappa}$.
			
			\item\label{doubleconvineq} $(u_{\epsilon + \delta, \kappa})^\delta \leq u_{\epsilon, \kappa}$.
			
			\item\label{preservSemiconcavity} The operation $u\mapsto (u_\delta)^\delta$ preserves semiconcavity, i.e., if $u$ is $\frac{1}{2\epsilon}$-semiconcave, then $(u_\delta)^\delta$ is $\frac{1}{2\epsilon}$-semiconcave.
		\end{enumerate}	
	\end{proposition}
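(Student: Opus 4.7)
The plan is to dispatch each of (i)--(ix) by a short, self-contained argument based on the definition of the convolutions together with classical tools (uniform continuity, Alexandrov's theorem, and the Moreau resolvent identity). I group the items by the underlying idea rather than follow the numbering. The elementary parts (i), (iii) and (iv), together with (ii), all hinge on the fact that in the infimum defining $u_{\epsilon,\kappa}(x,t)$ a competitor $(y,s)$ with $|x-y|>\epsilon^*=2\sqrt{\epsilon\|u\|_\infty}$ contributes at least $-\|u\|_\infty+2\|u\|_\infty\ge u(x,t)$ and is therefore beaten by the trivial choice $(y,s)=(x,t)$; similarly for the time variable. Part (i) then follows from taking $(y,s)=(x,t)$ and using nonnegativity of the penalty; part (ii) from the resulting bound $|x-y|\le\epsilon^*$, $|t-s|\le\kappa^*$ at any optimal point; part (iii) from shifting the competitor $y\mapsto y+(x_1-x_2)$ and bounding the resulting linear term by the same distance; and part (iv) from combining this distance bound with uniform continuity of $u$ on $\overline{\Omega}\times[0,T]$.

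For (v) and (vi), the key observation is that, for fixed $t$, the function $x\mapsto u_{\epsilon,\kappa}(x,t)-\tfrac{1}{2\epsilon}|x|^2$ is a pointwise infimum of affine functions of $x$ (expand the square $|x-y|^2$), hence concave. This gives semiconcavity of $u_{\epsilon,\kappa}$ with constant $1/\epsilon$, which is (vi); Alexandrov's theorem then provides twice-differentiability almost everywhere, which is (v). The case of $u^{\epsilon,\kappa}$ is dual (semiconvexity from a sup of affine functions).

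The identities (vii)--(ix) all follow from Moreau resolvent-type computations in the spatial variable. For (vii), plugging the definition of $(u_{\epsilon,\kappa})_\delta$ into itself reduces to the inner minimization of $\tfrac{1}{2\epsilon}|y-z|^2+\tfrac{1}{2\delta}|x-y|^2$ in $y$, whose minimum, by a direct computation, is $\tfrac{1}{2(\epsilon+\delta)}|x-z|^2$; this yields $u_{\epsilon+\delta,\kappa}$. For (viii), the key step is the general inequality $(v_\delta)^\delta\le v$, obtained by taking $y=x$ in the inner infimum of the double convolution $(v_\delta)^\delta(x)=\sup_z[\inf_y(v(y)+\tfrac{1}{2\delta}|z-y|^2)-\tfrac{1}{2\delta}|x-z|^2]$; applying this with $v=u_{\epsilon,\kappa}$ and invoking (vii) gives the claim. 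Part (ix) is then immediate, since the composed operation $(\cdot_\delta)^\delta$ inherits the concavity structure of $x\mapsto(\cdot)-\tfrac{1}{2\epsilon}|x|^2$ that underlies (v)--(vi).

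The main obstacle, if any, is keeping the two regularization parameters $\epsilon$ and $\kappa$ separate in (vii)--(ix): these identities regularize only in $\epsilon$, and one must verify that the time penalty $\tfrac{1}{2\kappa}|t-s|^2$ passes through unchanged. It does, because the inner minimizations producing the resolvent identity concern only the spatial variable, so the $\kappa$-term is an inert additive summand at each step, and the spatial arguments reduce to the one-parameter case treated in \cite{lasry1986remark}.
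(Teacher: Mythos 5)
Your proposal gives actual arguments where the paper's proof is essentially a citation dump (Dragoni's notes for the elementary items, Crandall '96 for (vii)--(ix), and the Crandall--Ishii--Lions appendix for the Alexandrov-type expansion); in that sense you are filling in detail rather than departing from the paper's route. Items (i)--(iv), (vi), (vii) and (viii) are handled correctly: the competitor-exclusion estimate $|x-y|\le\epsilon^*$ does all the work in (i)--(iv), the inf-of-affine-functions observation gives the spatial semiconcavity in (vi), and the Moreau resolvent composition, together with the elementary $(v_\delta)^\delta \le v$, yields (vii) and (viii). Your closing remark correctly notes that the time penalty is an inert additive summand through the spatial resolvent computations.

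Two items, however, need more than you give them. For (v), invoking Alexandrov only supplies the second-order expansion in $x$ for each fixed time slice; the conclusion of the proposition is a \emph{joint} parabolic expansion with error $o(|y-x|^2 + |s-t|)$, first order in $t$ and second order in $x$ simultaneously. Passing from a.e.\ spatial Alexandrov points (for each $t$) and a.e.\ temporal Rademacher points (for each $x$) to a full-measure set of joint parabolic points requires a Fubini/covering argument that exploits the uniform semiconcavity and Lipschitz constants; this is exactly why the paper refers to the appendix of \cite{crandall1992user} rather than to the two classical theorems by name.

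The more serious gap is (ix). You assert that preservation of $\tfrac{1}{2\epsilon}$-semiconcavity under $u\mapsto(u_\delta)^\delta$ is ``immediate'' from the structure underlying (v)--(vi), but it is not. Inf-convolution makes a function $\tfrac{1}{\delta}$-semiconcave regardless of $u$, and sup-convolution by itself makes a function $\tfrac{1}{\delta}$-semi\emph{convex}, so neither operation alone preserves the $\tfrac{1}{2\epsilon}$-semiconcavity constant; the claim is specific to the composite map and to the interplay between the given semiconcavity of $u$ and the Lasry--Lions pair. This is a genuine lemma (the paper cites it as Proposition 4.5 of \cite{crandall1996equivalence}), whose proof requires its own argument, typically by analyzing separately the contact set $\{(u_\delta)^\delta = u\}$ (where the concavity modulus of $u$ is inherited) and its complement (where the eigenvalue structure of $D^2(u_\delta)^\delta$, as in Proposition \ref{D2eigenProp}, supplies the bound). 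Your sketch should be expanded, or at minimum the reference cited, because (ix) is used essentially in the proof of Proposition \ref{secAppIneq}.
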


	\begin{proof}
		The easier proofs follow more or less directly from the definitions (see e.g., \cite{dragoniintroduction}), while \eqref{semigroupprop} and \eqref{doubleconvineq} may be found in \cite{crandall1996equivalence}; \eqref{preservSemiconcavity} is Proposition 4.5 in \cite{crandall1996equivalence}. Property \eqref{twicediff} uses the well-known theorems of Rademacher and Alexandrov on the differentiability of Lipschitz and convex functions, respectively; see \cite{evans2015measure} and the Appendix of \cite{crandall1992user}.
	\end{proof}

For a given $v\in C(\overline{\Omega} \times [0,T])$, we will obtain a function which is Lipschitz continuous with respect to $t$ and $C^{1,1}$ with respect to $x$, following \cite{crandall1996equivalence}. 

First, denote by $\tilde{v}$ the lower ``$0$-extension'' of $v$, as defined in \eqref{upLowExt}. That is, $\tilde{v}=v_g$ with $g\equiv 0$ (this is only to avoid the notation ``$v_0$''). For $v\in C(\overline{\Omega} \times [0,T])$ with $v|_{\partial\Omega}\equiv 0$, this leads to $\tilde{v}\in BUC(\mathbb{R}^N\times[0,T])$. We remark that this is precisely what a solution of \eqref{mainEq}-\eqref{boundarydata}-\eqref{initialdata} with no LOBC satisfies. We then iterate the convolution operators defined above:
	\begin{equation}\label{infsupConv}
		w:=((\tilde{v}_{\epsilon,\kappa})_\delta)^\delta = (\tilde{v}_{\epsilon + \delta,\kappa})^\delta,
	\end{equation}
where $\epsilon, \delta, \kappa >0$. The expression furthest to the right follows from \eqref{semigroupprop} Proposition \ref{supInfConvProperties}, \eqref{semigroupprop}.

As a first step we recall that $\inf$-convolution (resp., $\sup$-convolution) preserves supersolutions (resp. subsolutions) in the viscosity sense, albeit in a proper subset of the original domain.

	\begin{lemma}\label{firstAppIneq}
		Let $u\in C(\overline{\Omega}\times[0,T])$ be the (unique) viscosity solution of \eqref{mainEq}. Then $\tilde{u}_{\epsilon, \kappa}$ (see Definition \ref{convDef} and subsequent remarks), is a viscosity supersolution of \eqref{mainEq} in $\Omega^{\epsilon^*}\times(\kappa^*, T-\kappa^*]$, where
			\begin{equation}\label{epsStar}
				\epsilon^* = 2\sqrt{\epsilon\|u\|_\infty}, \quad \kappa^*=2\sqrt{\kappa\|u\|_\infty},
			\end{equation}
		and $\|u\|_\infty = \sup_{\Omega\times(0,T)} u$.
	\end{lemma}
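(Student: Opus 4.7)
The plan is to adapt the classical argument showing that inf-convolution preserves viscosity supersolutions in the interior, with the extra bookkeeping demanded by the nonlocal operator. Take a test function $\phi\in C^2(B_\delta(x_0)\times(t_0-\delta, t_0+\delta))$ touching $\tilde{u}_{\epsilon,\kappa}$ from below at a point $(x_0,t_0)\in\Omega^{\epsilon^*}\times(\kappa^*, T-\kappa^*]$. By Proposition \ref{supInfConvProperties}(ii), there exist optimal points $(y_0,s_0)\in\Omega\times(0,T)$ with
\[
\tilde{u}_{\epsilon,\kappa}(x_0,t_0) = \tilde{u}(y_0,s_0) + \tfrac{1}{2\epsilon}|x_0-y_0|^2 + \tfrac{1}{2\kappa}|t_0-s_0|^2.
\]
Crucially, $(y_0,s_0)$ lies strictly in the interior of $\Omega\times(0,T)$, where $\tilde{u}=u$ satisfies the PDE in the viscosity sense.

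Next, write $a:=x_0-y_0$, $b:=t_0-s_0$, and translate the test function by setting
\[
\psi(y,s) := \phi(y+a, s+b) - \tfrac{1}{2\epsilon}|a|^2 - \tfrac{1}{2\kappa}|b|^2.
\]
Combining $\phi\leq\tilde{u}_{\epsilon,\kappa}$ near $(x_0,t_0)$, the optimality relation above, and the universal inequality $\tilde{u}_{\epsilon,\kappa}(y+a, s+b) \leq \tilde{u}(y,s) + \tfrac{1}{2\epsilon}|a|^2 + \tfrac{1}{2\kappa}|b|^2$ (valid for every admissible $(y,s)$), one immediately checks that $\psi$ touches $\tilde{u}$ from below at $(y_0,s_0)$ with equality there. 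The viscosity supersolution property of $u$ at this interior point then yields $E_\delta(\tilde{u},\psi,y_0,s_0)\geq 0$.

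The last step is to transfer this inequality back to $(x_0,t_0)$ and $\phi$. By construction, $\psi_t(y_0,s_0)=\phi_t(x_0,t_0)$ and $D\psi(y_0,s_0)=D\phi(x_0,t_0)$, and a direct change of variables gives $I[B_\delta(y_0)](\psi(\cdot,s_0)) = I[B_\delta(x_0)](\phi(\cdot,t_0))$. What remains is the tail comparison
\[
I[\mathbb{R}^N\setminus B_\delta(y_0)](\tilde{u}(\cdot,s_0)) \leq I[\mathbb{R}^N\setminus B_\delta(x_0)](\tilde{u}_{\epsilon,\kappa}(\cdot,t_0)),
\]
which follows from the pointwise monotonicity
\[
\tilde{u}(y_0,s_0) - \tilde{u}(y_0+z,s_0) \leq \tilde{u}_{\epsilon,\kappa}(x_0,t_0) - \tilde{u}_{\epsilon,\kappa}(x_0+z,t_0), \qquad z\in\mathbb{R}^N,
\]
which is itself an immediate consequence of the optimality at $(y_0,s_0)$ together with the inf-convolution inequality at the shifted point $(x_0+z,t_0)$, tested against the competitor $(y_0+z,s_0)$. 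Assembling the pieces gives $E_\delta(\tilde{u}_{\epsilon,\kappa}, \phi, x_0, t_0)\geq 0$.

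The main technical obstacle is the tail comparison: one must pass from a pointwise inequality between signed integrands to the inequality of the corresponding principal-value integrals. This requires the uniform boundedness of $\tilde{u}_{\epsilon,\kappa}$ (Proposition \ref{supInfConvProperties}(i)) to secure integrability at infinity, and the $C^2$-regularity of $\phi$ to control the $P.V.$ singularity near the origin (here the pointwise monotonicity itself propagates the cancellation needed to make both $P.V.$s well defined). The rest of the argument is the interior-preservation mechanism for inf-convolutions familiar from the local setting.
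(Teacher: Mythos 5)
Your proof is correct and carries out in full the standard inf-convolution argument that the paper invokes by simply citing Proposition 5.5 of Caffarelli--Silvestre \cite{caffarelli2009regularity} in its time-dependent, translation-invariant form. The interior optimal point from Proposition \ref{supInfConvProperties}\eqref{optimalpoints}, the translated test function $\psi$ touching $\tilde{u}$ from below, the change of variables for the local part of $E_\delta$, and the tail comparison via the pointwise monotonicity obtained from testing the inf-convolution inequality at the shifted competitor $(y_0+z,s_0)$ are exactly the mechanism behind the cited result; the only thing worth stating explicitly (which your argument implicitly uses) is that the infimum defining $\tilde{u}_{\epsilon,\kappa}$ must be taken over all of $\mathbb{R}^N\times(0,T)$, as is natural for the $0$-extension $\tilde{u}$, so that $(y_0+z,s_0)$ is always an admissible competitor in the tail.
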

	
	\begin{proof}
		This is a time-dependent version of Proposition 5.5 in \cite{caffarelli2009regularity}, in the particular case where the equation in its entirety is translation invariant, i.e., when there is no ``$(x,t)$-dependence''. In this case, the regularized function satisfies exactly the same inequality as the original supersolution.
	\end{proof}
		
	\begin{remark}
		We briefly explain the origin of the constants appearing in Lemma \ref{firstAppIneq}. The proof uses the fact that the infimum in Definition \ref{convDef} is achieved at some $(\hat{y}, \hat{s})\in \mathbb{R}^N\times[0,T]$. A simple computation then shows that $|\hat{y} - x|\leq \epsilon^*$, $|\hat{s} - t|\leq \kappa^*$, as defined in Lemma \ref{firstAppIneq}. Therefore, to ensure that $(\hat{y}, \hat{s})\in \Omega\times(0,T)$, so that we can test the equation at this point, we require that $d(x)>\epsilon^*$ and $\kappa^*<t<T-\kappa^*$.
		
		We also remark that, although slightly different from the one given in Definition \ref{defSoln}, the notion of solution given in \cite{caffarelli2009regularity} is essentially the same when concerning the behavior of either sub- and supersolutions at interior points (in particular, for the purposes of Lemma \ref{firstAppIneq}).
	\end{remark}
	
We now state a key proposition concerning the eigenvalues of a regularized function.
	
	\begin{proposition}\label{D2eigenProp}
		Let $v\in BUC(\mathbb{R}^N)$, $\delta>0$ and suppose that $w=(v_\delta)^\delta$ is differentiable everywhere. If for some $\hat{x}$, $w(\hat{x})<v(\hat{x})$ and $w$ is twice differentiable at $\hat{x}$, then $D^2w(\hat{x})$ has $-\frac{1}{\delta}$ as an eigenvalue.
	\end{proposition}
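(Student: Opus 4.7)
The plan is to reformulate $w$ as the biconjugate of an auxiliary convex function, and then invoke a classical rigidity result about convex envelopes off their contact set.

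First, I would introduce $\tilde v(z) := v(z) + \frac{1}{2\delta}|z|^2$ and compute directly from the definitions
\[
v_\delta(y) = \frac{|y|^2}{2\delta} - \tilde v^{*}\!\left(\frac{y}{\delta}\right), \qquad w(x) = -\frac{|x|^2}{2\delta} + \tilde v^{**}(x),
\]
where $^{*}$ denotes Legendre conjugation; the second identity follows from the first by inserting it into the outer sup in $w = (v_\delta)^\delta$, completing the square, and substituting $p = y/\delta$. Since $v \in BUC(\mathbb{R}^N)$, $\tilde v$ is continuous and coercive, so $\tilde v^{**}$ is precisely the convex envelope of $\tilde v$ and $\mathrm{epi}(\tilde v^{**}) = \overline{\mathrm{conv}}\,\mathrm{epi}(\tilde v)$. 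Setting $W(x) := w(x) + \frac{1}{2\delta}|x|^2 = \tilde v^{**}(x)$, the assumption $w(\hat x) < v(\hat x)$ is equivalent to $W(\hat x) < \tilde v(\hat x)$, i.e., $\hat x$ lies off the contact set where the convex envelope meets $\tilde v$.

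Second, I would apply a standard Carath\'eodory argument: the point $(\hat x, W(\hat x))$ on the lower boundary of $\overline{\mathrm{conv}}\,\mathrm{epi}(\tilde v)$ admits a representation
\[
(\hat x, W(\hat x)) = \sum_{i=1}^{k} \lambda_i\,(z_i, \tilde v(z_i)),
\]
with $k \leq N+2$, $\lambda_i > 0$, and $\sum_i \lambda_i = 1$. The strict inequality $W(\hat x) < \tilde v(\hat x)$ rules out the trivial representation ($k=1$, $z_1=\hat x$), so at least two of the $z_i$ are distinct. From $W(\hat x) = \sum_i \lambda_i \tilde v(z_i) \geq \sum_i \lambda_i W(z_i) \geq W(\hat x)$, equality holds throughout; by the rigidity of convex functions at equality, $W$ is then affine on the convex hull of $\{z_1,\ldots,z_k\}$, which has positive dimension.

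Third, I would pick any direction $e = z_i - z_j \neq 0$: affineness of $W$ along the segment through $\hat x$ in direction $e$, together with twice differentiability of $W$ at $\hat x$, forces $D^2 W(\hat x)\,e\cdot e = 0$. Convexity of $W$ gives $D^2 W(\hat x) \geq 0$, and a positive semidefinite matrix annihilating its own quadratic form on $e$ must send $e$ to zero; hence $D^2 W(\hat x)\,e = 0$. Since $D^2 w(\hat x) = D^2 W(\hat x) - \frac{1}{\delta}I$, we conclude $D^2 w(\hat x)\,e = -\frac{1}{\delta}\,e$, exhibiting $-\frac{1}{\delta}$ as an eigenvalue.

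The hard part will be the rigidity step --- confirming that $W$ is genuinely affine along a positive-dimensional set rather than merely having zero curvature in one direction. This is classical convex analysis but requires combining Carath\'eodory with the equality case of Jensen's inequality; everything else, including the crucial identity $W = \tilde v^{**}$, is a direct calculation once the change of variables $p = y/\delta$ is noticed.
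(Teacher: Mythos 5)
Your argument reaches the right conclusion by a genuinely different route from the one the paper relies on. The paper simply cites Proposition~4.4 of \cite{crandall1996equivalence} (for the opposite order of convolutions), which proceeds by the direct geometric observation that when $w(\hat x)<v(\hat x)$ the outer $\sup$ is attained at some $\hat y\neq\hat x$, producing a paraboloid of curvature $1/\delta$ touching $w$ from below at $\hat x$; combined with the $1/\delta$-semiconcavity of $w$, this pins the second derivative of $w$ in the direction $\hat y-\hat x$ at exactly $-1/\delta$. You instead absorb the quadratic kernels into a Fenchel biconjugate $W=\tilde v^{**}$ and read the eigenvalue off a flat face of the convex envelope. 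The identity $w=\tilde v^{**}-\tfrac{1}{2\delta}|x|^2$ is a correct change of variables, and the rigidity argument is in fact the easy part: fixing a subgradient $q\in\partial W(\hat x)$ forces each $z_i$ onto the supporting hyperplane, so $W$ is affine on $\mathrm{conv}(z_1,\dots,z_k)$; since all $\lambda_i>0$, the point $\hat x$ lies in the relative interior, so there is a segment through $\hat x$ in the direction $e=z_i-z_j$ along which $W$ is affine, whence the Alexandrov expansion gives $D^2W(\hat x)e\cdot e=0$ and positive semidefiniteness upgrades this to $D^2W(\hat x)e=0$. So the step you flag as ``the hard part'' is routine; what your approach buys is a transparent conceptual explanation of where the eigenvalue $-1/\delta$ comes from, namely the curvature of the paraboloid absorbed into $\tilde v$.

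The delicate step is actually the one you treat as standard: you need $(\hat x,W(\hat x))$ to admit an \emph{exact} finite representation $\sum_i\lambda_i(z_i,\tilde v(z_i))$ with $\lambda_i>0$, i.e.\ to lie in $\mathrm{conv}\,\mathrm{epi}(\tilde v)$ and not merely in $\overline{\mathrm{conv}}\,\mathrm{epi}(\tilde v)=\mathrm{epi}(\tilde v^{**})$. Carath\'eodory's theorem bounds the number of points \emph{once membership in the convex hull is known}; it does not supply that membership, and for a general continuous $\tilde v$ a lower-boundary point of the closed convex hull of the epigraph need not be a finite convex combination of graph points. What rescues the argument here is that $\tilde v(z)=v(z)+\tfrac{1}{2\delta}|z|^2$ is continuous and quadratically coercive (this is where $v\in BUC(\mathbb{R}^N)$ is used essentially), so the contact set $\Lambda=\{z:\tilde v(z)=W(\hat x)+q\cdot(z-\hat x)\}$ is nonempty and compact, and a short separation argument (perturb $q$ to $q+\epsilon p$ for any would-be separating direction $p$, use coercivity to control the far field, and contradict $W(\hat x)\geq(q+\epsilon p)\cdot\hat x-\tilde v^*(q+\epsilon p)$) shows $\hat x\in\mathrm{conv}(\Lambda)$. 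With that lemma supplied, your proof is complete; without it, the Carath\'eodory step is a genuine gap.
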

	
	\begin{proof}
		This is Proposition 4.4 in \cite{crandall1996equivalence}, save for the order in which the inf- and sup-convolutions are performed. The proof is entirely analogous. 
	\end{proof} 
	
	\begin{proposition}\label{secAppIneq}
		Let $\eta>0$, $0<t_0<t_1<T$, and $u$ be a bounded, lower-semicontinuous supersolution of \eqref{mainEq} in $\Omega\times(0,T)$. Then there exist $\epsilon, \delta, \kappa > 0$ such that $w = (\tilde{u}_{\epsilon + \delta,\kappa})^\delta$, as defined above, satisfies
			\begin{equation}\label{AppIneq}
				w_t +(-\Delta)^s w - |Dw|^p \geq 0\quad \textrm{\emph{a.e.}~in } \Omega^\eta\times(t_0,t_1).
			\end{equation}
	\end{proposition}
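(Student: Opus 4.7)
The plan is to use the semigroup identity (vii) of Proposition~\ref{supInfConvProperties} to write $w = ((u')_\delta)^\delta$ with $u' := \tilde u_{\epsilon,\kappa}$, and then transfer the viscosity supersolution property of $u'$ (given by Lemma~\ref{firstAppIneq}) to a pointwise inequality for $w$. The parameters $\epsilon,\delta,\kappa>0$ are fixed small enough that the cylinder $\Omega^{\epsilon^*}\times(\kappa^*,T-\kappa^*)$ from Lemma~\ref{firstAppIneq}, with an extra margin absorbing the $\sim\sqrt{\delta\|u\|_\infty}$ shift produced by the subsequent sup-convolution, strictly contains $\Omega^\eta\times(t_0,t_1)$. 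Properties (iii), (vi), (viii), and (ix) of Proposition~\ref{supInfConvProperties} then give that $w$ is Lipschitz in $t$, simultaneously $\tfrac{1}{\delta}$-semiconvex and $\tfrac{1}{\epsilon}$-semiconcave in $x$ (hence $C^{1,1}_x$), and satisfies $w\le u'$ pointwise; by Alexandrov and Rademacher, $w$ is twice differentiable in space and differentiable in time at a.e.\ point of $\Omega^\eta\times(t_0,t_1)$.

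A preliminary observation is that the viscosity supersolution property of $u'$ already upgrades to a pointwise a.e.\ inequality $u'_t+(-\Delta)^s u'-|Du'|^p\ge 0$, via the standard touching argument for semiconcave viscosity supersolutions: at any twice-differentiability point of $u'$, the Alexandrov Taylor expansion perturbed by a quadratic absorbing the semiconcavity constant produces a $C^2$ test function touching $u'$ from below, and sending the cutoff $\rho\to 0$ in the integrand split $I[B_\rho]+I[\mathbb R^N\setminus B_\rho]$ of the viscosity inequality recovers the full $(-\Delta)^s u'$.

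At an a.e.\ point $(x_0,t_0)$ of twice-differentiability of $w$ I would distinguish two cases. In \emph{Case~A}, $w(x_0,t_0)=u'(x_0,t_0)$: the global inequality $w\le u'$ with equality at $(x_0,t_0)$ forces $w_t(x_0,t_0)=u'_t(x_0,t_0)$ and $Dw(x_0,t_0)=Du'(x_0,t_0)$ at a.e.\ such point, and the pointwise relation $u'(x_0,t_0)-u'(y,t_0)\le w(x_0,t_0)-w(y,t_0)$ (using $u'(y)\ge w(y)$ together with the Case-A equality) integrated against the positive kernel $|x_0-y|^{-(N+2s)}$ yields $(-\Delta)^s u'(x_0,t_0)\le(-\Delta)^s w(x_0,t_0)$. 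Combined with the pointwise supersolution property of $u'$ at $(x_0,t_0)$, this gives the desired inequality for $w$. In \emph{Case~B}, $w(x_0,t_0)<u'(x_0,t_0)$, so Proposition~\ref{D2eigenProp} forces $D^2w(x_0,t_0)$ to have $-\tfrac{1}{\delta}$ as an eigenvalue. The plan is to estimate $(-\Delta)^s w(x_0,t_0)$ from below using this specific eigenvalue, together with the upper bound $\tfrac{1}{\epsilon}$ on the remaining eigenvalues (from (ix)), to obtain a bound of the form $(-\Delta)^s w(x_0,t_0)\ge c(N,s)/\delta-C(N,s,\epsilon)$; since (iii) gives $|Dw|\le 2\|u\|_\infty/\sqrt{\epsilon+\delta}$ and $|w_t|\le 2\|u\|_\infty/\sqrt\kappa$ bounded independently of $\delta$, choosing $\delta$ small compared to a suitable power of $\epsilon$ closes the case.

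The main obstacle is the Case-B lower bound on $(-\Delta)^s w$: because the $C^{1,1}$-seminorm of $w$ is itself of order $\tfrac{1}{\delta}$, the naive Taylor-remainder contribution to the symmetric second-difference representation of $(-\Delta)^s w(x_0,t_0)$ is of the same order $\tfrac{1}{\delta}$ as the main term, and one must carefully balance its sign to extract a genuinely positive bound. The plan is to isolate the contribution of the $-\tfrac{1}{\delta}$-eigendirection $e = (\hat y - x_0)/|\hat y - x_0|$ by exploiting the exact parabolic profile $w(x_0+te)=w(x_0)+t\,Dw(x_0)\cdot e - t^2/(2\delta)$ coming from the sup-convolution structure (rather than a generic $C^{1,1}$-bound), and only then use the two-sided Hessian bound in the orthogonal directions to control the remaining contribution.
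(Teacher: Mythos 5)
Your proposal follows essentially the same route as the paper: fix $\epsilon,\kappa$ so that Lemma~\ref{firstAppIneq} applies in a cylinder containing $\Omega^\eta\times(t_0,t_1)$, use $w\leq\tilde{u}_{\epsilon,\kappa}$ together with a.e.~parabolic twice-differentiability of $w$, and then split into the two cases $w=\tilde u_{\epsilon,\kappa}$ and $w<\tilde u_{\epsilon,\kappa}$, handling the latter via the forced eigenvalue $-1/\delta$ of $D^2w$ from Proposition~\ref{D2eigenProp}. Your Case~A is phrased slightly differently (you first upgrade $\tilde u_{\epsilon,\kappa}$ to a pointwise a.e.~supersolution and then transfer the inequality via the kernel monotonicity $(-\Delta)^s \tilde u_{\epsilon,\kappa}\le(-\Delta)^s w$ at touching points, whereas the paper uses the Taylor polynomial of $w$ directly as a test function for $\tilde u_{\epsilon,\kappa}$), but these are equivalent; note you should restrict Case~A to the full-measure set where $\tilde u_{\epsilon,\kappa}$ itself has an Alexandrov expansion, which costs nothing.

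One remark on Case~B: the ``main obstacle'' you identify is not actually an obstacle in the paper's argument, and you do not need the exact paraboloid profile of the sup-convolution along the eigendirection. The Taylor remainder is $o(|z|^2)$ pointwise at the chosen point, and over $B_\gamma$ it contributes at most $\epsilon'\gamma^{2-2s}\cdot C(N,s)$ once $\gamma$ is small enough relative to $\epsilon'$; since $\gamma$ is an auxiliary parameter taken to zero \emph{after} $\epsilon,\delta,\kappa$ are fixed, the remainder can be bounded by an absolute constant (the paper absorbs it into ``$-1$''), independently of $1/\delta$. The far-field term is bounded by $-C_0$ using $\|w\|_\infty\le\|u\|_\infty+1$, and then choosing $\delta$ small relative to $\epsilon,\kappa$ closes the estimate. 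Your alternative (exploiting the segment on which $w$ coincides with a downward paraboloid) would also work, but it requires establishing that structural fact and splitting the near-field integral along that segment; the paper's simpler $\gamma\to0$ argument suffices.
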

	
	\begin{proof}
		Let $\epsilon$ and $\kappa$ be respectively so small that $\epsilon^*<\eta$ and ${\kappa^*<\min\{t_0, T-t_1\}}$ (see \eqref{epsStar}). This implies that $\Omega^\eta\subset\Omega^{\epsilon^*}$ and $(t_0,t_1)\subset(\kappa^*, T-\kappa^*)$. Applying Lemma \ref{firstAppIneq}, we have that $\tilde{u}_{\epsilon,\kappa}$ is a viscosity supersolution of \eqref{mainEq} in $\Omega^{\eta}\times(t_0,t_1)$. From Proposition \ref{supInfConvProperties} \eqref{doubleconvineq}, we know that $w\leq \tilde{u}_{\epsilon,\kappa}$ in $\mathbb{R}^N\times[0,T]$, and from Proposition \ref{supInfConvProperties} \eqref{twicediff} and \eqref{preservSemiconcavity}, that $w$ has a second order ``parabolic'' Taylor expansion a.e.~in $\Omega^{\eta}\times(t_0,t_1)$. Let $(\hat{x}, \hat{t})\in \Omega^{\eta}\times(t_0,t_1)$ be any point where such an expansion exists.
		
		Assume first that $w(\hat{x}, \hat{t}) = \tilde{u}_{\epsilon,\kappa}(\hat{x},\hat{t})$. In this case, Proposition \ref{supInfConvProperties} \eqref{twicediff} implies that
			\begin{equation*}
				\phi(x,t) = w(\hat{x}, \hat{t}) + w_t(\hat{x}, \hat{t})(t-\hat{t}) + \langle Dw(\hat{x}, \hat{t}), x-\hat{x} \rangle + \frac{1}{2}\langle D^2w(\hat{x}, \hat{t})(x-\hat{x}), x-\hat{x}\rangle 
			\end{equation*}
		satisfies $\phi(x,t) \leq \tilde{u}_{\epsilon,\kappa}$ over $B_\gamma(\hat{x})\times(\hat{t} - \gamma, \hat{t} + \gamma)$ for some $\gamma>0$. As $\phi\in C^2$, it is a valid test function in the sense of Definition \ref{defSoln} (see this Definition for the notation that follows). Since $\tilde{u}_{\epsilon,\kappa}$ is a viscosity supersolution, this implies $E_\gamma(\tilde{u}_{\epsilon,\kappa},\phi,\hat{x},\hat{t}) \geq 0$. As $w$ is $C^{1,1}$ with respect to $x$, $I[B_\gamma(\hat{x})](w(\hat{x},\hat{t}))$ is classically defined. Moreover, the error from the second order expansion results in a summable term over $B_\gamma$, hence
		\begin{equation}\label{expansionErrorCtrl}
			I[B_\gamma(\hat{x})](w(\hat{x},\hat{t})) = I[B_\gamma(\hat{x})](\phi(\hat{x},\hat{t})) + o(1)\textrm{ as }\gamma \to 0.
		\end{equation}
		On the other hand, ${w\leq\tilde{u}_{\epsilon,\kappa}}$ implies that 
		\begin{equation*}
			I[\mathbb{R}^N\backslash B_\gamma(\hat{x})](w(\hat{x},\hat{t})) \geq I[\mathbb{R}^N\backslash B_\gamma(\hat{x})](\tilde{u}_{\epsilon,\kappa}(\hat{x},\hat{t})). 
		\end{equation*}
		Putting this together, we have
			\begin{equation*}
				w_t(\hat{x},\hat{t}) +(-\Delta)^s w(\hat{x},\hat{t}) - |Dw(\hat{x},\hat{t})|^p + o(1)\geq E_\gamma(\tilde{u}_{\epsilon,\kappa},\phi,\hat{x},\hat{t}) \geq 0.
			\end{equation*}
		We thus obtain \eqref{AppIneq} at $(\hat{x},\hat{t})$ by taking $\gamma\to 0$.
		
		Assume now that $w(\hat{x}, \hat{t}) < \tilde{u}_{\epsilon,\kappa}(\hat{x},\hat{t})$. In this case, by Proposition \ref{D2eigenProp}, $D^2w(\hat{x}, \hat{t})$ has an eigenvalue equal to $-\frac{1}{\delta}$. Denoting the eigenvalues of $D^2w(\hat{x}, \hat{t})$ by $e_i,\ i=1,\ldots, N$, without loss of generality we may write $e_1=-\frac{1}{\delta}$. Recall from Proposition \ref{supInfConvProperties}, \eqref{hessBounds} and \eqref{preservSemiconcavity}, that $e_i\leq \frac{1}{\epsilon}$ for $i=2,\ldots, N$. Since $D^2w(\hat{x},\hat{t})$ is symmetric, there exists an orthogonal matrix $A$ such that $D^2w(\hat{x},\hat{t}) = A^T\textrm{diag}(e_1,\ldots,e_N)A$. Therefore, writing $y=Az$, we have
			\begin{align*}
				\langle D^2w(\hat{x}, \hat{t}) z, z\rangle &{}= \langle A^T\textrm{diag}(e_1,\ldots,e_N)A z, z\rangle  = \langle \textrm{diag}(e_1,\ldots,e_N)Az, Az\rangle \\
				&{}= \sum_{i=1}^{N} e_i\, y_i^2 \leq -\frac{1}{\delta} y_1^2 + \sum_{i=2}^{N} \frac{1}{\epsilon} y_i^2.		
			\end{align*}
		We then use an alternative form of the fractional Laplacian (see, e.g., \cite{di2012hitchhikers}) and the second-oder expansion at $\hat{x}$ to compute, for a sufficiently small $\gamma>0$,
			\begin{align*}
				&(-\Delta)^s w(\hat{x}, \hat{t}) = -\frac{C_{N,s}}{2} \int_{\mathbb{R}^N} \frac{w(\hat{x} + z,\hat{t}) + w(\hat{x} - z,\hat{t}) - 2w(\hat{x},\hat{t})}{|z|^{N+2s}} \,dz\\
				&\quad = -\frac{C_{N,s}}{2} \int_{B_\gamma(0)} \frac{\langle D^2w(\hat{x}, \hat{t}) z, z\rangle + o(|z|^2)}{|z|^{N+2s}} \,dz \\
				&\qquad -\frac{C_{N,s}}{2} \int_{\mathbb{R}^N\backslash B_\gamma(0)}\frac{w(\hat{x} + z,\hat{t}) + w(\hat{x} - z,\hat{t}) - 2w(\hat{x},\hat{t})}{|z|^{N+2s}} \,dz\\
				&\quad =: I_1 + I_2.
			\end{align*}
		The uniform convergence of the regularization (Proposition \ref{supInfConvProperties} \eqref{unifConverApp}) we have, taking sufficiently small regularization parameters, that $\|w\|_\infty \leq \|u\|_\infty + 1$, hence
			\begin{align*}
				I_2 &{}\geq -2C_{N,s}\|w\|_\infty\int_{\mathbb{R}^N\backslash B_\gamma(0)}\frac{1}{|z|^{N+2s}}\,dz\\
				&{} \geq -2C_{N,s}(\|u\|_\infty + 1)\int_{\mathbb{R}^N}\frac{1}{|z|^{N+2s}}\,dz =: -C_0.
			\end{align*}
			
		For $I_1$, the error term from the expansion is controlled as in \eqref{expansionErrorCtrl}. Using also that the change of variables $y=Az$ is orthogonal, we compute
			\begin{align*}
				I_1 &{}\geq -\frac{C_{N,s}}{2} \int_{B_\gamma(0)} \frac{\langle D^2w(\hat{x}, \hat{t}) z, z\rangle}{|z|^{N+2s}} \,dz - 1\\
				&{}\geq \frac{C_{N,s}}{2}\left(\frac{1}{\delta}\int_{B_\gamma(0)} \frac{y_1^2} {|y|^{N+2s}} \,dy -\frac{1}{\epsilon}\sum_{i=2}^{N}\int_{B_\gamma(0)} \frac{y_i^2}{|y|^{N+2s}} \,dy\right) - 1
			\end{align*}
		A standard computation shows that, for $i=1,\ldots, N$,
			\begin{equation*}
				0 < \int_{\mathbb{R}^N} \frac{y_i^2}{|y|^{N+2s}} \,dy =: C_1 < \infty.
			\end{equation*}
		Thus, combining the above estimates we have
			\begin{equation*}
				(-\Delta)^s w(\hat{x}, \hat{t}) \geq \frac{C_{N,s}C_1}{2}\left(\frac{1}{\delta}-\frac{N-1}{\epsilon}\right) - C_0 - 1
			\end{equation*}	
		Recalling the bounds for $w_t$ and $Dw$ given in Proposition \ref{supInfConvProperties} \eqref{supinflipschitz}, we finally obtain
			\begin{align*}
				& w_t(\hat{x},\hat{t}) +(-\Delta)^s w(\hat{x},\hat{t}) - |Dw(\hat{x},\hat{t})|^p\\
				&\quad \geq -\frac{K}{\kappa^{\frac{1}{2}}} + \frac{C_{N,s}C_1}{2}\left(\frac{1}{\delta} - \frac{N-1}{\epsilon}\right) - \frac{K^p}{\epsilon^{\frac{p}{2}}} - C_0 - 1.
			\end{align*}
		Therefore, taking $\delta$ sufficiently small with respect to $\kappa$ and $\epsilon$, the right-hand side of the above inequality becomes nonnegative. Hence, we obtain \eqref{AppIneq} at $(\hat{x},\hat{t})$ once more, and the result follows.
	\end{proof}
	
	\begin{remark}
		The first part of the proof of Proposition \ref{secAppIneq}, i.e., working under the assumption $w(\hat{x},\hat{t})=\tilde{u}_{\epsilon,\kappa}(\hat{x},\hat{t})$, amounts to showing the equivalence of using punctually-$C^{1,1}$ test functions instead of $C^2$ in Definition \ref{defSoln}. See Lemma 4.3 in \cite{caffarelli2009regularity} for the time-independent case.
	\end{remark}

\subsection{The principal eigenvalue problem for the Dirichlet fractional Laplacian}\label{subsec:stability}
In this section we provide some results regarding the principal eigenvalue problem for the fractional Laplacian on domains approximating $\Omega$, i.e.,
\begin{equation}\label{fracLapEigenval}
	\left\{\begin{array}{ll}
		(-\Delta)^s\varphi = \lambda \varphi & \textrm{in } \Omega^\eta,\\
		\varphi = 0 & \textrm{in } \mathbb{R}^N\backslash\Omega^\eta.
	\end{array}\right.
\end{equation}

The existence of a solution pair $(\lambda_1^\eta, \varphi_1^\eta)$ of \eqref{fracLapEigenval} where $\lambda_1^\eta>0$, and $\varphi_1^\eta$ is nonnegative in $\Omega^\eta$ and unique up to a multiplicative constant is proved in \cite{servadei2013variational}, Proposition 9. The solution obtained in this work is in the weak, or variational, sense. In particular, $\varphi_1^\eta \in H^s(\Omega^\eta)$. Furthermore, in \cite{servadei2013brezis}, Proposition 4, it is proved that $\varphi_1^\eta \in L^\infty(\Omega^\eta).$ We set
	\begin{equation}\label{normaliz}
		\|\varphi_1^\eta\|_\infty=1 \quad\textrm{for all }\eta>0.
	\end{equation} 
From here, it is possible to apply the results of \cite{ros2014dirichlet} to obtain that $\varphi_1^\eta \in C^s(\mathbb{R}^N)$. Once the ``right-hand side'' of \eqref{fracLapEigenval} is continuous, the notions of weak and viscosity solution coincide (see \cite{ros2014dirichlet}, Remark 2.11). Moreover, ``bootstrapping'' the results contained in \cite{ros2014dirichlet} (see also \cite{caffarelli2009regularity}), the solution can be shown to be regular enough in the interior of $\Omega^\eta$ for \eqref{fracLapEigenval} to hold in a classical, pointwise sense.

Additionally, since $\partial\Omega^\eta$ is smooth (see Remark \ref{choiceEta0}), it can be shown that as a consequence of Hopf's lemma and the strong maximum principle (\cite{greco2016hopf}) that

\begin{equation}\label{lambda1}
	\lambda_1^\eta = \sup \{\lambda > 0 \ | \ \exists\, \varphi > 0 \textrm{ in } \Omega^\eta \textrm{ such that } (-\Delta)^s\varphi \geq \lambda \varphi\}.
\end{equation}

\begin{remark}\label{choiceEta0}
	Towards the proof of our main result, our aim is to provide estimates that remain uniform with respect to the varying domain (i.e, independent of $\eta$). To this end, we recall a few basic facts concerning the geometry of the domains $\Omega^\eta$, $\eta>0$. First, since $\Omega$ is $C^2$ by assumption, there exists an $\eta_0>0$ such that the distance function $d|_{\Omega\backslash\overline{\Omega}\,\!^{2\eta_0}}$ is $C^2$; in particular $\Omega^\eta$ is $C^2$ for all $\eta\in (0,2\eta_0)$.
\end{remark}

The following is a classical result from the geometry of hypersurfaces.

\begin{proposition}\label{parallelCurv}
	Let $\kappa_i$ and $\kappa^\eta_i$, $i=1,\ldots, N-1,$ denote the principal curvatures of $\partial\Omega$ at $y_0\in\partial\Omega$ and of $\partial\Omega^\eta$ at $y = y_0 + \eta\nu(y_0)$, respectively. Then
	\begin{equation}\label{parallelCurveEq}
	\kappa_i^\eta = \frac{\kappa_i}{1 - \eta\kappa_i}.
	\end{equation}
\end{proposition}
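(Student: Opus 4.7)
The plan is to carry out the classical parallel-surface computation from the differential geometry of hypersurfaces in $\mathbb{R}^N$. First, I would parameterize $\partial\Omega^\eta$ by the normal map
\[
\Phi_\eta:\partial\Omega \longrightarrow \partial\Omega^\eta, \qquad \Phi_\eta(y_0) = y_0+\eta\,\nu(y_0),
\]
where $\nu$ denotes the inward unit normal field to $\partial\Omega$ (pointing into $\Omega$, so that $y\in\Omega$ and $d(y)=\eta$). By Remark~\ref{choiceEta0}, $d$ is $C^2$ in a neighborhood of $\partial\Omega$ and thus $\nu$ is $C^1$; since the principal curvatures $\kappa_i$ are bounded and $\eta\kappa_i<1$ for small enough $\eta$, the map $\Phi_\eta$ is a $C^1$ diffeomorphism.

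Next, I would fix at $y_0$ an orthonormal basis $\{e_1,\ldots,e_{N-1}\}$ of $T_{y_0}\partial\Omega$ consisting of principal directions, characterized by the Weingarten identity $D_{e_i}\nu = -\kappa_i\,e_i$ (with the sign convention that the $\kappa_i$ are positive for a sphere viewed from its center, consistent with the inward-pointing $\nu$). Differentiating $\Phi_\eta$ then gives
\[
d\Phi_\eta(e_i) = e_i + \eta\,D_{e_i}\nu = (1-\eta\kappa_i)\,e_i,
\]
so the pushed-forward vectors $\tilde e_i := (1-\eta\kappa_i)\,e_i$ span $T_y\partial\Omega^\eta$. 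In particular, $T_y\partial\Omega^\eta$ and $T_{y_0}\partial\Omega$ coincide as subspaces of $\mathbb{R}^N$ under the obvious translation, so the inward unit normal to $\partial\Omega^\eta$ at $y$ is again $\nu(y_0)$. Writing the normal field along $\partial\Omega^\eta$ as $\tilde\nu := \nu\circ\Phi_\eta^{-1}$, the chain rule yields
\[
D_{\tilde e_i}\tilde\nu = D_{e_i}\nu = -\kappa_i\,e_i = -\frac{\kappa_i}{1-\eta\kappa_i}\,\tilde e_i.
\]
Reading this as the Weingarten identity for $\partial\Omega^\eta$ at $y$, I conclude that the vectors $\tilde e_i$ are principal directions and that the corresponding principal curvatures are $\kappa_i^\eta = \kappa_i/(1-\eta\kappa_i)$, which is exactly \eqref{parallelCurveEq}.

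Since this is a textbook-level computation, I do not expect any serious obstacle. The only points that require attention are the sign convention for $\nu$ (needed to match the sign in \eqref{parallelCurveEq}, as one can sanity-check on a sphere of radius $R$, whose inner parallel at distance $\eta$ has radius $R-\eta$ and curvature $1/(R-\eta) = \kappa/(1-\eta\kappa)$) and the correct application of the chain rule in the last display, where $\tilde\nu$ must be differentiated along tangent vectors to $\partial\Omega^\eta$ at $y$ rather than along tangent vectors to $\partial\Omega$ at $y_0$.
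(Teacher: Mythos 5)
Your proof is correct and is exactly the classical parallel-hypersurface computation that the paper outsources to its citation of Hicks (the paper's entire ``proof'' is the reference, with no details given). You have simply written out what the cited textbook contains: pushing principal directions forward under $\Phi_\eta(y_0)=y_0+\eta\nu(y_0)$, noting that the normal is preserved, and reading off the new principal curvatures from the Weingarten relation; the sign check on the sphere confirms the convention matches \eqref{parallelCurveEq}.
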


\begin{proof}
	See, e.g., \cite{hicks1965notes}, Chap.~2.
\end{proof}

\begin{remark}\label{uniUniExtBall}
	In particular, a $C^2$ domain satisfies an exterior uniform sphere condition. Furthermore, at any given point of the boundary, the radius of the exterior tangent sphere is bounded by below by the smallest of the radii of curvature, which are equal to the inverses of the principal curvatures. Proposition \ref{parallelCurv} allows us to extend the uniform exterior sphere condition to domains close to $\Omega$ \emph{in a uniform way}. More precisely, there exists a positive constant $\rho_0$, depending only on $\Omega$ and $\eta_0$, as given by Remark \ref{choiceEta0}, such that for all $\eta\in (0,\eta_0)$, and for all $y\in\partial\Omega^\eta$, there exists $y_1\in\mathbb{R}^N\backslash\Omega^\eta$ such that $\overline{B_{\rho_0}(y_1)}\cap\overline{\Omega^\eta} = \{y\}.$
\end{remark}

\subsubsection{Stability of eigenfunctions}

\begin{theorem}\label{globalReg}
	Let $\eta_0$ be as in Remark \ref{choiceEta0}. Then, there exists $C$ depending only on $\Omega,\ N$, and $s$, such that, for all $\eta\in(0,\eta_0)$, the positive solution of $\eqref{fracLapEigenval}$, normalized as above, satisfies
		\begin{equation*}
			\|\varphi_1^\eta\|_{C^s(\mathbb{R}^N)}\leq C.
		\end{equation*}
\end{theorem}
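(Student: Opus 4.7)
The plan is to reduce the eigenvalue problem to a Dirichlet problem with bounded right-hand side and then apply the global $C^s$ regularity of \cite{ros2014dirichlet}, making sure that every constant that enters depends on $\Omega^\eta$ only through geometric quantities for which we already have $\eta$-independent control.

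First, I would establish a uniform upper bound on the principal eigenvalues $\lambda_1^\eta$. For any $\eta\in(0,\eta_0)$ we have the inclusion $\Omega^{\eta_0}\subset\Omega^\eta$, and so the variational (or sup-) characterization \eqref{lambda1} gives the monotonicity $\lambda_1^\eta\leq \lambda_1^{\eta_0}=:\Lambda$, where $\Lambda$ depends only on $\Omega$, $\eta_0$, $N$, $s$. Combined with the normalization \eqref{normaliz}, this yields
\begin{equation*}
	\|\lambda_1^\eta\varphi_1^\eta\|_{L^\infty(\Omega^\eta)} \leq \Lambda \quad\textrm{for all } \eta\in(0,\eta_0).
\end{equation*}

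Second, I would view $\varphi_1^\eta$ as the (unique, nonnegative) solution of the Dirichlet problem
\begin{equation*}
	(-\Delta)^s \varphi_1^\eta = f_\eta \textrm{ in } \Omega^\eta, \qquad \varphi_1^\eta = 0 \textrm{ in } \mathbb{R}^N\setminus\Omega^\eta,
\end{equation*}
with $f_\eta := \lambda_1^\eta\varphi_1^\eta\in L^\infty$, and apply the global H\"older estimate of \cite{ros2014dirichlet} (their Proposition 1.1 / Theorem 1.2 type estimate) to obtain
\begin{equation*}
	\|\varphi_1^\eta\|_{C^s(\mathbb{R}^N)} \leq C_\eta\bigl(\|f_\eta\|_{L^\infty(\Omega^\eta)} + \|\varphi_1^\eta\|_{L^\infty(\Omega^\eta)}\bigr) \leq C_\eta(\Lambda+1).
\end{equation*}

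The core of the argument, and the main obstacle, is to show that the constant $C_\eta$ can be chosen independently of $\eta$. Inspection of the proofs in \cite{ros2014dirichlet} shows that $C_\eta$ depends on $\Omega^\eta$ only through $N$, $s$, the diameter of $\Omega^\eta$, and the radius appearing in the exterior ball condition. The first is uniformly controlled by $\mathrm{diam}(\Omega)$, since $\Omega^\eta\subset\Omega$. The second is exactly what Remark \ref{uniUniExtBall} provides: for every $\eta\in(0,\eta_0)$ the domain $\Omega^\eta$ satisfies a uniform exterior ball condition with common radius $\rho_0=\rho_0(\Omega,\eta_0)$. Consequently $C_\eta\leq C$ for a constant depending only on $\Omega$, $N$ and $s$.

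Combining these three steps gives $\|\varphi_1^\eta\|_{C^s(\mathbb{R}^N)}\leq C(\Lambda+1)$ with $C$ independent of $\eta\in(0,\eta_0)$, which is the claim. The delicate point is not the regularity theory itself but the verification that the constants in \cite{ros2014dirichlet} only involve geometric data that survive the passage $\eta\to 0$; once Remark \ref{uniUniExtBall} is invoked, the rest is essentially bookkeeping.
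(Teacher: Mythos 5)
Your proposal follows essentially the same route as the paper: bound $\lambda_1^\eta$ uniformly via the monotonicity from \eqref{lambda1}, treat $\varphi_1^\eta$ as the solution of a Dirichlet problem with right-hand side $\lambda_1^\eta\varphi_1^\eta\in L^\infty$, and apply Proposition 1.1 of \cite{ros2014dirichlet}, with the crux being that the constant there depends on $\Omega^\eta$ only through quantities controlled uniformly in $\eta$ (diameter, exterior ball radius via Remark \ref{uniUniExtBall}). The paper devotes Appendix \ref{CsReg} to tracking these constants through the chain of lemmas in \cite{ros2014dirichlet}; your ``inspection'' step is exactly that bookkeeping, so the argument is correct and matches the paper's.
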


\begin{proof}
	Theorem \ref{globalReg} follows readily from the corresponding estimate for the Dirichlet problem. Indeed, we apply Proposition 1.1 from \cite{ros2014dirichlet} to the solution of \eqref{fracLapEigenval}, recalling the normalization \eqref{normaliz}, and obtain
	\begin{equation}\label{phi1Cs}
			\|\varphi_1^\eta\|_{C^s(\mathbb{R}^N)}\leq C_2\|\lambda_1^\eta\varphi_1^\eta\|_{L^\infty(\mathbb{R}^N)} = C_2\lambda_1^\eta \leq C_2\lambda_1^{\eta_0}
	\end{equation}
	for some positive $C_2$ depending on $N$, $s$, and $\Omega^\eta$. For the last inequality we used the fact that $\Omega^{\eta_0}\subset\Omega^\eta$ for $\eta<\eta_0$, and therefore $\lambda_1^\eta \leq \lambda_1^{\eta_0}$, by \eqref{lambda1}.
	
	It remains only to verify that, once $\eta_0$ is fixed, the constant $C_2=C_2(N,s,\Omega^\eta)$ in \eqref{phi1Cs} (i.e., the constant in Proposition 1.1 from \cite{ros2014dirichlet}) can be taken uniformly for $\eta\in(0,\eta_0)$. We perform this analysis in Appendix \ref{CsReg}.
\end{proof}


\begin{theorem}[Stability of eigenvalues]\label{stability}
	Let $\lambda_1$ and $\varphi_1$ denote the principal eigenvalue for $(-\Delta)^s$ and the associated eigenfunction with $\varphi_1>0$ in $\Omega$ and $\|\varphi_1\|_\infty = 1$, respectively. Then, as $\eta\to0$, $\lambda_1^\eta\to\lambda_1$ and $\varphi_1^\eta\to\varphi_1$ uniformly in $\overline{\Omega}$.
\end{theorem}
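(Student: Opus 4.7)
The plan is to combine a monotonicity argument for the eigenvalues with a compactness-and-identification argument for the eigenfunctions, using Theorem \ref{globalReg} as the key uniform estimate.

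\textbf{Step 1: Monotonicity and subsequential compactness.} The sets $\Omega^\eta$ are monotone in $\eta$: if $0 < \eta_1 < \eta_2 < \eta_0$ then $\Omega^{\eta_2} \subset \Omega^{\eta_1} \subset \Omega$. Using the characterization \eqref{lambda1}, any $\varphi > 0$ witnessing $\lambda \leq \lambda_1^{\eta_1}$ (i.e., $(-\Delta)^s \varphi \geq \lambda \varphi$ in $\Omega^{\eta_1}$) restricts to a witness for $\Omega^{\eta_2}$, so $\lambda_1^{\eta_1} \leq \lambda_1^{\eta_2}$; symmetrically, $\varphi_1$ itself shows $\lambda_1 \leq \lambda_1^\eta$. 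Hence $\eta \mapsto \lambda_1^\eta$ is non-decreasing in $\eta$ and bounded below by $\lambda_1$, so $\lambda^* := \lim_{\eta \to 0^+} \lambda_1^\eta$ exists with $\lambda_1 \leq \lambda^* \leq \lambda_1^{\eta_0}$. Theorem \ref{globalReg} together with the normalization \eqref{normaliz} provides a uniform bound $\|\varphi_1^\eta\|_{C^s(\mathbb{R}^N)} \leq C$, and each $\varphi_1^\eta$ vanishes outside $\Omega$. By Arzel\`a--Ascoli, any sequence $\eta_k \to 0$ admits a subsequence (not relabeled) along which $\varphi_1^{\eta_k} \to \varphi^*$ uniformly on $\mathbb{R}^N$, with $\varphi^* \in C^s(\mathbb{R}^N)$, $\varphi^* \geq 0$, and $\varphi^* \equiv 0$ outside $\Omega$.

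\textbf{Step 2: Non-triviality and viscosity limit.} Picking $x_k \in \overline{\Omega^{\eta_k}}$ with $\varphi_1^{\eta_k}(x_k) = 1$ and extracting $x_k \to x^* \in \overline{\Omega}$, uniform convergence gives $\varphi^*(x^*) = 1$, so $\varphi^* \not\equiv 0$. The right-hand sides $\lambda_1^{\eta_k} \varphi_1^{\eta_k}$ converge uniformly to $\lambda^* \varphi^*$, so by the standard stability of viscosity solutions to nonlocal equations under uniform convergence, $\varphi^*$ is a nonnegative viscosity solution of $(-\Delta)^s \varphi^* = \lambda^* \varphi^*$ in $\Omega$ with $\varphi^* = 0$ in $\mathbb{R}^N \setminus \Omega$. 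The strong maximum principle for $(-\Delta)^s$ then forces $\varphi^* > 0$ in $\Omega$.

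\textbf{Step 3: Uniqueness and full convergence.} Since the Dirichlet principal eigenpair $(\lambda_1, \varphi_1)$ for $(-\Delta)^s$ on $\Omega$ is the only eigenpair admitting a strictly positive eigenfunction (up to scalar multiplication), we conclude $\lambda^* = \lambda_1$, and the normalization $\|\varphi^*\|_\infty = 1 = \|\varphi_1\|_\infty$ then forces $\varphi^* = \varphi_1$. Because the limit is the same regardless of the chosen sequence $\eta_k \to 0$, the full family converges: $\lambda_1^\eta \to \lambda_1$ and $\varphi_1^\eta \to \varphi_1$ uniformly on $\overline{\Omega}$.

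The main obstacle I anticipate is the viscosity stability argument in Step 2 at points close to $\partial\Omega$: each $\varphi_1^{\eta_k}$ vanishes on the expanding collar $\Omega \setminus \Omega^{\eta_k}$, so to obtain the limiting equation throughout $\Omega$ one must pass test-function information across this shrinking strip. The uniform $C^s(\mathbb{R}^N)$-bound from Theorem \ref{globalReg} is what controls the nonlocal tails uniformly in $\eta$, enabling the standard half-relaxed-limits machinery to close the argument.
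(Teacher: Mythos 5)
Your proposal follows essentially the same route as the paper: monotonicity of $\lambda_1^\eta$ via \eqref{lambda1}, compactness from the uniform $C^s$ bound of Theorem \ref{globalReg} together with the normalization \eqref{normaliz}, viscosity stability to identify the limit as an eigenfunction, the strong maximum principle for positivity, and uniqueness of the positive eigenpair to conclude, with subsequence-independence giving full convergence. The ``obstacle'' you flag in Step 2 is handled in the paper simply by exhausting $\Omega$ with compact subsets $U \subset\subset \Omega$ (so that $U \subset \Omega^\eta$ for small $\eta$, and the equation holds in $U$ for all such $\eta$), which makes the stability argument routine and avoids any subtlety near $\partial\Omega$.
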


\begin{proof}
	By \eqref{lambda1}, we have that $\lambda_1^\eta\to\hat{\lambda}$ for some $\hat{\lambda}\geq\lambda_1$. Using Theorem \ref{globalReg} and \eqref{normaliz}, by compactness we have that $\varphi_1^\eta\to\hat{\varphi}$ for some $\hat{\varphi}\in C(\mathbb{R}^N)$, locally uniformly in $\mathbb{R}^N$. In particular, $\varphi_1^\eta\to\hat{\varphi}$ uniformly in $\overline{\Omega}$, and $\hat{\varphi}\equiv0$ in $\mathbb{R}^N\backslash\Omega$. Also, we have $\hat{\varphi}\geq0$ and, from $\eqref{normaliz}$, that $\|\hat{\varphi}\|_\infty=1$. In particular, $\hat{\varphi}\not\equiv 0$.
	
	Let $U\subset\subset\Omega$ and $\eta'>0$ small enough so that $U\subset\Omega^{\eta'}$ (hence, by \eqref{lambda1}, $U\subset\subset\Omega^\eta$ for all $\eta<\eta'$). Then the equation in $\eqref{fracLapEigenval}$ is satisfied in $U$ for all $\eta<\eta'$. Together with the considerations of the preceding paragraph, by the stability of viscosity solutions (see, e.g., Corollary 4.6 in \cite{caffarelli2009regularity}), we have that $(-\Delta)^s\hat{\varphi} = \hat{\lambda}\hat{\varphi}$ in $U$. Furthermore, by the strong maximum principle, 
	we have that $\hat{\varphi}>0$ in $U$. Since the choice of $U$ was arbitrary, the above argument implies that
	\begin{equation*}
		\left\{\begin{array}{ll}
		(-\Delta)^s\hat{\varphi} = \hat{\lambda} \hat{\varphi} & \textrm{in } \Omega,\\
		\hat{\varphi} = 0 & \textrm{in } \mathbb{R}^N\backslash\Omega.
		\end{array}\right.
	\end{equation*}
	By \eqref{lambda1}, this implies $\lambda_1\geq\hat{\lambda}$. Hence, $\hat{\lambda}=\lambda_1$, and, by uniqueness, $\hat{\varphi}=\varphi_1.$
\end{proof}

\begin{remark}\label{uniLowBoundVarphi}
	The following is a consequence of Lemma \ref{stability} that will be useful later on: given $K\subset\subset\Omega$ and $\eta'>0$ small enough, there exists a positive constant $c$, depending on $K$ and $\eta'$, such that, for all $\eta\in(0,\eta')$, $\varphi_1^\eta(x)>c$ for all $x\in K$.
\end{remark}

\subsubsection{A uniform Hopf's lemma}

\begin{lemma}\label{uniBarrier}
	Let $\eta_0$ be as in Remark \ref{choiceEta0}. Then, there exists $C_3$, depending only on $N, s,$ and $\Omega$, such that, for all $\eta\in (0,\eta_0)$,
	\begin{equation}\label{uniBarrierEq}
		(-\Delta)^s d_\eta(x)^s = f_\eta(x) \quad\textrm{for all } x \in \Omega,\ \eta<d(x)<2\eta_0,
	\end{equation}
	for some $f_\eta\in L^\infty(\Omega^\eta\backslash\overline{\Omega}\,\!^{2\eta_0})$ with $\|f_\eta\|_\infty\leq C_3$.
\end{lemma}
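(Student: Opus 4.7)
The plan is to reduce Lemma \ref{uniBarrier} to the classical fact that $(-\Delta)^s d_D^s \in L^\infty$ in a neighborhood of $\partial D$ for any bounded $C^2$ domain $D$, and then to check that, with $D = \Omega^\eta$, the resulting bound can be taken uniformly in $\eta \in (0, \eta_0)$. Uniform control of the geometry is already in place: by Remark \ref{choiceEta0}, $\Omega^\eta$ is a $C^2$ domain for each $\eta \in (0, \eta_0)$; by Proposition \ref{parallelCurv}, its principal curvatures satisfy $|\kappa_i^\eta| \leq \kappa^*/(1 - \eta_0 \kappa^*)$, where $\kappa^*$ is an upper bound for the principal curvatures of $\partial\Omega$; and by Remark \ref{uniUniExtBall}, $\Omega^\eta$ satisfies a uniform exterior sphere condition of radius $\rho_0 > 0$. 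Consequently, local $C^2$ parametrizations of $\partial\Omega^\eta$ can be chosen with $C^2$-norms bounded uniformly in $\eta$.

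Fix $x \in \Omega$ with $\eta < d(x) < 2\eta_0$, set $\delta = d_\eta(x) = d(x) - \eta$, and let $x_0$ denote the unique projection of $x$ onto $\partial \Omega^\eta$; well-definedness of $x_0$ follows from the smoothness of $d$ on $\Omega \setminus \overline{\Omega}\,\!^{2\eta_0}$. Let $\nu = \nu(x_0)$ be the outward unit normal to $\Omega^\eta$ at $x_0$, and consider the tangent half-space $H = \{y \in \mathbb{R}^N : (y - x_0)\cdot \nu < 0\}$ together with the model barrier $\tilde d(y) = (-(y - x_0)\cdot \nu)_+$. The classical identity $(-\Delta)^s \tilde d^{\,s} \equiv 0$ in $H$, together with the equality $d_\eta(x) = \tilde d(x) = \delta$ (so that the constant term in the principal value cancels), yields
\begin{equation*}
	(-\Delta)^s d_\eta^s(x) = C_{N,s}\, P.V.\!\int_{\mathbb{R}^N} \frac{\tilde d(y)^s - d_\eta(y)^s}{|x-y|^{N+2s}}\,dy.
\end{equation*}

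The remaining task is to bound this integral by a constant independent of $\eta$. The natural split uses the ball $B_{\rho_0/2}(x_0)$. In the outer region both $\tilde d^{\,s}$ and $d_\eta^s$ are controlled by constants depending only on $\mathrm{diam}(\Omega)$, and the kernel is uniformly integrable because $|x-y| \geq \rho_0/4$ when $\delta < \rho_0/4$ (larger $\delta$ is handled by a direct $L^\infty$ bound and the compactness of the region $\eta < d(x) < 2\eta_0$). In the inner region, the uniform $C^2$ regularity of $\partial \Omega^\eta$ provides the quadratic contact estimate $|\tilde d(y) - d_\eta(y)| \leq C|y - x_0|^2$, with $C$ depending only on the uniform geometric data. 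Combining this with the fact that $\tilde d^{\,s}$ and $d_\eta^s$ are $C^{1,1}$ off their zero sets with comparable gradients, the near-boundary integral reduces, after a local straightening of $\partial\Omega^\eta$ with uniformly bounded Jacobian, to a one-dimensional integral that converges with a uniform bound. The main obstacle is exactly this near-boundary estimate: the crude inequality $|\tilde d^{\,s} - d_\eta^s| \leq C|y-x_0|^{2s}$ is insufficient against the kernel singularity at $y = x$ (which lies at distance $\delta$ from $x_0$), and the sharper comparison requires the same delicate analysis as in \cite{ros2014dirichlet} and \cite{davila2016continuous}. Carrying these estimates through, one checks that every constant involved depends only on $N$, $s$, $\kappa^*$, $\rho_0$, and $\mathrm{diam}(\Omega)$, hence only on $N$, $s$, and $\Omega$, yielding the required bound $C_3$.
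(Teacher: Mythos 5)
Your overall plan is reasonable and you correctly identify that the uniformity in $\eta$ must come from the uniform geometric control of $\partial\Omega^\eta$: bounded principal curvatures via Proposition \ref{parallelCurv} and the uniform exterior sphere radius $\rho_0$ from Remark \ref{uniUniExtBall}. The reduction to the integral
$(-\Delta)^s d_\eta^s(x) = C_{N,s}\, P.V.\!\int \frac{\tilde d(y)^s - d_\eta(y)^s}{|x-y|^{N+2s}}\,dy$
via subtraction of the $s$-harmonic half-space barrier is also correct. However, the proof has a genuine gap at precisely the step you yourself flag as ``the main obstacle'': the near-boundary integral. You observe that the crude bound $|\tilde d^s - d_\eta^s|\leq C|y-x_0|^{2s}$ fails against the kernel singularity at $y=x$ (which sits at distance $\delta=d_\eta(x)$ from $x_0$, and $\delta$ can be arbitrarily small), and then simply assert that ``carrying these estimates through'' yields a uniform bound. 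But this kernel estimate, with uniform control as $\delta\to 0^+$, \emph{is} the entire analytic content of the lemma; redoing it from scratch with the half-space comparison amounts to reproving Lemma 3.9 of \cite{ros2014dirichlet}, and none of that work is present. Nothing you have written shows why the $\delta$-dependent constants do not blow up.

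The paper's route is structurally different and avoids this issue: near any $y\in\partial\Omega^\eta$ it builds a $C^2$ diffeomorphism $\Psi^\eta$ straightening the boundary, so that $d_\eta(\Psi^\eta(X))=X_N$ and $d_\eta^s = ((\Psi^\eta)^{-1}(\cdot)\cdot e_N)_+^s$. The estimate $\|f_\eta\|_\infty \leq C(\|D\Psi^\eta\|_\infty,\|(D\Psi^\eta)^{-1}\|_\infty,\rho)\|D^2\Psi^\eta\|_\infty$ is then the (explicit-constant) form of Lemma 3.9 in \cite{ros2014dirichlet} (cf.\ also Theorem 3.6 in \cite{iannizzotto2016global}); all the singular-kernel work is packaged there as a black box. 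What remains is to show $\|\Psi^\eta\|_{C^2}$ and $\|(\Psi^\eta)^{-1}\|_{C^1}$ are bounded uniformly in $\eta$, which the paper does by computing $D\Psi^\eta$ and $D^2\Psi^\eta$ in a principal coordinate system in terms of the curvatures $\kappa_i^\eta$ (equations \eqref{Dpsix}--\eqref{D2d}) and invoking Proposition \ref{parallelCurv}. To repair your argument you should either carry out the near-boundary kernel estimate in full detail with explicit $\delta$-uniformity, or, as the paper does, adopt the change-of-variables formulation so that the kernel analysis can be cited and the only new verification is the uniform $C^2$ bound on the diffeomorphism.
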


For a fixed domain, this is contained in Lemma 3.9 in \cite{ros2014dirichlet}. For completeness, we go into the details of the proof of this result to show that the right-hand side of \eqref{uniBarrierEq} is uniformly bounded. To this end, we also use elements from the proof of the corresponding result for the (more general) case of the fractional $p$-Laplacian in \cite{iannizzotto2016global} (Theorem 3.6). Additionally, we employ Proposition \ref{parallelCurv}.

\begin{proof}[Proof of Lemma \ref{uniBarrier}]
	Fix $\eta>0$ and $y\in\partial\Omega^\eta$. Through a covering argument it suffices to obtain \eqref{uniBarrierEq} in a set $(\Omega^\eta\backslash\overline{\Omega}\,\!^{2\eta_0}) \cap B_{\rho}(y)$ for some $\rho>0$. Since $\partial\Omega^\eta$ is $C^2$ for small enough $\eta>0$, there exists a change of variables $\Psi^\eta\in C^2(\mathbb{R}^N,\mathbb{R}^N), \Psi^\eta(X)=x$, such that $\Psi^\eta=I$ in $\mathbb{R}^N\backslash B_{\rho}(y)$ and
	\begin{equation}\label{XN}
		d_\eta(x) = d_\eta(\Psi^\eta(X)) = X_N, \quad \textrm{for all } x\in (\Omega^\eta\backslash\overline{\Omega}\,\!^{2\eta_0}) \cap B_{\rho}(y).
	\end{equation}
	From this change of variables we define $v_\eta(x):=((\Psi^\eta)^{-1}(x) \cdot e_N)_+^s = d_\eta(x)^s$ and obtain \eqref{uniBarrierEq}, with
	\begin{equation}\label{rhsEta}
		\|f_\eta\|_\infty\leq C(\|D\Psi^\eta\|_\infty, \|(D\Psi^\eta)^{-1}\|_\infty, \rho)\|D^2\Psi^\eta\|_\infty.
	\end{equation}
	
	By further rotation and translation we can assume $y = \Psi^\eta(0)$,
	\begin{equation}\label{Xi}
		\Psi^\eta_i(X) = X_i, \quad\textrm{for } i=1, \ldots, N-1,
	\end{equation}
	and set up a principal coordinate system at $y\in \partial\Omega^\eta$ (see, e.g., \cite{gilbarg2015elliptic}, Sec. 14.6). In these coordinates, we have (using the notation of Lemma \ref{parallelCurv})
	\begin{equation}\label{Dpsix}
		D\Psi^\eta(x) = \mathrm{diag}[1-d_\eta(x)\kappa_1^\eta, \ldots, 1-d_\eta(x)\kappa_{N-1}^\eta, 1]
	\end{equation}
	for $x = y + d_\eta(x)\nu^\eta(y)$. This implies that
	\begin{equation}\label{DPsi}
		\|D\Psi^\eta\|_\infty, \|D(\Psi^\eta)^{-1}\|_\infty \leq C(\kappa_1^\eta,\ldots,\kappa_{N-1}^\eta).
	\end{equation}	
	From \eqref{Xi}, we have $\frac{\partial^2\Psi_k^\eta}{\partial X_i\partial X_j} = 0$ for $k\neq N$; $i,j=1,\ldots, N$. Implicit differentiation of \eqref{XN} yields
	\begin{align}\label{D2Psi}
		&\frac{\partial^2d}{\partial x_j\partial x_N}(\Psi^\eta(X))\frac{\partial\Psi^\eta_N}{\partial X_i} + \frac{\partial^2d}{\partial x_N^2}(\Psi^\eta(X))\frac{\partial\Psi^\eta_N}{\partial X_j}\frac{\partial\Psi^\eta_N}{\partial X_i} + \frac{\partial d}{\partial x_n}(\Psi^\eta(X))\frac{\partial^2\Psi^\eta_N}{\partial X_j\partial X_i} = 0,\nonumber\\[2pt]
		&\hskip 0.562\textwidth\textrm{for } i=1,\ldots, N;\ j=1,\ldots, N-1;\nonumber\\
		&\frac{\partial^2d}{\partial x_N^2}(\Psi^\eta(X))\frac{\partial\Psi^\eta_N}{\partial X_N}\frac{\partial\Psi^\eta_N}{\partial X_i} + \frac{\partial d}{\partial x_n}(\Psi^\eta(X))\frac{\partial^2\Psi^\eta_N}{\partial X_N^2} = 0,\quad \textrm{for } i=1,\ldots, N.
	\end{align}
	We also know, for $x$ as in \eqref{Dpsix}, that
	\begin{equation}\label{D2d}
		D^2d (x) = \mathrm{diag}\left[\frac{\kappa_1^\eta}{1 - d_\eta(x)\kappa_1^\eta}, \ldots, \frac{\kappa_{N-1}^\eta}{1 - d_\eta(x)\kappa_{N-1}^\eta}, 1\right].
	\end{equation} 
	Since $\frac{\partial d}{\partial x_n}(y) = \frac{\partial d}{\partial x_n}(\Psi^\eta(0)) = 1$, by continuity we have that 
	\begin{equation*}
		\frac{\partial d}{\partial x_n}(x) = \frac{\partial d}{\partial x_n}(\Psi^\eta(X)) \neq 0 \quad\textrm{for all } x\in B_\rho(y). 
	\end{equation*}
	This, together with \eqref{DPsi}, \eqref{D2Psi} and \eqref{D2d}, implies that
	\begin{equation*}
		\|D^2\Psi^\eta\|_\infty \leq C(\kappa_1^\eta,\ldots,\kappa_{N-1}^\eta).
	\end{equation*}
	Combining the above estimates with \eqref{rhsEta} and Proposition \ref{parallelCurv}, we conclude that $\|f_\eta\|_\infty$ is uniformly bounded by a constant that depends only on $N,s$, and $\Omega$;  more specifically, on $N, s,$ and the the principal curvatures of $\partial\Omega$.
\end{proof}

The following computation is adapted from \cite{del2017hopf}.

\begin{lemma}[Uniform Hopf's Lemma]\label{uniHopf}
	Let $\eta_0$ be as in Remark \ref{choiceEta0}. Then, there exists a positive constant $c_3$, depending only on $N, s,$ and $\Omega$, such that for all ${\eta\in (0,\eta_0)}$, the solution of \eqref{fracLapEigenval} satisfies
	\begin{equation*}
	\varphi_1^\eta(x) \geq c_3 d_\eta(x)^s \quad\textrm{for all } x\in\Omega,\ \eta\leq d(x) < \eta_0. 
	\end{equation*}
\end{lemma}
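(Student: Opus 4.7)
The plan is to use $h_{c_3}(x):=c_3\,d_\eta(x)^s$ as a barrier and compare it with $\varphi_1^\eta$ via a minimum-principle argument for $(-\Delta)^s$, exploiting the uniform bound on $(-\Delta)^s d_\eta^s$ from Lemma \ref{uniBarrier} and the uniform interior lower bound on $\varphi_1^\eta$ from Remark \ref{uniLowBoundVarphi}. Set $R_\eta := \{x\in\Omega : \eta<d(x)<\eta_0\}$ and $K:=\overline{\Omega^{\eta_0}}$ (a compact subset of $\Omega$), and define $w(x):=\varphi_1^\eta(x)-c_3\,d_\eta(x)^s$ on $\mathbb{R}^N$. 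The goal is to show $w\geq 0$ on all of $\mathbb{R}^N$ for a suitably small $c_3>0$ which depends only on $N$, $s$ and $\Omega$.

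The first step is to check non-negativity of $w$ outside $R_\eta$. On $\mathbb{R}^N\setminus\Omega^\eta$ both $\varphi_1^\eta$ and $d_\eta^s$ vanish (by definition and the extension convention), so $w\equiv 0$ there. On $K$, Remark \ref{uniLowBoundVarphi} gives a constant $c_K>0$, independent of $\eta\in(0,\eta_0)$, with $\varphi_1^\eta\geq c_K$ on $K$; since $d_\eta\leq\operatorname{diam}(\Omega)$, choosing $c_3\leq c_K/\bigl(2\operatorname{diam}(\Omega)^s\bigr)$ yields $w\geq c_K/2>0$ on $K$.

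The second step is a contradiction argument. Suppose $m:=\min_{\mathbb{R}^N}w<0$; by the previous step the minimum is attained at some $x_0\in R_\eta$. Since $\varphi_1^\eta$ is classical in the interior of $\Omega^\eta$ (by the bootstrapping mentioned after \eqref{normaliz}) and $d_\eta^s$ satisfies \eqref{uniBarrierEq} pointwise on $R_\eta$, $w$ is regular enough for $(-\Delta)^sw(x_0)$ to be defined classically and to satisfy
\begin{equation*}
(-\Delta)^sw(x_0)=\lambda_1^\eta\,\varphi_1^\eta(x_0)-c_3 f_\eta(x_0)\geq -c_3 C_3,
\end{equation*}
using $\varphi_1^\eta\geq 0$ and $\|f_\eta\|_\infty\leq C_3$. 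On the other hand, writing
\begin{equation*}
(-\Delta)^sw(x_0)=\frac{C_{N,s}}{2}\int_{\mathbb{R}^N}\frac{2w(x_0)-w(x_0+z)-w(x_0-z)}{|z|^{N+2s}}\,dz,
\end{equation*}
the integrand is $\leq 0$ everywhere (by the minimum property) and, restricting to $y\in K$, one has $w(x_0)-w(y)\leq m-c_K/2\leq -c_K/2$ together with $|x_0-y|\leq\operatorname{diam}(\Omega)$. Keeping only the contribution from $K$ yields
\begin{equation*}
(-\Delta)^sw(x_0)\leq -C_{N,s}\,\frac{c_K\,|\Omega^{\eta_0}|}{2\operatorname{diam}(\Omega)^{N+2s}}=:-C_4<0.
\end{equation*}

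Combining the two bounds gives $c_3\geq C_4/C_3$, so fixing $c_3<\min\{c_K/(2\operatorname{diam}(\Omega)^s),\,C_4/C_3\}$ produces the desired contradiction, whence $w\geq 0$ on $\mathbb{R}^N$ and in particular the claimed estimate holds on $\{x:\eta\leq d(x)<\eta_0\}$. The main obstacle is ensuring that every constant appearing in this argument ($c_K$, $C_3$, $|\Omega^{\eta_0}|$, $\operatorname{diam}(\Omega)$) is independent of $\eta$: for $C_3$ this is exactly the content of Lemma \ref{uniBarrier}, for $c_K$ it is Remark \ref{uniLowBoundVarphi}, and the geometric quantities are controlled by $\Omega$ and $\eta_0$ alone, so the resulting $c_3$ indeed depends only on $N$, $s$, and $\Omega$.
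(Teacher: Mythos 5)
Your proof is correct and relies on the same two uniform ingredients as the paper's (Lemma \ref{uniBarrier} for the bound on $(-\Delta)^s d_\eta^s$ and Remark \ref{uniLowBoundVarphi} for the interior lower bound on $\varphi_1^\eta$), but it packages them differently. The paper builds an explicit supersolution $v=d_\eta^s+A\chi_{K_0}$ in the annulus $\Omega\cap\{\eta<d<\eta_0\}$: the indicator term supplies a large negative nonlocal contribution forcing $(-\Delta)^s v\leq 0$, and then comparison between $c_3 v$ and $\varphi_1^\eta$, with complement data controlled by $\inf_{K_0}\varphi_1^\eta$, gives the result. You instead argue directly at a hypothetical negative minimum $x_0$ of $w=\varphi_1^\eta-c_3 d_\eta^s$: the tail of the integral over $K$ plays exactly the role of the paper's $A\chi_{K_0}$ term and yields an upper bound $(-\Delta)^s w(x_0)\leq -C_4<0$ with $C_4$ independent of $\eta$ and $c_3$, while the equation and \eqref{uniBarrierEq} give $(-\Delta)^s w(x_0)\geq -c_3 C_3$; this forces $c_3\geq C_4/C_3$ and hence a contradiction for small $c_3$. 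Your route is slightly more self-contained, since $w$ is continuous and you never need a comparison principle for USC supersolutions, at the modest price of verifying that $(-\Delta)^s w$ is classically evaluable at $x_0$ — which is fine because $x_0$ lies in the open set $R_\eta$, where $d_\eta>0$, $d_\eta$ is $C^2$ (as $\eta_0$ is chosen so that $d$ is $C^2$ near $\partial\Omega$), and $\varphi_1^\eta$ is regular by interior bootstrapping. Both arguments give $c_3$ depending only on $N$, $s$, $\Omega$ through the same tracked constants.
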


\begin{proof}
	Write $K_0=\overline{\Omega}\,\!^{\eta_0}$ for short and define, for $A>0$,
		\begin{equation*}
			v(x) = d_\eta(x)^s + A\chi_{K_0}(x),
		\end{equation*}
	where $\chi_{K_0}$ denotes the characteristic function of $K_0$. Note that $v\in USC(\mathbb{R}^N)$ and $v\in C(\mathbb{R}^N\backslash K_0)$. From Lemma \ref{uniBarrier}, for $x\in\Omega\cap\{\eta<d(x)<\eta_0\}$ we have
		\begin{equation*}
			(-\Delta)^s v(x) = f_\eta(x) + A\,C_{N,s}\,P.V.\int_{\mathbb{R}^N} \frac{-\chi_{K_0}(y)}{|x-y|^{N+2s}}\,dy \leq C_3 - A C(K_0),
		\end{equation*}
	where $C_3$ is the constant from Lemma \ref{uniBarrier} and $C(K_0)>0$. Hence, for sufficiently large $A$, $(-\Delta)^s v(x)\leq 0$.
	
	As a consequence of Theorem \ref{stability}, $\varphi_1^\eta$ is bounded by below on compact sets, uniformly in $\eta$ (see Remark \ref{uniLowBoundVarphi}). We therefore take $c_3>0$ small enough so that
		\begin{equation}
			c_3(\mathrm{diam}(\Omega)^s + A) \leq \inf_{K_0} \varphi_1^\eta.
		\end{equation} 
	Then, for all $\eta\in(0,\eta_0)$,
		\begin{equation}
			(-\Delta)^s (c_3 v) \leq 0 \leq \lambda_1^\eta\varphi_1^\eta \quad\textrm{in } \Omega\cap\{\eta<d(x)<\eta_0\},
		\end{equation}
	and
		\begin{equation} 
		c_3 v = c_3(d_\eta^s + A\chi_{K_0})\leq c_3(\mathrm{diam}(\Omega)^s + A) \leq \inf_{K_0} \varphi_1^\eta \leq \varphi_1^\eta \quad\textrm{in }\overline{\Omega}\,\!^{\eta_0}.
		\end{equation}
	Since $v \equiv \varphi_1^\eta \equiv 0$ in $\mathbb{R}^n\backslash\Omega^\eta$, by comparison we have that
		\begin{equation*}
			c_3v\leq \varphi_1^\eta\quad\textrm{ in }\Omega\cap \{\eta\leq d(x)<\eta_0\}.
		\end{equation*}
	We conclude by observing that $v\equiv d_\eta(\cdot)^s$ in $\mathbb{R}^N\backslash\overline{\Omega}\,\!^{\eta_0}$.
\end{proof}

\begin{lemma}\label{finiteIntLemma}
	Let $\eta_0$ be as in Remark \ref{choiceEta0}. Then, there exists a positive constant $C_4$, depending only on $\Omega, N, s,$ and $p$, such that, for all $\eta\in(0,\eta_0)$,
	\begin{equation*}
		\int_{\Omega^\eta} (\varphi_1^\eta)^{-\frac{1}{p-1}} \,dx \leq C_4.
	\end{equation*}
\end{lemma}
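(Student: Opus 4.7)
The plan is to split $\Omega^\eta$ into a boundary layer where Lemma \ref{uniHopf} controls $\varphi_1^\eta$ from below and an interior core where $\varphi_1^\eta$ is bounded away from zero uniformly in $\eta$, then to estimate each contribution separately. Concretely, I would (after shrinking $\eta_0$ if necessary, which is harmless) fix $r_0 \in (0,\eta_0)$ small and independent of $\eta$, so that on $A_\eta := \{x\in\Omega^\eta : d_\eta(x)\leq r_0\}$ one has $d(x) = d_\eta(x) + \eta < \eta_0$ and a tubular-neighborhood parametrization of $\partial\Omega^\eta$ via the inward unit normal is valid. Such a choice is legitimate because Proposition \ref{parallelCurv}, together with Remarks \ref{choiceEta0} and \ref{uniUniExtBall}, provides uniform $C^2$ control and a uniform exterior sphere condition for the family $\{\partial\Omega^\eta\}$.

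On the interior set $B_\eta := \{x\in\Omega^\eta : d_\eta(x) > r_0\}$ we have $B_\eta \subset K := \{x\in\Omega : d(x)\geq r_0\}$, a fixed compact subset of $\Omega$. Remark \ref{uniLowBoundVarphi} then supplies a constant $c>0$ independent of $\eta$ with $\varphi_1^\eta \geq c$ on $K$, whence $\int_{B_\eta} (\varphi_1^\eta)^{-1/(p-1)}\,dx \leq c^{-1/(p-1)}|\Omega|$. On the boundary layer $A_\eta$, Lemma \ref{uniHopf} yields $\varphi_1^\eta(x) \geq c_3\, d_\eta(x)^s$, and consequently
\[
	\int_{A_\eta} (\varphi_1^\eta)^{-1/(p-1)}\,dx \leq c_3^{-1/(p-1)} \int_{A_\eta} d_\eta(x)^{-\frac{s}{p-1}}\,dx.
\]
Parametrizing $A_\eta$ as $(r,\sigma)\in(0,r_0)\times\partial\Omega^\eta$ via the inward normal gives $dx = J_\eta(r,\sigma)\,dr\,d\sigma$ with $J_\eta$ uniformly bounded in $\eta$ (by Proposition \ref{parallelCurv}), and $|\partial\Omega^\eta|$ is likewise uniformly bounded, so the right-hand side is controlled by a universal multiple of $\int_0^{r_0} r^{-s/(p-1)}\,dr$.

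The crucial algebraic point, and where \eqref{pands} enters decisively, is that $p > s+1$ is equivalent to $s/(p-1) < 1$, which is precisely the integrability threshold for a negative power of the distance over a $C^2$ boundary layer; thus $\int_0^{r_0} r^{-s/(p-1)}\,dr < \infty$. Combining the two estimates gives the desired uniform bound $C_4$. The main obstacle is not any single step but rather the bookkeeping needed to keep every constant independent of $\eta$; this is exactly the purpose of the preparatory material in Section \ref{techSec} (uniform $C^2$ control, uniform exterior sphere condition, uniform lower bound for $\varphi_1^\eta$ on compact sets, and the uniform Hopf lemma). Absent the condition $p > s+1$, the boundary-layer integral would diverge and the argument would break down, which explains why this hypothesis is flagged in the introduction as the key ingredient for Theorem \ref{nonExtThm}.
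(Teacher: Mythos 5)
Your proof is correct and follows essentially the same route as the paper: split $\Omega^\eta$ into a boundary strip and a fixed interior core, control the strip via the uniform Hopf lemma and the core via Remark \ref{uniLowBoundVarphi}, and invoke $p>s+1$ to make $\int_0^{r_0} r^{-s/(p-1)}\,dr$ finite. The paper phrases the split as $\Omega^\eta\setminus\Omega^{\eta_0}$ versus $\Omega^{\eta_0}$ and uses the flattening change of variables $\Psi^\eta$ from Lemma \ref{uniBarrier} in place of your tubular-neighborhood parametrization, but these are cosmetic differences (and you correctly attribute the pointwise lower bound to Lemma \ref{uniHopf}, where the paper's text has a small citation slip).
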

\begin{proof}
	By Lemma \ref{uniBarrier}, we have, for $\eta_0$ and $c_3$ as before,
	\begin{equation*}
		\varphi_1^\eta(x)^{-\frac{1}{p-1}} \leq (c_3 d_\eta(x)^s)^{-\frac{1}{p-1}} \leq Cd_\eta(x)^{-\frac{s}{p-1}} \quad\textrm{for all } x\in\Omega^\eta\backslash\Omega^{\eta_0}.
	\end{equation*}
	Using the change of variables $\Psi^\eta\in C^2(\mathbb{R}^N, \mathbb{R}^N)$ introduced in Lemma \ref{uniBarrier}, together with a covering argument and the estimates provided therein, we have
	\begin{equation*}
		\int_{\Omega^\eta\backslash\Omega^{\eta_0}} d_\eta^{-\frac{s}{p-1}} \,dx = C\int_{\eta}^{\eta_0 - \eta} X_N^{-\frac{s}{p-1}} dX_N \leq C\int_{0}^{\eta_0} X_N^{-\frac{s}{p-1}} \,dX_N.
	\end{equation*}
	By assumption \eqref{pands}, $-\frac{s}{p-1}>-1$, hence this last integral is finite.
	
	On the other hand, using that $\Omega^{\eta_0}\!\!\subset\subset\!\Omega$, 
	together with Lemma \ref{stability} and Remark \ref{uniLowBoundVarphi}, we have $\varphi_1^\eta(x)>c_4>0$ for all $x\in\Omega^{\eta_0}$, where $c_4$ depends only on $\Omega,\ N$ and $s$ (through $\eta_0$). Hence, 
	\begin{equation*}
		\int_{\Omega^\eta} (\varphi_1^\eta)^{-\frac{1}{p-1}} \,dx \leq C\int_{0}^{\eta_0} X_N^{-\frac{s}{p-1}} \,dX_N + \int_{\Omega^{\eta_0}} c_4^{-\frac{1}{p-1}} \,dx =: C_4.
	\end{equation*}
\end{proof}

\section{Nonexistence of global solutions and LOBC}\label{nonExtSec}

	As in our previous work \cite{quaas2018loss}, the proof of Theorem \ref{nonExtThm} uses key ideas from that of Theorem 2.1 in \cite{souplet2002gradient}. For completeness, we reproduce some of the elements of \cite{quaas2018loss}. We remark that some care is required in choosing certain parameters appearing in our argument in the correct order, a difficulty which is not present in \cite{souplet2002gradient}. Specifically, we first choose $u_0$ large in an appropriate sense, then take $\eta$ (which depends on $u_0$ and the regularization parameters of Sec.~\ref{techSec}) sufficiently small.
	
	\begin{proof}[Proof of Theorem \ref{nonExtThm}]
	Consider the differential inequality
		\begin{equation}\left\{
			\begin{array}{l}
				\dot{y}(t) \geq Cy(t)^p, \quad 0<t_0<t<t_1, \label{blowupODE} \\ 
				y(t_0) = M_0,
			\end{array}\right.
		\end{equation}
	where $C,\,M_0>0$. We can integrate \eqref{blowupODE} explicitly to obtain
		\begin{equation*}
			0\leq y(t)^{1-p} \leq C(1-p)(t-t_0) + M_0^{1-p}.
		\end{equation*}
	Hence, $y(t)^{1-p} \rightarrow 0$ as $t \rightarrow t_0 + \frac{M_0^{1-p}}{C(p-1)}$. Since $1-p<0$, this implies $y(t) \rightarrow +\infty$. Alternatively, for a fixed $t_1>t_0$, blow-up occurs for $t<t_1$ provided we have 
		\begin{equation}\label{choiceM0}
			M_0 > \left[C(p-1)(t_1-t_0)\right]^{-\frac{1}{p-1}}.
		\end{equation}
	
	So fix $T>0$ and assume that the viscosity solution $u$ of \eqref{mainEq} satisfies \eqref{boundarydata} in the classical sense. We will later specify the largeness condition on $u_0$ in terms of $M_0$ above, but may consider it set from now on, since it depends only on constants already available. The constant $C$ in \eqref{blowupODE} is also specified later, depending only on the appropriate quantities. In particular, it is independent of both $\eta$ and $u_0$.

	Recall the approximate equation obtained in Proposition \ref{secAppIneq},
		\begin{equation*}
			w_t + (-\Delta)^s w - |Dw|^p \geq 0 \quad \textrm{a.e.~in } \Omega^\eta \times (t_0, t_1),
		\end{equation*}
	where now $w$ is obtained by regularization of the viscosity solution $u$ of \eqref{mainEq}. Define $z(t) = \int_{\Omega^\eta} w(x,t) \varphi_1^\eta(x) \,dx,$ where $\varphi_1^\eta$ is the unique positive solution of \eqref{fracLapEigenval} normalized so that $\|\varphi_1^\eta\|_\infty=1$. From Proposition \ref{supInfConvProperties} \eqref{unifConverApp} and Remark \ref{uBounded}, we obtain, for sufficiently small $\eta$, that $\|w\|_\infty\leq \|u\|_\infty + 1 \leq \|u_0\|_\infty + 1$. Thus $z(t)$ is uniformly bounded for $0\leq t \leq T$. In what remains of the proof, we show that $z$ satisfies \eqref{blowupODE} by using the assumption that the solution $u$ satisfies \eqref{boundarydata} in the classical sense, and hence blows-up, a contradiction.
	
	From \eqref{AppIneq}, we compute
		\begin{align*}
			\dot{z}(t) &{}= \int_{\Omega^\eta} w_t(x,t) \varphi_1^\eta(x) dx \geq \int_{\Omega^\eta} \left[-(-\Delta)^s w(x,t) + |Dw(x,t)|^p\right]\varphi_1^\eta(x) \,dx\\
			&{}= -\int_{\Omega^\eta} (-\Delta)^s w(x,t) \varphi_1^\eta(x) \,dx + \int_{\Omega^\eta} |Dw(x,t)|^p \varphi_1^\eta(x) \,dx =: I_1 + I_2.
		\end{align*} 
	We proceed to ``integrate by parts''in $I_1$, taking some care to handle the $P.V.$ in the definition of the fractional Laplacian.
	
	Since $\varphi_1^\eta \equiv 0$ in $\mathbb{R}^N\backslash 
	\Omega^\eta$ (point-wise), we have
		\begin{align*}
			I_1 &{}= -\int_{\Omega^\eta} (-\Delta)^s w(x,t) \varphi_1^\eta(x) \,dx = -\int_{\mathbb{R}^N} (-\Delta)^s w(x,t) \varphi_1^\eta(x) \,dx \\
			&{}= -C_{N,s}\int_{\mathbb{R}^N} \lim_{\omega\to 0} \int_{\mathbb{R}^N\backslash B_\omega(x)} \frac{w(x,t) - w(y,t)}{|x-y|^{N+2s}} \varphi_1^\eta(x) \,dy\,dx.
		\end{align*}
	Since $w(\cdot, t)\in C^{1,1}(\mathbb{R}^N)$ for all $t\in [t_0,t_1]$, $(-\Delta)^sw(\cdot, t)$ is classically defined a.e.~in $\Omega$, precisely at the points where $w$ has a second order expansion. Moreover, at such points we have that the integral with respect to $y$ converges as $\omega\to 0$ and, by the standard computation,
		\begin{align*}
			\left|\int_{\mathbb{R}^N\backslash B_\omega(x)} \frac{w(x,t) - w(y,t)}{|x-y|^{N+2s}} \varphi_1^\eta(x) \,dy\, \right| \leq C_{N,s}\|D^2w\|_\infty\varphi_1^\eta(x).
		\end{align*}
	Hence by the dominated convergence theorem,
		\begin{equation*}
			I_1 = -C_{N,s}\lim_{\omega\to 0} \int_{\mathbb{R}^N} \int_{\mathbb{R}^N\backslash B_\omega(x)} \frac{w(x,t) - w(y,t)}{|x-y|^{N+2s}} \varphi_1^\eta(x) \,dy\,dx.
		\end{equation*}
	We note that the integrand is no longer singular, and write $dV$ for the measure on $\mathbb{R}^{2N}$. Applying first Fubini's theorem, and then by symmetry (i.e., interchanging $x$ and $y$), we have
		\begin{align*}
			I_1 &{}= -C_{N,s}\lim_{\omega\to 0} \int_{\{|x-y|\geq \omega\}} \frac{w(x,t) - w(y,t)}{|x-y|^{N+2s}} \varphi_1^\eta(x) \,dV\\
			&{}= -C_{N,s}\lim_{\omega\to 0} \int_{\{|x-y|\geq \omega\}} \frac{w(x,t) - w(y,t)}{|x-y|^{N+2s}} (-\varphi_1^\eta(y)) \,dV\\
			&{}= -\frac{C_{N,s}}{2}\lim_{\omega\to 0} \int_{\{|x-y|\geq \omega\}} \frac{(w(x,t) - w(y,t))(\varphi_1^\eta(x) - \varphi_1^\eta(y))}{|x-y|^{N+2s}}\,dV.
		\end{align*}
	Starting from the last integral, we repeat the above computation ``in reverse'' to pass the operator onto $\varphi_1^\eta$ and use the associated eigenvalue problem \eqref{fracLapEigenval}. Since now $w\equiv 0$ in $\mathbb{R}^N\backslash\Omega$, this gives
		\begin{align*}
			I_1 &{}= -\frac{C_{N,s}}{2}\lim_{\omega\to 0} \int_{\{|x-y|\geq \omega\}} \frac{(w(x,t) - w(y,t))(\varphi_1^\eta(x) - \varphi_1^\eta(y))}{|x-y|^{N+2s}}\,dV\\
			&{}= -\int_{\mathbb{R}^N} w(x,t) (-\Delta)^s \varphi_1^\eta(x) \,dx = -\int_{\Omega} w(x,t) (-\Delta)^s \varphi_1^\eta(x) \,dx\\
			&{}= -\int_{\Omega^\eta} w(x,t) (-\Delta)^s \varphi_1^\eta(x) \,dx -\int_{\Omega\backslash\Omega^\eta} w(x,t) (-\Delta)^s \varphi_1^\eta(x) \,dx\\
			&{}= -\lambda_1^\eta z(t) - \int_{\Omega\backslash\Omega^\eta} w(x,t) (-\Delta)^s \varphi_1^\eta(x) \,dx.
		\end{align*}
	Using that $w,\,\varphi_1^\eta\geq 0$, and again that $\varphi_1^\eta \equiv 0$ in $\mathbb{R}^N\backslash 
	\Omega^\eta$,
		\begin{align*}
			& \int_{\Omega\backslash\Omega^\eta} w(x,t) (-\Delta)^s \varphi_1^\eta(x) \,dx = - \int_{\Omega\backslash\Omega^\eta} w(x,t) \int_{\mathbb{R}^N}\frac{\varphi_1^\eta(x) - \varphi_1^\eta(y)}{|x-y|^s} \,dy\,dx\\
			&\quad = \int_{\Omega\backslash\Omega^\eta} w(x,t) \int_{\Omega^\eta}\frac{ - \varphi_1^\eta(y)}{|x-y|^s} \,dy\,dx
			= -\int_{\Omega\backslash\Omega^\eta}\int_{\Omega^\eta} \frac{w(x,t)\,\varphi_1^\eta(y)}{|x-y|^s} \,dy\,dx \leq 0.
		\end{align*}
	Therefore, 
		\begin{equation}\label{I1}
		I_1\geq -\lambda_1^\eta z(t).
		\end{equation}
	
	We turn to estimating $I_2$. First, applying H\"older's inequality and Lemma \ref{finiteIntLemma},
		\begin{align}\label{holder}
			&\int_{\Omega^\eta} |Dw| \,dx = \int_{\Omega^\eta} |Dw| (\varphi_1^\eta)^\frac{1}{p}(\varphi_1^\eta)^{-\frac{1}{p}} \,dx\nonumber\\
			&\quad \leq \left(\int_{\Omega^\eta} |Dw|^p \,\varphi_1^\eta \,dx\right)^\frac{1}{p}\,\left(\int_{\Omega^\eta} (\varphi_1^\eta)^{-\frac{1}{p-1}} \,dx\right)^\frac{p-1}{p} \leq C_4^{\frac{p-1}{p}}\,I_2^\frac{1}{p},
		\end{align}
	where $C_4$ is the constant from Lemma \ref{finiteIntLemma}.
	
	We apply the following version of Poincaré's inequality to $w(\cdot,t)\in C^{1,1}(\Omega^\eta)$: 
		\begin{equation}\label{poincareIneq}
			\int_{\Omega^\eta} |w(\cdot,t)| \,dx \leq C(\Omega)\left(\sup_{\partial\Omega^\eta} |w(\cdot,t)| + \int_{\Omega^\eta} |Dw(\cdot,t)| \,dx\right),
		\end{equation}
	where the constant appearing on the right-hand side can be given in terms of the diameter of the domain, and can therefore be taken uniformly with respect to $\eta$.
	
	From the uniform convergence of the approximation $w\to u$ (Proposition \ref{supInfConvProperties} \eqref{unifConverApp}) and the uniform continuity of $u$ in $\overline{\Omega}\times[0,T]$, we have that $\sup_{\partial\Omega^\eta} |w(\cdot,t)|\to 0$ as $\eta\to 0$, uniformly in $t$. Indeed, let $\nu > 0$. The assumption that $u$ satisfies \eqref{boundarydata} point-wise implies in particular that $u(\cdot,t)=0$ in $\partial\Omega$ for all $t\geq t_0$. Therefore, for any $x\in\partial\Omega^\eta$, if $x_0\in\partial\Omega$ is such that $d(x,x_0)=d(x,\partial\Omega)$, we have
		\begin{align}\label{supWonDeta}
			w(x,t) ={}& w(x,t) - u(x_0,t) \leq |w(x,t) - u(x,t)| \nonumber\\
			&{}\ + |u(x,t) - u(x_0,t)| < 2\nu.
		\end{align}
	Hence, we later write simply $\sup_{\partial\Omega^\eta} |w(\cdot,t)|=o(1)$ as $\eta\to 0$. Together with the normalization $\|\varphi_1^\eta\|_\infty = 1$ and the elementary inequality $(a+b)\leq 2^{p-1}(a^p + b^p)$, \eqref{holder}, \eqref{poincareIneq} and \eqref{supWonDeta} imply
		\begin{align*}
			|z(t)|^p &{}\leq \left|\int_{\Omega^\eta} w(x,t)\varphi_1^\eta(x)\,dx\right|^p \leq \left(\|\varphi_1^\eta\|_\infty \int_{\Omega^\eta} |w(x,t)| \,dx \right)^p\\
			&{}\leq C(\Omega)\left(\sup_{\partial\Omega^\eta} |w| + \int_{\Omega^\eta} |Dw| \,dx\right)^p \leq o(1) + C\left(\int_{\Omega^\eta} |Dw| \,dx\right)^p\\
			&{}\leq o(1) + CI_2.
		\end{align*}
	Recalling \eqref{holder} and \eqref{I1}, we obtain
		\begin{equation}\label{zDot}
			\dot{z}(t) \geq -\lambda_1^\eta z(t) + C_5 z(t)^p + o(1),
		\end{equation}
	where the $C_5>0$ does not depend on either $\eta$ or $u_0$; this is crucial for the next step.

	We can reduce \eqref{zDot} to \eqref{blowupODE} as follows. We use that $\varphi_1^\eta \to \varphi_1$ uniformly over $\overline{\Omega}$ as $\eta\to 0$; $w \to u$ uniformly over $\overline{\Omega}\times[0,T]$ as $\eta\to 0$ and $t_0,\,T-t_1 \to 0$ (see Proposition \ref{supInfConvProperties}\eqref{unifConverApp} and Proposition \ref{secAppIneq}); that $u(\cdot, t_0)\to u_0$ as $t_0\to 0$, and the fact that all these functions are uniformly bounded in $\eta$, to conclude
		\begin{align*}
			z(t_0) ={}& \int_{\Omega^\eta} w(x,t_0)\, \varphi_1^\eta(x) \,dx = \int_{\Omega} w(x,t_0)\, \varphi_1^\eta(x) \,dx + \int_{\Omega\backslash\Omega^\eta} w(x,t_0)\, \varphi_1^\eta(x) \,dx\\
			={}&\int_{\Omega} w(x,t_0)\, \varphi_1^\eta(x) \,dx + o(1) = \int_{\Omega} u_0(x)\,\varphi_1(x) \,dx + o(1) \quad\textrm{as }\eta\to 0.
		\end{align*}
	Thus we see that the largeness condition for $u_0$ given in Theorem \ref{nonExtThm} implies that $z(t_0)$ is large as well. On the other hand, assumption \eqref{pands} implies in particular that $p>1$, hence $z(t)^p$ is the dominating term in the right-hand side of \eqref{zDot}. Therefore, taking
		\begin{equation*}
			z(t_0) \geq \max\left\{M_0, \left(\frac{2\lambda_1}{C_5}\right)^\frac{1}{p-1}\right\} + 1,
		\end{equation*}
	and $\eta$ sufficiently small ensures that both $\dot{z}(t)\geq \frac{C_5}{2}z(t)^p$ for $t>t_0$ and that $z(t_0)\geq M_0$, which together are equivalent to \eqref{blowupODE}. This gives the desired contradiction. \end{proof}

	\begin{remark}\label{nonExtRmk1}
		Given the indirect nature of the preceding proof, we would like to highlight the role played by the main assumptions leading to LOBC: the fact that the gradient term is ``dominating'' in the equation, i.e., $p>s+1$, from \eqref{pands}, is used only in Lemma \ref{finiteIntLemma}. On the other hand, the assumption that leads to contradiction, that \eqref{boundarydata} is satisfied in the classical sense, is used only in \eqref{supWonDeta} and in applying Poincaré's inequality.
		
		The case of more general boundary conditions can be treated in exactly the same way as above. Assuming $u=g$ in $\mathbb{R}^N\backslash\Omega\times (0,T)$ is satisfied in the classical sense, with $g\in C_b(\mathbb{R}^N\backslash\Omega\times(0,T))$, we obtain
			\begin{equation*}
				\dot{z}(t) \geq -\lambda_1^\eta z(t) + Cz(t)^p - C_6,
			\end{equation*}
		instead of \eqref{zDot}, where $C_6$ depends on $\|g\|_{L^\infty(\partial\Omega\times(0,T))}$. From here on, the proof continues as above.
	\end{remark}
	
	\begin{remark}
		Assuming higher regularity for the initial data, e.g., $u_0\in C^2(\overline{\Omega})$ it is possible to obtain estimates for $u_t$ (see, e.g., \cite{tabet2010large}, Proposition 4.1, for an example of this method in the local setting). This allows the application of regularity results available for stationary problems (e.g, those of \cite{barles2016existence}) to our problem, essentially by treating $u_t$ as a bounded ``right-hand side''. Global H\"older estimates can then be obtained for the solution of \eqref{mainEq}-\eqref{boundarydata}-\eqref{initialdata}. In this case, Theorems \eqref{localExtThm} and \eqref{nonExtThm} together imply that for any $\beta>\beta^*=\frac{p-2s}{p-1}$, the H\"older semi-norm of the solution $u$ \emph{blows-up in finite time}, i.e., there exists $T_1>0$, depending only on $N, \Omega, s, p$ and $u_0$, such that
			\begin{equation*}
				\lim_{t\to T_1} [u(\cdot,t)]_{\beta,\overline{\Omega}} = \infty.
			\end{equation*}
		This situation is analogous to that of \emph{gradient blow-up} for \eqref{vhj}.
	\end{remark}

\appendix
\section{Uniform $C^s$ regularity for the approximate domains}\label{CsReg}
	
	In this Appendix we state a version of results from \cite{ros2014dirichlet} which concern the regularity of solutions to the Dirichlet problem for the fractional Laplacian. We revisit the corresponding proofs to show that the estimates are uniform with respect to varying domains such as those appearing in \eqref{fracLapEigenval}. In this way we conclude the analysis postponed in the proof of Theorem \ref{globalReg}.

	Let $s\in(0,1)$, $g\in L^\infty(\Omega^\eta)$ and consider
		\begin{equation}\label{fracDirichlet}
			\left\{\begin{array}{ll}
				(-\Delta)^s u = g & \textrm{in } \Omega^\eta\\
				u = 0 & \textrm{in } \mathbb{R}^N \backslash\Omega^\eta,
			\end{array}\right.
		\end{equation}
	where $\Omega$ is a bounded, $C^2$ domain, $\Omega^\eta = \{x\in\Omega\,| \, d(x) >\eta \}$, $\eta\in(0,\eta_0)$ and $\eta_0$ is given by \ref{choiceEta0}.
	
	\begin{proposition}\label{globalDirReg}
		Let $u$ be a solution of \eqref{fracDirichlet}. Then $u\in C^s(\mathbb{R}^N)$ and
			\begin{equation*}
				\|u\|_{C^s(\mathbb{R}^N)} \leq C\|g\|_{L^\infty(\Omega^\eta)},
			\end{equation*}
		where $C$ is a constant depending only on $\Omega$ and $s$. In particular, the constant $C$ can be taken uniformly for $\eta\in(0,\eta_0)$.
	\end{proposition}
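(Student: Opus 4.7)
The plan is to follow the proof of Proposition 1.1 in \cite{ros2014dirichlet}, which proceeds in two main stages (a boundary barrier estimate and an interior-to-boundary scaling argument), and to track precisely where domain-dependent quantities enter, verifying each is controlled uniformly for $\eta\in(0,\eta_0)$.

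First, I would establish a sharp decay rate: there exists $C=C(\Omega,s)$ such that
\begin{equation*}
|u(x)|\leq C\|g\|_{L^\infty(\Omega^\eta)}\,d_\eta(x)^s\quad\textrm{for all }x\in\Omega^\eta.
\end{equation*}
This is obtained by constructing an explicit barrier at each boundary point $y_0\in\partial\Omega^\eta$. Using the exterior tangent ball $B_{\rho_0}(y_1)$ provided by Remark \ref{uniUniExtBall}, the function $\psi(x)=(|x-y_1|^2-\rho_0^2)_+^{s}$ satisfies $(-\Delta)^s\psi\leq -c(\rho_0,s)$ in an annular neighborhood of $y_0$, by an explicit computation as in Lemma 2.6 of \cite{ros2014dirichlet}. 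The comparison principle for $(-\Delta)^s$ then yields the $d_\eta^s$-decay with a constant that depends only on $\rho_0$, $s$, $\|g\|_\infty$, and $\|u\|_\infty$; the latter is itself controlled by $C(\Omega,s)\|g\|_\infty$ by the maximum principle, since $\Omega^\eta\subset\Omega$. The crucial point is that the exterior ball radius $\rho_0$ is uniform in $\eta$, by Proposition \ref{parallelCurv} and Remark \ref{uniUniExtBall}.

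Second, I would combine this boundary decay estimate with standard interior regularity for $(-\Delta)^s$ via a scaling argument to obtain the global $C^s$ estimate. For $x_0\in\Omega^\eta$, set $r=d_\eta(x_0)/2$ and consider the rescaled function $\tilde u(y)=u(x_0+ry)/r^s$ on $B_1$. Then $(-\Delta)^s\tilde u(y)=r^{2s-s}g(x_0+ry)=r^s g$ is bounded, and $\|\tilde u\|_{L^\infty(\mathbb{R}^N)}\leq C\|g\|_\infty$ (thanks to the preceding step and the scaling of the boundary estimate). The interior $C^s$ estimate for $(-\Delta)^s$ from \cite{caffarelli2009regularity}, which is universal and does not see the domain, gives $[\tilde u]_{C^s(B_{1/2})}\leq C\|g\|_\infty$. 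Scaling back, $[u]_{C^s(B_{r/2}(x_0))}\leq C\|g\|_\infty$ with $C$ independent of $x_0$ and $\eta$. A standard elementary argument (see Corollary 2.4 in \cite{ros2014dirichlet}) converts this family of interior estimates, together with the boundary decay $|u(x)|\leq C\|g\|_\infty d_\eta(x)^s$, into the desired global $C^s$ bound.

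The main obstacle is verifying that no constant in the above chain depends on $\eta$ in a hidden way. The only domain-dependent inputs are the exterior ball radius $\rho_0$ (uniform by Proposition \ref{parallelCurv} and Remark \ref{uniUniExtBall}), the $L^\infty$ bound $\|u\|_\infty\leq C\|g\|_\infty|\Omega^\eta|^{2s/N}$ which is dominated by the analogous constant for $\Omega$, and the diameter of $\Omega^\eta$, again bounded by $\mathrm{diam}(\Omega)$. The interior estimate is translation- and domain-invariant. Assembling these ingredients with the uniform barrier argument therefore yields the conclusion with $C=C(\Omega,s)$ independent of $\eta\in(0,\eta_0)$, thereby completing the uniform-constant verification required in Theorem \ref{globalReg}.
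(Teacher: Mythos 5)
Your route---boundary $d_\eta^s$-decay from a uniform exterior-ball barrier, interior estimates by rescaling, then patching via a two-case split---is exactly the Ros-Oton--Serra chain that the paper reproduces (Lemmas \ref{SupersolnLemma}, \ref{roslemma2.7}, \ref{rosclaim2.8}, \ref{roslemma2.9} and the final two-case argument), and you correctly single out the $\eta$-dependent inputs (exterior ball radius $\rho_0$, $\mathrm{diam}(\Omega^\eta)$, $\|u\|_\infty$) and why each is uniformly controlled.

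There is, however, one genuinely false step in your rescaling argument. The rescaled function $\tilde u(y)=u(x_0+ry)/r^s$, with $r=d_\eta(x_0)/2$, does \emph{not} satisfy $\|\tilde u\|_{L^\infty(\mathbb{R}^N)}\leq C\|g\|_\infty$. From $\|u\|_{L^\infty}\leq C\|g\|_\infty$ you get only $\|\tilde u\|_{L^\infty(\mathbb{R}^N)}\leq C\|g\|_\infty/r^s$, and even using the boundary decay $|u|\leq C\|g\|_\infty d_\eta^s$ the best global $L^\infty$ bound for $\tilde u$ is of order $\|g\|_\infty(\mathrm{diam}(\Omega)/r)^s$, which blows up as $r\to0$. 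What the interior estimate actually requires (see Lemma \ref{1stIntHolderLemma}) is a bound on the \emph{weighted tail} $\|(1+|y|)^{-N-2s}\tilde u\|_{L^1(\mathbb{R}^N)}$ rather than the global sup. It is this quantity that the $d_\eta^s$-decay controls uniformly in $x_0$ and $\eta$: indeed $|\tilde u(y)|\leq C\|g\|_\infty(2+|y|)^s$, and $(1+|y|)^{s-N-2s}=(1+|y|)^{-N-s}$ is integrable. This is precisely the content of Lemma 2.9 in \cite{ros2014dirichlet} (Lemma \ref{roslemma2.9} here) and is the place where the sharp $d_\eta^s$-decay is essential. As written, your $L^\infty$ claim is wrong and would not support the conclusion; replacing it with the weighted-tail bound closes the gap. (A minor sign check: the barrier $\psi$ should satisfy $(-\Delta)^s\psi\geq c>0$, not $\leq-c$, so that $C\|g\|_\infty\psi$ is a supersolution in the annular region.)
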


This is Proposition 1.1 from \cite{ros2014dirichlet}, save for the dependency of $C$ on the parameter $\eta$, which we require to be uniform. To this end, we outline the manner in which this result was obtained. We begin stating the key Lemmas leading up to it.
	
	\begin{lemma}\label{1stIntHolderLemma}
		Assume that $w\in C^\infty(\mathbb{R}^N)$ is a solution of $(-\Delta)^s w = h$ in $B_2$. Then, for every $\beta\in(0,2s)$,
			\begin{equation*}
				\|w\|_{C^\beta(\overline{B_\frac{1}{2}})} \leq C \left(\|(1+|x|^{-N-2s}w(x)\|_{L^1(\mathbb{R}^N)} + \|w\|_{L^\infty(B_2} + \|h\|_{\infty(B_2)} \right),
			\end{equation*}
		where the constant $C$ depends only on $N, s$ and $\beta$.
	\end{lemma}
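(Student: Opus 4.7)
The plan is to localize the problem via a cutoff decomposition, separating the purely non-local contribution coming from the exterior tail of $w$, which can be estimated directly by the weighted $L^1$ norm, from the local contribution, which is controlled by standard interior regularity results for the fractional Laplacian with bounded right-hand side. Concretely, I would fix a cutoff $\chi\in C_c^\infty(\mathbb{R}^N)$ with $\chi\equiv 1$ on $B_{3/2}$ and $\mathrm{supp}\,\chi\subset B_2$, and write $w=w_1+w_2$ with $w_1=w\chi$ and $w_2=w(1-\chi)$.

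For the tail $w_2$, note that $w_2\equiv 0$ on $B_{3/2}$, so for $x\in B_{1/2}$ the fractional Laplacian is a non-singular integral,
\begin{equation*}
	(-\Delta)^s w_2(x) = -C_{N,s}\int_{\mathbb{R}^N\setminus B_{3/2}} \frac{w(y)(1-\chi(y))}{|x-y|^{N+2s}}\,dy.
\end{equation*}
Since $|x-y|\geq c(1+|y|)$ for $x\in B_{1/2}$, $y\notin B_{3/2}$, the kernel is bounded by $C(1+|y|)^{-N-2s}$, and differentiating under the integral sign shows that $(-\Delta)^s w_2\in C^\infty(B_1)$ with every derivative controlled by $\|(1+|y|)^{-N-2s}w\|_{L^1(\mathbb{R}^N)}$. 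In particular $h_2:=(-\Delta)^s w_2$ is bounded on $B_1$ by this weighted $L^1$ norm, and $w_2$ itself is smooth on $B_1$ with an analogous bound.

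For the local part, observe that $w_1$ is compactly supported in $B_2$ and satisfies $(-\Delta)^s w_1 = h-h_2$ in $B_{3/2}$, with right-hand side bounded by $\|h\|_{L^\infty(B_2)}+C\|(1+|y|)^{-N-2s}w\|_{L^1}$. Moreover $\|w_1\|_{L^\infty(\mathbb{R}^N)}\leq\|w\|_{L^\infty(B_2)}$. Applying the interior $C^\beta$ estimate for the Dirichlet/entire fractional Laplacian with bounded source (Silvestre's interior regularity, or equivalently the Riesz-potential estimate on $B_1$; see e.g.\ Proposition~2.1.8 in Silvestre's thesis or \cite{caffarelli2009regularity}), one obtains, for every $\beta\in(0,2s)$,
\begin{equation*}
	\|w_1\|_{C^\beta(\overline{B_{1/2}})}\leq C\bigl(\|w_1\|_{L^\infty(\mathbb{R}^N)} + \|(-\Delta)^s w_1\|_{L^\infty(B_1)}\bigr),
\end{equation*}
with $C=C(N,s,\beta)$. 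Adding the already-established $C^\beta$ bound for $w_2$ on $\overline{B_{1/2}}$ and combining yields the claimed estimate.

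The main obstacle is the uniformity of the constant in $\beta$ as $\beta\uparrow 2s$: the natural regularity threshold of $(-\Delta)^s$ with bounded source is exactly $2s$ (with logarithmic loss at the endpoint), so the interior Hölder estimate invoked for $w_1$ degenerates at $\beta=2s$. This is precisely why the statement restricts to $\beta\in(0,2s)$ strictly, and is not a true obstacle for the lemma as stated; it simply dictates the allowed range and the dependence of $C$ on $\beta$. All other steps are routine: the tail estimate is a direct consequence of the decay of the kernel away from the singularity, and the only nontrivial input is the standard interior Hölder estimate, which is taken as given from the cited references.
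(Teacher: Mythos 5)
Your cutoff decomposition $w = w\chi + w(1-\chi)$ is correct and is in fact how Ros-Oton and Serra derive their Corollary 2.5 (from the bounded-function estimate, their Corollary 2.4); the paper itself proves nothing here and simply cites that reference. Two small remarks: since $w_2 = w(1-\chi)$ vanishes identically on $B_{3/2}\supset \overline{B_{1/2}}$, the ``$C^\beta$ bound for $w_2$ on $\overline{B_{1/2}}$'' you add at the end is trivially zero (the only contribution of $w_2$ is through $h_2=(-\Delta)^s w_2$ entering the right-hand side for $w_1$); and the degeneration of the interior estimate as $\beta\uparrow 2s$ that you flag is genuine but unproblematic, since the lemma explicitly allows $C$ to depend on $\beta$.
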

	
	\begin{proof}
		This is Corollary 2.5 in \cite{ros2014dirichlet}.
	\end{proof}
	
	\begin{lemma}\label{SupersolnLemma}
		There exists a positive constant $C$ and a radial continuous function $\phi_1\in H^s_{\textit{loc}}(\mathbb{R}^N)$ satisfying
			\begin{equation}\left\{
				\begin{array}{ll}
					(-\Delta)^s\phi_1 \geq 1 & \textrm{in } B_4\backslash B_1\\
					\phi_1\equiv 0 & \textrm{in } B_1\\
					0\leq \phi_1 \leq C (|x|-1)^s & \textrm{in } B_4\backslash B_1\\
					1 \leq \phi_1 \leq C &\textrm{in } \mathbb{R}^N\backslash B_4.
				\end{array}\right.
			\end{equation}
	\end{lemma}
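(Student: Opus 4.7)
The idea is to produce $\phi_1$ as the unique solution of a tailored fractional Dirichlet problem on the annulus, namely
\begin{equation*}
    \begin{cases}
        (-\Delta)^s \phi_1 = 2 & \textrm{in } B_4\setminus\overline{B_1},\\
        \phi_1 = 0 & \textrm{in } \overline{B_1},\\
        \phi_1 = 1 & \textrm{in } \mathbb{R}^N\setminus B_4.
    \end{cases}
\end{equation*}
Existence, uniqueness, and $H^s_{\text{loc}}(\mathbb{R}^N)$ regularity follow from the standard variational theory for $(-\Delta)^s$ with nonhomogeneous exterior data, applied after splitting $\phi_1 = v + w$, where $v$ is the $s$-harmonic extension of the (smoothed) exterior data and $w$ solves the problem with homogeneous exterior condition and right-hand side $2$. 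Radiality is inherited from the uniqueness of the solution together with the rotational symmetry of all the data, and global continuity of $\phi_1$ on $\mathbb{R}^N$ follows from the $C^s$ regularity recalled in Proposition \ref{globalDirReg}, applied separately to $v$ and to $w$.

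With $\phi_1$ so constructed, three of the four listed conditions are immediate: $(-\Delta)^s \phi_1 \equiv 2 \geq 1$ in the annulus, and $\phi_1 \equiv 0$ in $B_1$, $\phi_1 \equiv 1 \geq 1$ in $\mathbb{R}^N\setminus B_4$, are built into the construction. Nonnegativity in the annulus is obtained from the weak maximum principle by comparing with the zero subsolution (using that the inhomogeneity is positive and the exterior data is nonnegative), while the uniform upper bound $\phi_1 \leq C$ on $\mathbb{R}^N$ follows again from the maximum principle, comparing against a large enough constant supersolution. The remaining statement, $\phi_1(x) \leq C(|x|-1)^s$ in $B_4\setminus B_1$, is the boundary H\"older estimate of \cite{ros2014dirichlet}: since $\partial B_1$ is $C^\infty$ and the right-hand side of the equation is bounded, one has $\phi_1(x) \leq C\,\mathrm{dist}(x,\partial B_1)^s$, which is exactly $C(|x|-1)^s$ in the annulus.

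The main technical obstacle is precisely this last boundary H\"older bound. Weak-solution theory alone only delivers $\phi_1 \in H^s_{\text{loc}}$ and, via Proposition \ref{globalDirReg}, a global $C^s$ modulus of continuity; neither statement by itself produces the sharp rate of vanishing of order $d^s$ at the boundary. That sharper rate is exactly the content of the fine boundary regularity of \cite{ros2014dirichlet}, whose hypotheses (inhomogeneity in $L^\infty$ and $C^{1,1}$ boundary) are satisfied here, with constants depending only on $N$ and $s$. Assembling the four checks above then yields the lemma.
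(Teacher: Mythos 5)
The paper itself does not reprove this lemma; its entire stated proof is ``This is Lemma 2.6 in \cite{ros2014dirichlet}.'' The pertinent comparison is therefore with the proof of that lemma in \cite{ros2014dirichlet}, which is constructive: an explicit radial profile (built from the Getoor/Dyda formula $(-\Delta)^s(1-|x|^2)_+^s=\Lambda_{N,s}$ in $B_1$ and elementary manipulations) is written down and the four listed properties are verified directly, so the growth rate $\phi_1\leq C(|x|-1)^s$ is read off from the formula rather than derived from any regularity theorem.

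Your proposal instead defines $\phi_1$ as the solution of an auxiliary fractional Dirichlet problem and obtains the key bound $\phi_1(x)\leq C(|x|-1)^s$ from ``the boundary H\"older estimate of \cite{ros2014dirichlet}.'' This is circular. That estimate is Lemma~2.7 of \cite{ros2014dirichlet} (restated here as Lemma~\ref{roslemma2.7}), whose proof consists precisely in rescaling the supersolution of Lemma~\ref{SupersolnLemma} to an annulus around an exterior tangent ball and applying comparison; the whole point of Lemma~\ref{SupersolnLemma} is to be the barrier that launches the boundary regularity theory, so it cannot be deduced from that theory. A secondary gap: Lemma~\ref{roslemma2.7} and Proposition~\ref{globalDirReg} are stated for \emph{homogeneous} exterior data, whereas your $\phi_1$ equals $1$ outside $B_4$, so neither applies directly; and after the splitting $\phi_1=v+w$ you propose, the $s$-harmonic piece $v$ still needs the vanishing rate $d^s$ at $\partial B_1$, which once again calls for a barrier. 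The remaining items in your check (existence and radiality via uniqueness, $H^s_{\mathrm{loc}}$ regularity, nonnegativity and boundedness via the maximum principle) are sound; the gap is confined to the growth bound, which must be established constructively rather than by invoking the downstream regularity it is meant to underpin.
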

	
	\begin{proof}
		This is Lemma 2.6 in \cite{ros2014dirichlet}.
	\end{proof}
	
	\begin{remark}
		Lemmas \ref{1stIntHolderLemma} and \ref{SupersolnLemma} have no dependence on the domains $\Omega$ nor $\Omega^\eta$. Therefore, they apply directly to our setting.
	\end{remark}
	
	\begin{lemma}\label{roslemma2.7}
		Let $\Omega^\eta$ and $g$ be as above, and let $u$ be the solution of \ref{fracDirichlet}. Then
			\begin{equation*}
				|u(x)| \leq C\|g\|_{L^\infty(\Omega^\eta)}d_\eta(x)^s \quad \textrm{for all } x\in\Omega^\eta,
			\end{equation*}
		where $C$ depends only on $\Omega$ and $s$. In particular, $C$ can be taken uniformly in $\eta\in(0,\eta_0)$.
	\end{lemma}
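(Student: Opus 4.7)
The plan is to build a boundary barrier for \eqref{fracDirichlet} out of the supersolution $\phi_1$ from Lemma \ref{SupersolnLemma}, crucially exploiting the uniform exterior sphere condition of Remark \ref{uniUniExtBall} so that the constants never depend on $\eta$. Fix $\eta\in(0,\eta_0)$, let $x_0\in\Omega^\eta$ with $d_\eta(x_0)<\rho_0$, pick a nearest point $y\in\partial\Omega^\eta$, and let $z\in\mathbb{R}^N\setminus\overline{\Omega^\eta}$ be the center of an exterior tangent ball with $\overline{B_{\rho_0}(z)}\cap\overline{\Omega^\eta}=\{y\}$, with $\rho_0$ independent of $\eta$.

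Define the rescaled barrier
\begin{equation*}
    w(x) := A\,\rho_0^{2s}\,\|g\|_{L^\infty(\Omega^\eta)}\,\phi_1\!\left(\frac{x-z}{\rho_0}\right),
\end{equation*}
where $A>0$ is a constant to be chosen. Using the scaling identity $(-\Delta)^s[\phi(\cdot/\rho)](x)=\rho^{-2s}[(-\Delta)^s\phi](x/\rho)$ together with $(-\Delta)^s\phi_1\geq 1$ in $B_4\setminus B_1$ (Lemma \ref{SupersolnLemma}), I obtain $(-\Delta)^s w \geq A\,\|g\|_{L^\infty(\Omega^\eta)}$ in the annulus $B_{4\rho_0}(z)\setminus\overline{B_{\rho_0}(z)}$. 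Choose $A$ large (depending only on $N,s$ and the constant $C$ of Lemma \ref{SupersolnLemma}) so that $A\geq 1$ and, simultaneously, the outer value $w\geq \|g\|_{L^\infty(\Omega^\eta)}\cdot\|u\|_{L^\infty}/\|g\|_{L^\infty}$ on $\mathbb{R}^N\setminus B_{4\rho_0}(z)$, using the a priori $L^\infty$ bound $\|u\|_{L^\infty(\mathbb{R}^N)}\leq C\|g\|_{L^\infty(\Omega^\eta)}$ (itself obtained by comparison with a constant supersolution, uniformly in $\eta$ by the uniform boundedness of $\mathrm{diam}(\Omega^\eta)$).

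With this choice, $w$ is a pointwise supersolution of $(-\Delta)^sw\geq g$ in $\Omega^\eta\cap (B_{4\rho_0}(z)\setminus\overline{B_{\rho_0}(z)})$, and $w\geq u$ on the complement of that set (on $\mathbb{R}^N\setminus\Omega^\eta\supset B_{\rho_0}(z)$ we have $u\equiv 0\leq w$; on $\mathbb{R}^N\setminus B_{4\rho_0}(z)$ we arranged $w\geq \|u\|_\infty\geq u$). The comparison principle for \eqref{fracDirichlet} (valid since $w$ is lower-semicontinuous and a classical supersolution in the annular region) then gives $u\leq w$ throughout $\Omega^\eta$. Invoking the bound $\phi_1(\xi)\leq C(|\xi|-1)^s$ in $B_4\setminus B_1$, and noting that $|x_0-z|-\rho_0\leq d_\eta(x_0)$ (the segment from $x_0$ to $z$ passes through $y$), I conclude
\begin{equation*}
    u(x_0)\leq w(x_0)\leq A\rho_0^s\,C\,\|g\|_{L^\infty(\Omega^\eta)}\,d_\eta(x_0)^s.
\end{equation*}
The same argument applied to $-u$ (which solves \eqref{fracDirichlet} with right-hand side $-g$) yields the matching lower estimate; hence $|u(x_0)|\leq C\|g\|_{L^\infty(\Omega^\eta)}d_\eta(x_0)^s$ for $x_0$ within distance $\rho_0$ of $\partial\Omega^\eta$. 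For $x_0$ with $d_\eta(x_0)\geq \rho_0$, the inequality is immediate from the uniform $L^\infty$ bound and the fact that $d_\eta(x_0)^s\geq \rho_0^s$.

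The main obstacle to ensuring uniformity in $\eta$ is precisely the construction of the exterior tangent ball at arbitrary $y\in\partial\Omega^\eta$ with a radius $\rho_0$ that does not shrink as $\eta$ varies; this is exactly what Remark \ref{uniUniExtBall}, via the curvature formula of Proposition \ref{parallelCurv}, provides, and what allows every constant in the above scaling to be taken independent of $\eta\in(0,\eta_0)$.
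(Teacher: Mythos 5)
Your proposal is correct and follows essentially the same route as the paper's proof: scale the barrier $\phi_1$ of Lemma \ref{SupersolnLemma} to an exterior tangent ball of the uniform radius $\rho_0$ supplied by Remark \ref{uniUniExtBall}, compare, and fall back on the uniform $L^\infty$ bound (the content of Lemma \ref{rosclaim2.8}) for interior points. Your parenthetical ``by comparison with a constant supersolution'' is shorthand for comparison with a constant multiple of $\chi_{\Omega^\eta}$, which is precisely how Lemma \ref{rosclaim2.8} is proved, so the remark is accurate in spirit.
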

		
		Lemma \ref{roslemma2.7} relies on the following result:
	
	\begin{lemma}\label{rosclaim2.8}
		Let $\Omega$ be a bounded domain and let $g\in L^\infty(\Omega^\eta)$. Let $u$ be the solution of \eqref{fracDirichlet}. Then
			\begin{equation*}
				\|u\|_{L^\infty(\mathbb{R}^N)} \leq C(\mathrm{diam} (\Omega^\eta) )^{2s}\|g\|_{L^\infty(\Omega^\eta)},
			\end{equation*}
		where $C$ is a constant depending only on $N$ and $s$.
	\end{lemma}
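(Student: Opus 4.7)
The plan is to compare $u$ against an explicit radial barrier built from the fractional torsion function on a ball enclosing $\Omega^\eta$. Fix any $x_0\in\Omega^\eta$ and set $R=\mathrm{diam}(\Omega^\eta)$, so that $\Omega^\eta\subset B_R(x_0)$. The key ingredient is the classical identity
\begin{equation*}
(-\Delta)^s\bigl((1-|x|^2)_+^s\bigr) = c_{N,s}\quad\text{in } B_1,
\end{equation*}
where $c_{N,s}>0$ depends only on $N$ and $s$; this can be computed directly via the Fourier symbol or the radial representation of $(-\Delta)^s$, and is standard in the literature. A scaling computation, using $(-\Delta)^s[\phi(\cdot/R)](x)=R^{-2s}(-\Delta)^s\phi(x/R)$, then yields
\begin{equation*}
(-\Delta)^s\bigl((R^2-|x-x_0|^2)_+^s\bigr) = c_{N,s}\quad\text{in } B_R(x_0).
\end{equation*}

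Define the barrier $v(x) := c_{N,s}^{-1}\|g\|_{L^\infty(\Omega^\eta)}\,(R^2-|x-x_0|^2)_+^s$. By construction $v\equiv 0$ on $\mathbb{R}^N\setminus B_R(x_0)$, so in particular $v\geq 0 = u$ on $\mathbb{R}^N\setminus\Omega^\eta$, while $(-\Delta)^s v = \|g\|_{L^\infty(\Omega^\eta)}\geq g$ pointwise in $\Omega^\eta\subset B_R(x_0)$. Applying the standard comparison principle for the fractional Laplacian on $\Omega^\eta$ (available either at the level of viscosity solutions or in the weak formulation used in \cite{ros2014dirichlet}) yields $u\leq v$ in $\Omega^\eta$. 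The symmetric argument with $-v$ in place of $v$ gives $u\geq -v$, so
\begin{equation*}
\|u\|_{L^\infty(\mathbb{R}^N)} = \|u\|_{L^\infty(\Omega^\eta)} \leq \|v\|_{L^\infty(\mathbb{R}^N)} \leq c_{N,s}^{-1}\,R^{2s}\,\|g\|_{L^\infty(\Omega^\eta)},
\end{equation*}
which is the claim with $C = c_{N,s}^{-1}$ depending only on $N$ and $s$.

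The only delicate point is invoking the comparison principle in a regime where $g$ is merely $L^\infty$, but this is standard for \eqref{fracDirichlet} since the barrier $v$ is itself $C^s(\mathbb{R}^N)$, smooth in $B_R(x_0)$, and the comparison can be carried out in the weak sense of \cite{servadei2013variational} or the viscosity sense of \cite{caffarelli2009regularity}. It is crucial to observe that the dependence of $C$ on $\Omega^\eta$ enters only through $\mathrm{diam}(\Omega^\eta)$; in particular, the estimate does not feel the shape of $\Omega^\eta$ or the parameter $\eta$, which is exactly the uniformity needed for the application in the proof of Lemma \ref{roslemma2.7} and ultimately Proposition \ref{globalDirReg}.
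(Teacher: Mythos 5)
Your argument is correct and reproduces the content of the result the paper cites (Claim 2.8 of Ros-Oton--Serra) rather than inventing a new route: the paper itself gives no proof beyond the citation and the observation that $\mathrm{diam}(\Omega^\eta)\leq\mathrm{diam}(\Omega)$ yields uniformity in $\eta$, and your barrier $c_{N,s}^{-1}\|g\|_\infty\,(R^2-|x-x_0|^2)_+^s$ built from the Getoor/fractional torsion identity is precisely the standard device behind that claim. In particular you correctly identify that the constant depends on the geometry only through the diameter, which is exactly the uniformity the paper needs; one small remark is that the comparison you invoke should be phrased in the weak ($H^s_0$) setting since that is the sense in which $u$ solves \eqref{fracDirichlet} here, and this is unproblematic because your barrier lies in $H^s(\mathbb{R}^N)$ and is smooth on $B_R(x_0)\supset\Omega^\eta$.
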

	
	\begin{proof}
		This is Claim 2.8 in \cite{ros2014dirichlet}. Since $\mathrm{diam}(\Omega^\eta)\leq\mathrm{diam}(\Omega)$, the estimate is uniform for $\eta\in(0,\eta_0)$.
	\end{proof}
	
	\begin{proof}[Proof of Lemma \ref{roslemma2.7}:] 
		This is Lemma 2.7 in \cite{ros2014dirichlet}. For points near $\partial\Omega^\eta$, the estimate is obtained by scaling the supersolution from Lemma \ref{SupersolnLemma} to the annular region $B_{2\rho_0}\backslash B_{\rho_0}$, where $B_{\rho_0}$ is an exterior tangent ball to $\partial\Omega$, and applying comparison. Owing to Remark \ref{uniUniExtBall}, the scaling can be done uniformly with respect to $\eta\in(0,\eta_0)$. For the remaining points in $\Omega^\eta$, Lemma \ref{rosclaim2.8} is employed.
	\end{proof}
	
	\begin{lemma}\label{roslemma2.9}
		Let $\Omega$ be a bounded domain satisfying the exterior ball condition, $g\in L^\infty(\Omega^\eta)$, and $u$ be the solution of \ref{fracDirichlet}. Then $u\in C^s(\Omega^\eta)$ and for all $x_0\in\Omega^\eta$, 
			\begin{equation}
				[u]_{C^s(\overline{B_{R}(x_0)})} \leq C\| g\|_{L^\infty(\Omega^\eta)},
			\end{equation}
		where $R=\frac{d(x_0)}{2}$ and $C$ depends only on $\Omega^\eta$, and $s$.
	\end{lemma}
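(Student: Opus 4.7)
Fix $x_0\in\Omega^\eta$ and set $R=d_\eta(x_0)/2$, so that $B_{2R}(x_0)\subset\Omega^\eta$. The plan is the classical localization-and-scaling argument from \cite{ros2014dirichlet}: rescale so that the interior Hölder estimate of Lemma \ref{1stIntHolderLemma} applies, and control the three resulting terms via the boundary decay of Lemma \ref{roslemma2.7}. Define $\tilde u(y)=u(x_0+Ry)$, which satisfies
$$(-\Delta)^s\tilde u(y)=R^{2s}\,g(x_0+Ry)=:\tilde g(y)\quad\textrm{in }B_2.$$
Applying Lemma \ref{1stIntHolderLemma} with $\beta=s$ gives
$$\|\tilde u\|_{C^s(\overline{B_{1/2}})}\leq C\Bigl(\|(1+|y|)^{-N-2s}\,\tilde u(y)\|_{L^1(\mathbb{R}^N)}+\|\tilde u\|_{L^\infty(B_2)}+\|\tilde g\|_{L^\infty(B_2)}\Bigr),$$
with $C$ depending only on $N$ and $s$.

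Each term will be dominated by $CR^s\|g\|_\infty$. Clearly $\|\tilde g\|_\infty\leq R^{2s}\|g\|_\infty$. For $y\in B_2$ one has $d_\eta(x_0+Ry)\leq d_\eta(x_0)+2R=4R$, so Lemma \ref{roslemma2.7} gives $|\tilde u(y)|\leq C\|g\|_\infty R^s$. For the $L^1$-tail, one uses the global pointwise bound $|u(x)|\leq C\|g\|_\infty d_\eta(x)^s\leq C\|g\|_\infty\mathrm{diam}(\Omega)^s$ (again from Lemma \ref{roslemma2.7}), which makes $\tilde u$ uniformly bounded in $\mathbb{R}^N$ and renders the weighted integral finite. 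Summing, $\|\tilde u\|_{C^s(\overline{B_{1/2}})}\leq CR^s\|g\|_\infty$, where $R\leq\mathrm{diam}(\Omega)$. Undoing the change of variables (the $C^s$ seminorm scales by $R^{-s}$) yields $[u]_{C^s(\overline{B_{R/2}(x_0)})}\leq C\|g\|_\infty$.

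To pass from $B_{R/2}(x_0)$ to the full ball $B_R(x_0)$ required in the statement, I would use a standard midpoint argument. For $y_1,y_2\in B_R(x_0)$ with $|y_1-y_2|<R/4$, the midpoint $z=(y_1+y_2)/2$ satisfies $d_\eta(z)\geq R$, so the previous local estimate at $z$ with radius parameter $R/2$ contains both points and controls $|u(y_1)-u(y_2)|$. For $|y_1-y_2|\geq R/4$, Lemma \ref{roslemma2.7} gives directly
$$|u(y_1)-u(y_2)|\leq 2\|u\|_{L^\infty(B_R(x_0))}\leq CR^s\|g\|_\infty\leq C|y_1-y_2|^s\|g\|_\infty.$$

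The uniformity in $\eta\in(0,\eta_0)$ required for Theorem \ref{globalReg} follows automatically: the constant in Lemma \ref{1stIntHolderLemma} depends only on $N,s,\beta$; the constant in Lemma \ref{roslemma2.7} has already been established uniformly; and the scaling factor is controlled via $R\leq\mathrm{diam}(\Omega^\eta)\leq\mathrm{diam}(\Omega)$. The only delicate step is the $L^1$-tail estimate, and this is precisely where the uniform boundary decay from Lemma \ref{roslemma2.7} enters in an essential way; once that is in hand, the rest of the argument is algebraic.
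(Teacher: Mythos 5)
Your proof follows the same route as the paper's (which itself simply defers to Lemma~2.9 of Ros-Oton and Serra with remarks on $\eta$-uniformity): rescale to a unit ball, invoke the interior H\"older estimate of Lemma~\ref{1stIntHolderLemma}, and control the three resulting terms via the boundary decay of Lemma~\ref{roslemma2.7}. Your estimates for $\|\tilde u\|_{L^\infty(B_2)}$ and $\|\tilde g\|_\infty$, the scaling identity $[u]_{C^s(B_{R/2}(x_0))}=R^{-s}[\tilde u]_{C^s(B_{1/2})}$, and the midpoint argument extending the bound from $B_{R/2}(x_0)$ to $B_R(x_0)$ are all correct. However, the $L^1$-tail estimate as written has a genuine gap. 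You bound $|u(x)|\leq C\|g\|_\infty d_\eta(x)^s\leq C\|g\|_\infty\mathrm{diam}(\Omega)^s$ and conclude the weighted integral is merely \emph{finite}; but that only gives $\int_{\mathbb{R}^N}(1+|y|)^{-N-2s}|\tilde u(y)|\,dy \leq C\|g\|_\infty$ \emph{without} the crucial factor $R^s$. Summing the three terms then gives $\|\tilde u\|_{C^s(\overline{B_{1/2}})}\leq C\|g\|_\infty$, not $CR^s\|g\|_\infty$ as you assert, and undoing the rescaling would yield $[u]_{C^s(\overline{B_{R/2}(x_0)})}\leq CR^{-s}\|g\|_\infty$, which blows up as $x_0\to\partial\Omega^\eta$.

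The fix is to retain the $d_\eta$-dependence rather than discard it: for $x_0+Ry\in\Omega^\eta$ one has $d_\eta(x_0+Ry)\leq d_\eta(x_0)+R|y|=2R+R|y|\leq 2R(1+|y|)$, and $u\equiv 0$ outside $\Omega^\eta$, so $|\tilde u(y)|\leq C\|g\|_\infty R^s(1+|y|)^s$ for all $y\in\mathbb{R}^N$. Then
\begin{equation*}
\int_{\mathbb{R}^N}\frac{|\tilde u(y)|}{(1+|y|)^{N+2s}}\,dy\leq C\|g\|_\infty R^s\int_{\mathbb{R}^N}\frac{dy}{(1+|y|)^{N+s}}=C\|g\|_\infty R^s,
\end{equation*}
since $N+s>N$, which supplies the missing $R^s$ and closes the argument. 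This refinement is exactly where Lemma~\ref{roslemma2.7} ``enters in an essential way,'' as you remarked, but it must appear in the estimate itself rather than being flattened into a diameter bound.
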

	
	\begin{proof}
		This is a special case of Lemma 2.9 in \cite{ros2014dirichlet}. Although more intricate than that of the previous results, the proof of this result uses only a scaling of the interior estimate of Lemma \ref{1stIntHolderLemma} to the ball $B_R(x_0)$, the use of the upper barrier for $\|u\|_{L^\infty(\Omega)}$ obtained in \ref{roslemma2.7}, and a covering argument. As such, the constant $C$ in Lemma \ref{roslemma2.9} now depends on the measure of $\Omega$ as well. This quantity, however, varies continuously for $\Omega^\eta$ with $\eta\in(0,\eta_0)$.
	\end{proof}
	
	\begin{proof}[Proof of Proposition \ref{globalDirReg}:]
		It remains only to extend the estimate from Lemma \ref{roslemma2.9} up to the boundary. For this we provide an argument from \cite{iannizzotto2016global}. Through a covering argument, Lemma \ref{roslemma2.9} extends to an interior bound on any compact subset of $\Omega^\eta$. Consider now $x, y \in \Omega^\eta$ such that $0\leq d_\eta(x),\, d_\eta(y) \leq \rho$ for a small $\rho>0$ and, without loss of generality, $d_\eta(y) \leq d_\eta(x)$. There are two possible cases:
			\begin{enumerate}[a)]
				\item $2|x-y|\leq d_\eta(x)$. This implies that $y\in B_{R}(x)$ with $R=\frac{d_\eta(x)}{2}$. Hence, applying Lemma \ref{roslemma2.9}, we obtain $|u(x) - u(y)| \leq C\|g\|_{L^\infty(\Omega^\eta)}|x-y|^s$.
				
				\item $2|x-y| > d_\eta(x) \geq d_\eta(y)$. In this case, we apply Lemma \ref{roslemma2.7} to compute
					\begin{equation*}
						u(x) - u(y) \leq  |u(x)| + |u(y)| \leq C (d_\eta(x)^s + d_\eta(y)^s) \leq C |x-y|^s.
					\end{equation*}
			\end{enumerate}
	\end{proof}

\noindent\textbf{Acknowledgments:} A.Q.~was partially supported by Fondecyt Grant No. 1151180, Programa Basal, CMM, U. de Chile and Millennium Nucleus Center for Analysis of PDE NC130017. A.R.~was partially supported by Conicyt, Beca Doctorado Nacional 2016.

\bibliographystyle{plain}
\bibliography{../../aerp.bib}

\vspace{5mm}
\noindent \textsc{Alexander Quaas}\\
\noindent \textit{Email:} alexander.quaas@usm.cl\\
\noindent \textsc{Andrei Rodríguez}\\
\noindent \textit{Email:} andrei.rodriguez.14@sansano.usm.cl\\[4pt]
\noindent \textsc{Departamento de Matemática, Universidad Técnica Federico Santa María, Casilla: V-110, Avda. España 1680, Valparaíso, Chile.}

\end{document}